\renewcommand{\epsilon}{\varepsilon}
\DeclareMathOperator{\codim}{codim}
\newcommand{\abs}[1]{\left| #1 \right|}
\newcommand{\rank}{{\rm rank\,}}
\renewcommand{\P}{\mathbb{P}}
\newcommand{\sotto}{B}
\newcommand{\rend}{{\mathrm{End}}(E)}
\newcommand{\PP}{\mathbb{P}}
\newcommand{\qe}{\mathbb{Q}}
\newcommand{\C}{\mathbb{C}}
\newcommand{\R}{\mathbb{R}}
\newcommand{\Z}{\mathbb{Z}}
\newcommand{\Q}{\mathbb{Q}}
\newcommand{\Ci}{\mathcal{C}}
\newcommand{\cquattro}{C}
\newcommand{\ccinque}{C}
\newcommand{\csei}{C_0}
\newcommand{\cotto}{C}
\newtheorem{thm}{Theorem}[section]
\newtheorem{propo}[thm]{Proposition}
\newtheorem{lem}[thm]{Lemma}
\newtheorem{cor}[thm]{Corollary}
\newtheorem{D}[thm]{Definition}
\title[]{An explicit Manin-Dem'janenko theorem in  elliptic curves}
\author{E. Viada $^1$}
\thanks{$^1$ Supported by the FNS Project PP00P2-123262/1}
\subjclass[2010]{Primary 11G50, Secondary 14G40}
\begin{document}

\begin{abstract} 
Let $\Ci$ be a curve of genus at least $2$ embedded in   $E_1 \times \cdots \times E_N$ where the $E_i$ are elliptic curves for $i=1,\dots, N$. In this article we give an explicit sharp bound for the N\'eron-Tate height of the points of  $\Ci$  contained in the union of all algebraic subgroups of dimension    $<\max(r_\Ci-t_\Ci,t_\Ci)$ where   $t_\Ci$,  respectively $r_\Ci$, is the minimal dimension of  a translate, respectively of a torsion variety, containing $\Ci$. 

 As a corollary, we give an explicit bound for the height of the rational points of special curves, proving  new cases of the explicit Mordell Conjecture and in particular making explicit  (and slightly more general in the CM case) the Manin-Dem'janenko method in products of elliptic curves. 
\end{abstract}
\maketitle

\section{Introduction}

By \emph{variety} we mean an algebraic variety defined over the algebraic numbers $\overline\qe$. If not otherwise specified,  we identify  $V$ with its algebraic points $V(\overline{\qe})$.
 We denote by $E$  an elliptic curve and by  $A$  an abelian variety. For a positive integer $N$, we denote by $E^N$ the cartesian product of $N$ copies of $E$. 

We say that a subvariety $V\subset  A$  is a \emph{translate},  respectively  a \emph{torsion variety},  if it is a finite  union of translates of  proper algebraic subgroups  of $A$ by  points, respectively  by torsion points.

Furthermore, an irreducible variety $V\subset  A$  is \emph{transverse}, respectively \emph{weak-transverse}, if  it is not contained in any translate, respectively in any torsion variety.

 In this article the word {\it rank} is used with his several different meanings. For clarity, we remember that the rank of an abelian group is the number of generators over $\Z$ of its free part; the rank of an $R$-module $M$ for $R$ a ring with field of franction ${\rm frac}{(R)}$ is the dimension of the vector space $M\otimes_R {\rm frac}(R)$; the $k$-rank of an abelian variety $A$ defined over $k$, for $k$ a number field, is the rank of $A(k)$ as an abelian group; and the rank of a point on an abelian variety $A$ is the only new concept  introduced in Definition \ref{rank} below. \\

A classical question in the context of Diophantine Geometry is to determine the points on a curve of a certain shape, for instance the  rational points. Much work has been done in this direction.  In a fundamental work Faltings \cite {FaltingsTeo} proved   the Mordell Conjecture, namely  that a curve of genus at least $2$ defined over a number field $k$ has only finitely many $k$-rational points. 

However the innovative  proof of this theorem is not effective in the sense that it does not give any information on how these points could be determined.  By explicit method, we mean a statement that provides an explicit bound for the height of the rational points on a curve, thus also a method to find them. This is one of the consequences  of our paper for curves with a factor of the Jacobian isogenous to a product $E^N$ with $E(\Q)$ of rank at most $N-1$.

The classical effective methods for the Mordell Conjecture are for instance  described by J.P. Serre in his book \cite{serre} Chapter 5 and they can be used for some collection of curves to give their rational points. 
The  Chabauty-Coleman   method \cite{Chab}  and  \cite{Coleman} provides a sharp bound for the number of rational points   on a curve   transverse in an abelian variety of dimension strictly larger than its rank (here the varieties are defined over $\Q$). We refer to McCallum and Ponen  \cite{poonen} and Stoll \cite{Stoll} and their references for overviews on the method and for examples of curves of  genus 2  where the Chabauty-Coleman method can be used,  in combination with some decent argument, for finding their rational points.
Another effective method is given by  Manin and Dem'janenko (\cite{Demj},\cite{Manin}); it applies to curves  defined over a number field $k$ that admit $m$ different $k$-independent morphisms towards an abelian variety $A$ defined over $k$ with rank of $A(k) <m$. However, the method does not give an explicit  dependence of the  height of the $k$-rational points neither in terms of the curve nor in terms of the morphisms. Thus, to apply the method   such a dependence must be elaborated  example by example. Some  examples are given by Kulesz \cite{KuleszApplicazioneMD} and  Kulesz, Matera and Schost \cite{Kul3} for some  curves  with Jacobian isogenous to  a product of  special elliptic curves of rank one if the genus of the curve is $2$ and of rank two is the genus is $3$. \\

 New  insides to this topic has been brought by a different approach introduced by Bombieri, Masser and Zannier \cite{BMZ} in the framework of anomalous intersections. They  showed that  the set of  points of rank at most $N-2$ on a curve transverse in the torus $(\overline{\Q}^*)^N$ is finite. The rank of a point in an abelian variety is defined as  follows and an analogous definition can be given in semi-abelian varieties.
\begin{D}
\label{rank}
 The rank  of a point in an abelian variety $A$ is the minimal dimension  of a torsion variety containing the point.
\end{D}

The several implications and connections between  conjectures on anomalous intersections and  classical conjectures can be read for instance in the book of  Zannier \cite{Zan12} and in the survey article of the author \cite{via15}.\\

A fundamental aspect of the proof  of Bombieri, Masser and Zannier for curves in tori is  that it is effective, in the sense that it gives a bound for the height of the points of rank at most $N-2$. This however does not give any information on the rational points of the curve, as the rank of $\Q^N$ is not finite.  In \cite{ioannali}, the author extended their method to transverse curves in  a product of $N$ elliptic curves with CM. An interesting feature of her proof is that  it gives a method to bound the height of the points of rank $\le N-2$.  However, like for the effective methods mentioned above, it is far from being explicit and it is completely not uniform, as the dependence on $\Ci$ and $E$ is not specified and it basis on special intersection numbers and height functions. 

In \cite{CV} Corollary 1.6, S. Checcoli and the author used a Lehmer type bound to prove a  uniform bound for the height of the points of rank $<N/2$  on weak-transverse curves in $E^N$ where $E$ has CM. Furthermore, in \cite{via15} Theorem 6.4  the author proved a similar bound for points of rank $\leq N-1$  on transverse curves in $E^N$ where $E$ has CM. The bounds are explicit in the dependence on $\Ci$ but not effective in the dependence on $E^N$. The method cannot be extended to the non CM case, as a Lehmer type bound is not known in this case.  Moreover, even in the CM case, the Lehmer type bound is not proved in an effective way, making these bounds not effective in the  dependence on $E$.

 In the last years, we have been working to approach the problem with   explicit methods  aiming to  prove  new cases of the explicit Mordell Conjecture and to eventually find all the rational points on some curves. In \cite{ExpTAC} and \cite{EsMordell}   joint with S. Checcoli and F. Veneziano, we give  an explicit  bound for the N\'eron-Tate height of the set of points of rank one on curves of genus at least two in $E^N$ where $E$ is   without CM. The  non CM assumption is  technical and we handled there the easier case where the endomorphism ring of $E$ is  $\Z$.  
 


In this article we generalize the result of \cite{ExpTAC} in two directions. We first present the explicit computations  needed  to extend the method introduced in \cite{ExpTAC} to the case of $E$ with CM. This is done in Proposition \ref{main}.  
 To prove this Proposition, we   give some estimates on the degrees of morphisms on $E^N$ where $E$ has CM.  Then, using the adelic geometry of numbers, we  extend the  approximations done in \cite{ExpTAC}  to  $K$-lattices, for $K$ an imaginary quadratic field.

Secondly, we   give an explicit  bound for the N\'eron-Tate height  of points of rank  larger than one. More precisely  of points of rank $<N$ if $\Ci$ is a transverse curve in $E^N$ and  of points of  rank $< \max(r_\Ci-t_\Ci, t_\Ci) $ if $\Ci$ has genus at least $2$  and it is contained in a translate of minimal dimension $t_\Ci$ and in a torsion variety of minimal dimension $r_\Ci$.  In particular,  this applies  to points of  rank $< \max(N-t_\Ci, t_\Ci) $ if $\Ci$ is weak-transverse  in $E^N$ and contained in a translate of minimal dimension $t_\Ci$.

For transverse curves the result is optimal with respect to the rank, while for weak-transverse curves it is not optimal. We recall that an effective result with the optimal  rank $N-2$ in the weak-transverse case is an extremely deep statement which would imply the effective Mordell-Lang Conjecture for curves. This is  well known to be one of the most challenging open problem of this last century.  \\


We now introduce all  the notation needed to state our main theorem. We first recall the  definition of the two invariants just introduced above.   \begin{D}
  \label{Drank}
 For a curve $\Ci \subset  A$, we denote by $t_\Ci$ the minimal dimension of a  translate containing $\Ci$ and by $r_\Ci$ the minimal dimension of a torsion variety containing $\Ci$.
  \end{D}
    Notice that for a transverse curve $t_\Ci=r_\Ci=\dim A$ and for a weak-transverse curve $r_\Ci=\dim A$.   
    Let $E$ be an elliptic curve. We fix a Weierstrass equation
\begin{equation}\label{uno1}y^2=x^3+Ax+B,\end{equation} with $A$ and $B$ algebraic integers. Let $\Delta$ and $j$ be the discriminant and $j$-invariant of $E$. 
 Let  $\rend$ be the ring of endomorphisms of $E$, $K$ its field of fractions  and  $D_K$ its discriminant.
We embed $E^N$ into $\P_2^N$ via the affine equation \eqref{uno1} and then via the Segre embedding in $\P_{3^N-1}$.  
   Let $\hat{h}$ be the N\'eron-Tate height  on $E^N$ and  $h_W$ the absolute logarithmic Weil height in the projective space.
   The degree of a  curve   $\Ci\subset  E^N$ is the degree of its image in $\P_{3^N-1}$;  the normalised  height $h_2(\Ci)$  is defined in terms of the Chow form of the ideal of $\Ci$ as done  in  \cite{patrice} and $h(\Ci)$ is the canonical height of $\Ci$, as defined in \cite{patriceI}  (see Section \ref{SezioneAltezze} for more details). 
   
  We can then state our main theorem. 
 \begin{thm}\label{teorema}
 Let $\mathcal{C}\subset  E^N$ be a  curve of genus at least $2$  and $K=\mathrm{Frac}(\rend)$ with discriminant $D_K$.
 Then the set of points   $P\in \Ci$ of rank $r <\max(r_\Ci-t_\Ci, t_\Ci)$  is a set of N\'eron-Tate height explicitly bounded as \begin{align*}
\hat{h}(P)\leq& (N-r) |D_K|^{\frac{2t_\Ci+t_\Ci r+4r}{2(t_\Ci-r)}}\left(c_1(t_\Ci,r)  h_2(\Ci)(\deg \Ci)^{\frac{r}{t_\Ci-r}}+c_2(t_\Ci,r,E)(\deg \Ci)^{{\frac{t_\Ci}{t_\Ci-r}}}\right)+\\
&+(N-r)t_\Ci^2 C(E)+\frac{h(\Ci)}{\deg \Ci}.
\end{align*}
If furthermore  $\Ci$ is transverse we obtain
\begin{align*}
 \hat{h}(P)\leq& |D_K|^{\frac{2N+Nr+4r}{2(N-r)}}\left(c_1(N,r)  h_2(\Ci)(\deg \Ci)^{\frac{r}{N-r}}+c_2(N,r,E) (\deg \Ci)^{{\frac{N}{N-r}}}\right)+N^2 C(E).
\end{align*}
For any  integer $n>r$ we define  \begin{align*}
&c_1(n,r)=\left(2^{8n^2} 3^{2n^2} n^{9n^2}\right)^{\frac{1}{n-r}}\\
& c_2(n,r,E)=c_1(n,r)\left(3^{n}+n^2 C(E)\right),
\end{align*}
and 
$$ C(E)=\frac{h_W(\Delta)+3h_W(j)}{4}+\frac{h_W(A)+h_W(B)}{2}+4.$$
\end{thm}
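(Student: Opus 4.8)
The plan is to reduce the statement, in two geometric steps, to the rank-one case for a transverse curve in a power of $E$ with complex multiplication — which is the content of Proposition~\ref{main} — and then to propagate every constant explicitly through the reduction.

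\emph{Step 1 (reduction to a transverse curve).} Let $P\in\Ci$ have rank $r<\max(r_\Ci-t_\Ci,t_\Ci)$. If $r<t_\Ci$, let $B_0$ be the connected component of the minimal translate containing $\Ci$; since $E$ has complex multiplication by an order in $K$, $B_0$ is isogenous to $E^{t_\Ci}$, and one produces a surjective morphism $\pi\colon E^N\to E^{t_\Ci}$ carrying $\Ci$ onto a transverse curve $\Ci'\subset E^{t_\Ci}$ and $P$ onto a point of rank $\le r$. If instead $r<r_\Ci-t_\Ci$, one works inside the minimal torsion variety containing $\Ci$ (isogenous to $E^{r_\Ci}$) and exploits the extra translate structure there; the quotient that still does not contract $\Ci$ then has dimension $r_\Ci-t_\Ci$, which is why this is the second entry of the maximum. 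In either case $\pi$ comes with an explicit bound for $\deg\pi$ — here the degree estimates for morphisms on CM powers of $E$ enter, and this is the origin of the powers of $|D_K|$ in the final bound — and $\deg\Ci'$, $h_2(\Ci')$, $h(\Ci')$ are controlled in terms of $\deg\Ci$, $h_2(\Ci)$, $h(\Ci)$ by functoriality of heights under $\pi$ together with the explicit form of $\deg\pi$. The factor $N-r$ in the statement bookkeeps the coordinates involved in rebuilding $\hat h(P)$ from the height of its image.

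\emph{Step 2 (the transverse core).} We are left with a transverse curve $\Ci\subset E^{t}$ of genus $\ge 2$ ($t=t_\Ci$, or $t=r_\Ci$ in the weak-transverse case) and a point $P$ of rank $\rho\le r<t$; the transverse statement of the theorem is precisely this case with $t=N$. Here one runs the explicit Manin--Dem'janenko argument of \cite{ExpTAC}, now over $\mathcal{O}_K$: since $\rho<t$ the point $P$ lies in a proper algebraic subgroup, and the adelic geometry of numbers over $\mathcal{O}_K$ (the approximation arguments of \cite{ExpTAC} transported from $\Z$-lattices to $K$-lattices) furnishes, for a free real parameter $Q$, a well-chosen auxiliary morphism / algebraic subgroup $B$ of degree explicitly bounded in terms of $Q$, $t$ and $|D_K|$, such that the relevant intersection of $\Ci$ with $B$ is zero-dimensional — this uses $\rho\le r<t_\Ci$, and the genus hypothesis (which forces $t_\Ci\ge 2$) — and such that $P$ is caught by it up to a quantity decreasing in $Q$. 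Applying the explicit arithmetic B\'ezout inequality in the normalised-height formulation of Philippon (\cite{patrice}, \cite{patriceI}) to this zero-dimensional intersection gives a self-improving inequality for $\hat h(P)$ whose contributions are $h_2(\Ci)\,\deg B$, $\deg\Ci\,\deg B$ and a term decreasing in $Q$; optimising $Q$ balances them and yields exactly a bound of shape $h_2(\Ci)(\deg\Ci)^{r/(t-r)}+(\deg\Ci)^{t/(t-r)}$, plus the additive terms $t^{2}C(E)$ and $h(\Ci)/\deg\Ci$, the latter arising from the comparison between the canonical and the normalised heights of $\Ci$ and between the projective Weil height and $\hat h$ on the Segre--Weierstrass model. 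The explicit constants $c_1(n,r)$ (with its factor $n^{9n^2}$), $c_2(n,r,E)$ and $C(E)$ are then read off, respectively, from the explicit Minkowski and B\'ezout constants, the degree $3^{N}$ of the embedding, and the explicit heights $h_W(\Delta),h_W(j),h_W(A),h_W(B)$ of the Weierstrass model.

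\emph{Main obstacle.} The crux is making Step 2 completely explicit over $\mathcal{O}_K$: one must carry out the geometry-of-numbers and approximation arguments for $K$-lattices — controlling the degree (and height) of the auxiliary subgroup $B$, hence of the intersection, with an explicit dependence on $|D_K|$ — and then tune $Q$ so as to recover precisely the exponents $\tfrac{r}{t_\Ci-r}$, $\tfrac{t_\Ci}{t_\Ci-r}$ and the exact power of $|D_K|$, all while keeping the arithmetic B\'ezout inequality in its sharp normalised form and verifying at each quotient of Step 1 that $\Ci$ is not contracted. The explicit CM degree estimates for morphisms of $E^N$, needed both in Step 1 and to bound the auxiliary degree in Step 2, are the main new ingredient with respect to \cite{ExpTAC}.
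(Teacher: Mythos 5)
Your Step 2 is essentially the paper's transverse-case argument (Theorem \ref{casotras}): Proposition \ref{main}, applied with $m=r$ and $s=1$ and a free parameter $T$, produces a codimension-one translate $H+P$ with $\deg(H+P)\le\alpha T$ and $h_2(H+P)\le\beta T^{1-N/r}\hat h(P)+\gamma T$; transversality forces $P$ to be a component of $\Ci\cap(H+P)$, the arithmetic B\'ezout theorem then gives the self-improving inequality you describe, and the choice $T\asymp(\beta\deg\Ci)^{r/(N-r)}$ makes the coefficient of $\hat h(P)$ smaller than $1$ so that it can be absorbed. Note, however, that the powers of $|D_K|$ come from the adelic Minkowski theorem for $K$-lattices inside this transverse core (Proposition \ref{stimaaltezzanormalizzatatraslato} and Lemmas \ref{LemmaHab1}, \ref{LemmaHab3}), not, as you assert in Step 1, from the degree of a reduction morphism: they already appear in the purely transverse bound, where no reduction takes place.

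The genuine gap is in your Step 1. First, for points of rank $r<r_\Ci-t_\Ci$ you propose to work in a quotient ``of dimension $r_\Ci-t_\Ci$ that does not contract $\Ci$''; no argument of that shape can deliver the stated bound, whose exponents have denominator $t_\Ci-r$ and become meaningless once $r\ge t_\Ci$, which is exactly what can happen when $r_\Ci-t_\Ci>t_\Ci$. The missing observation is that this case is vacuous: writing $\Ci=\Ci_0+Q$ with $\Ci_0$ transverse in the minimal abelian subvariety $H_0$ of dimension $t_\Ci$ and $Q\in H_0^{\perp}$, one shows $\rank Q=r_\Ci-t_\Ci$, so every point of $\Ci$ has rank at least $r_\Ci-t_\Ci$ and there are no points of rank $<r_\Ci-t_\Ci$ at all; this emptiness is precisely why $r_\Ci-t_\Ci$ is the second entry of the maximum. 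Second, for $r<t_\Ci$ you reduce via an isogeny $B_0\sim E^{t_\Ci}$ and claim ``an explicit bound for $\deg\pi$'', but you give no mechanism to bound that degree in terms of the data occurring in the theorem, and any such constant would enter the final inequality, which contains none. The paper avoids this entirely: after a Gauss reduction of the matrix defining $H_0$, arranged so that the coordinate of $P_0$ of maximal height is retained, the projection $\pi\colon E^N\to E^{t_\Ci}$ onto a suitable set of $t_\Ci$ coordinates is surjective on $H_0$, hence $\pi(\Ci)$ is transverse in $E^{t_\Ci}$ with $\deg\pi(\Ci)\le\deg\Ci$ (by Masser--W\"ustholz) and $h(\pi(\Ci))\le h(\Ci)$, and then $\hat h(P)=\hat h(P_0)+\hat h(Q)\le(N-r)\hat h(\pi(P))+h(\Ci)/\deg\Ci$, the last term coming from Zhang's inequality applied to $\hat h(Q)$, not from the Weil/N\'eron--Tate comparison as you suggest.
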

As remarked in several works, the proof of Theorem \ref{teorema}  for points of a certain rank works similarly also for curves in a product of elliptic curves $E_1\times \cdots E_N$ instead of $E^N$. It this case it simply happens that some of the coefficients of the linear forms defining an algebraic subgroup  are zero. So Theorem \ref{teorema}  holds in $E_1\times \cdots E_N$ with $E_i$ where we fix equations of the type (\ref{uno1}) for every $E_i$. Using the universal property of the Jacobian one can easily state our theorems for curves with a factor of the Jacobian isogenous to $E_1\times \cdots E_N$.

The proof of a slightly sharper version of Theorem \ref{teorema} is given is Section \ref{DIMO}. We divide the theorem in two parts. The first one, given in Theorem \ref{casotras}, treats the case of a transverse curve.  This is proven with   typical tools of   diophantine approximation, based on some estimates for degree and heights done in  Section \ref{proofmain}  and  some refined argument of geometry of numbers presented in  Section \ref{atrans}. The second part is given in Theorem \ref{thmgen}. We reduce, via a geometric induction argument, the general case to  the case of a transverse curve. \\

Theorem \ref{teorema} can be easily used  to describe the rational points of curves in $E^N$  under some conditions on the  rank of  the group $E(\Q)$ as $\rend$-module. We prove the following cases of the explicit Mordell Conjecture.  
 \begin{cor}
 \label{cor1}
 Let $E$ be an elliptic curve  and  let $\Ci$ be a curve of genus at least $2$ in $E^N$, both defined over a number field $k$. 

 Assume that 
 $E(k)$ is an $\rend$-module of  rank $r <\max(r_\Ci-t_\Ci,t_\Ci)$. Then the set of $k$-rational points  $\Ci(k)$ has  N\'eron-Tate height bounded as
\begin{align*}
\hat{h}(P)\leq& (N-r) |D_K|^{\frac{2t_\Ci+t_\Ci r+4r}{2(t_\Ci-r)}}\left(c_1(t_\Ci,r)  h_2(\Ci)(\deg \Ci)^{\frac{r}{t_\Ci-r}}+c_2(t_\Ci,r,E)(\deg \Ci)^{{\frac{t_\Ci}{t_\Ci-r}}}\right)+\\
&+(N-r)t_\Ci^2 C(E)+\frac{h(\Ci)}{\deg \Ci},
\end{align*}
where the constants are the same as in Theorem \ref{teorema}. 
 \end{cor}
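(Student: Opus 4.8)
The plan is to derive Corollary \ref{cor1} from Theorem \ref{teorema} by a standard Mordell--Weil descent / rank-counting argument, exploiting the hypothesis that $E(k)$ has small $\rend$-rank. First I would observe that the content of the corollary is a purely group-theoretic translation: if $P\in\Ci(k)$, then $P=(P_1,\dots,P_N)$ with each $P_i\in E(k)$, and I must show that such a $P$ has rank (in the sense of Definition \ref{rank}) at most the $\rend$-rank $r$ of $E(k)$. Indeed, let $G\subset E(k)$ be the $\rend$-submodule generated by $P_1,\dots,P_N$; by hypothesis $G$ has $\rend$-rank $\le r$, so modulo torsion it is generated by $r$ elements $Q_1,\dots,Q_r$ over $\rend$. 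Each $P_i$ is then congruent, modulo a torsion point, to an $\rend$-linear combination of the $Q_j$; writing this out coordinate-wise gives $N-r$ (or rather $N-\mathrm{rank}$) independent $\rend$-linear relations with integer-in-$\rend$ coefficients that $P$ satisfies up to torsion. These relations cut out a torsion variety (a component of an algebraic subgroup translated by a torsion point) of dimension $\le r$ containing $P$. Hence $P$ has rank $\le r < \max(r_\Ci-t_\Ci,t_\Ci)$.

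The second step is then immediate: every point of $\Ci(k)$ lies in the set of points of $\Ci$ of rank $< \max(r_\Ci-t_\Ci,t_\Ci)$, so Theorem \ref{teorema} applies verbatim and yields the stated height bound with the very same constants $c_1(t_\Ci,r)$, $c_2(t_\Ci,r,E)$, $C(E)$ and the same exponents of $|D_K|$ and $\deg\Ci$. One should be slightly careful about the role of the number field $k$: the rank appearing in the bound of Theorem \ref{teorema} is the rank of the \emph{point}, and although a given $P\in\Ci(k)$ might a priori have rank strictly smaller than $r$, the function $x\mapsto (N-x)$ together with the negative exponents in $t_\Ci-x$ are controlled, so one simply applies the theorem with the true rank of each point, which is $\le r$; monotonicity of the right-hand side in the relevant range (which is where one uses $r<t_\Ci$, so $t_\Ci-r>0$) gives the uniform bound with $r$ as written. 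If one prefers, one can first enlarge $k$ so that all the torsion occurring is rational without affecting $E(k)$'s $\rend$-rank, but this is not even necessary.

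The only genuinely non-formal point, and the one I would treat with a little care, is the passage from ``$E(k)$ has $\rend$-rank $r$'' to ``$P$ lies in a torsion variety of dimension $\le r$''. The subtlety is that $\rend$ need not be a PID when $E$ has CM (it is an order in the imaginary quadratic field $K$), so $E(k)/E(k)_{\mathrm{tor}}$ is a finitely generated torsion-free $\rend$-module but not necessarily free; still, it embeds into an $r$-dimensional $K$-vector space, and that is all one needs. Choosing a $K$-basis and clearing denominators produces, for the coordinates $P_1,\dots,P_N$ of $P$, a system of $N-r$ relations $\sum_i \alpha_{\ell i} P_i \in E_{\mathrm{tor}}$ with $\alpha_{\ell i}\in\rend$ and the $(N-r)\times N$ matrix $(\alpha_{\ell i})$ of rank $N-r$ over $K$; these define an algebraic subgroup of $E^N$ of dimension $r$, a torsion translate of which contains $P$. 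This is exactly the classical Manin--Dem'janenko observation (the $m$ independent morphisms being the coordinate projections, or rather the $\rend$-combinations of them), recast in the language of rank of points; I do not expect any real obstacle here, only the bookkeeping of the CM case. The remark following Theorem \ref{teorema} already notes that the theorem holds for $E_1\times\cdots\times E_N$ and, via the universal property of the Jacobian, for curves whose Jacobian has a factor isogenous to such a product, so the corollary transfers to that generality as well without extra work.
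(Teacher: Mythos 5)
Your proposal is correct and follows essentially the same route as the paper, whose proof is just the one-line observation that if $E(k)$ has $\rend$-rank at most $r$ then every point of $\Ci(k)$ has rank at most $r$ in the sense of Definition \ref{rank}, so Theorem \ref{teorema} applies directly. Your extra care about the non-free $\rend$-module structure in the CM case and about points of rank strictly smaller than $r$ (which one can handle most cleanly by enlarging the torsion variety containing such a point to one of dimension exactly $r$, since Proposition \ref{main} only needs some torsion variety of dimension $m$ containing $P$) simply fills in details the paper leaves implicit.
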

Clearly if $E(k)$ has rank as $\rend$-module at most $r$ then all points in $\Ci(k)$ have rank at most $r$ in the sense of Definition \ref{rank}. So our main theorem directly implies Corollary \ref{cor1}.  This is the most general result we have obtained in the context of the  explicit Mordell Conjecture, and it is not covered by any  of our previous results, specifically we had explicit results only for points of rank one in the non CM case.

  Note that, by the Mordell-Weil Theorem, $E(k)$ is a finitely generated  abelian group  whose torsion-free part has $r$ generators if $\rend=\Z$ and $2r$ generators if $\rend=\Z+\tau \Z$. Thus to apply our theorem for transverse curves   it is sufficient to assume that $E^N$ has $k$-rank (in the sense of number of generators of $E^N(k)$)  smaller than $N^2$ in the non CM case and smaller than $2N^2$ in the CM case.  In particular this  extends (in the CM case) and makes explicit  the Manin and Dem'janenko method for  curves transverse in $E^N$. The coordinates morphisms restricted to the curve give the independent morphisms toward the elliptic curve  $E$ required by the Manin-Dem'janenko method. For $E$ without CM our and the Manin-Dem'janenko  assumption on the rank are the same; in the CM case our assumption is ${\rm rank}(E(k) <2N$ while the Manin-Dem'janenko assumption is ${\rm rank}(E(k)) <N$. 
  
  We finally remark that   the Chabauty-Coleman method in our setting of product of elliptic curves becomes trivial, indeed their assumption is that the rank of $E^N(\Q)$ is smaller than $N-1$. This  implies that $E$ has rank zero, and in a product of elliptic curves that at least one factor has rank zero. Then a simple use of the Arithmetic B\'ezout Theorem immediately gives a bound for the height of the rational points on the curve.\\
  
   In \cite{EsMordell} we gave a criterium for constructing transverse curves in $E^2$ where 
\begin{equation*}
\begin{split}
 y_1^2&=x_1^3+Ax_1+B, \\
y_2^2&=x_2^3+Ax_2+B
\end{split}
\end{equation*}
 are the equations of $E^2$ in $\P_2^2$ for affine coordinates $(x_1,y_1)\times (x_2,y_2)$.
 It is sufficient to cut  a curve in $E^2$ with  any polynomial of the  form $p(x_1)=y_2$ where  $p(x)\in \overline{\qe}[x]$.  In \cite{EsMordell} our examples  were given under the assumption that $E$ is non CM.  We can now extend them to any $E$, obtaining new iteresting examples.  In Section \ref{esempiE2} we prove the following 
\begin{cor}\label{caso_poly}  Assume that $E$ is defined over a number field $k$ and that $E(k)$ is an $\rend$-module of rank 1. 
 Let $\Ci$ be the projective closure  in $E^2$ of the affine curve cut by the additional equation $$p(x_1)=y_2,$$ with $p(x)=p_n x^n+\ldots+p_1x+p_0\in k[x]$ a non-constant polynomial  of degree $n$ with $m$ non-zero coefficients. 

Then every point  $P\in \Ci(k)$ has N\'eron-Tate height bounded as 
\[\hat h(P)\leq 2\cdot 10^{14} |D_K|^5  \left(h_W(p)+\log m+4C(E)\right)(2n+3)^2+4 C(E)\]
where $h_W(p)=h_W(1:p_0:\ldots:p_n)$ is the height of the polynomial $p(x)$ and the constant $C(E)$ is defined in Theorem \ref{teorema}.
\end{cor}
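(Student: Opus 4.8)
The plan is to deduce Corollary~\ref{caso_poly} from the transverse case of Theorem~\ref{teorema} with $N=2$ and $r=1$. These values give exponent $\frac{2N+Nr+4r}{2(N-r)}=5$, powers $(\deg\Ci)^{\frac{r}{N-r}}=\deg\Ci$ and $(\deg\Ci)^{\frac{N}{N-r}}=(\deg\Ci)^2$, and final term $N^2C(E)=4C(E)$, matching the shape of the claimed bound. Three tasks must be carried out: (i) show that $\Ci$ is an irreducible curve of genus at least $2$ that is transverse in $E^2$, so that $t_\Ci=r_\Ci=2$ and the rank condition $r<\max(r_\Ci-t_\Ci,t_\Ci)=2$ holds for $r=1$; (ii) bound $\deg\Ci$ and $h_2(\Ci)$ explicitly in terms of $n$, $m$ and $h_W(p)$; (iii) substitute and collect the constants.

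Task (i) is the criterion for transverse curves in $E^2$ of \cite{EsMordell}, whose proof is purely geometric and hence unaffected by whether $\rend$ equals $\Z$ or an order in an imaginary quadratic field. The transversality argument runs as follows. Since $\dim\Ci=1$, if $\Ci$ lay in a translate of a proper algebraic subgroup of $E^2$, that subgroup would be a coset of the kernel of a surjection $f=\phi\circ\pi_1+\psi\circ\pi_2\colon E^2\to E'$ onto an elliptic curve $E'$, with $(\phi,\psi)\neq(0,0)$ in $\rend^2$, so that $\phi(P_1)+\psi(P_2)=Q_0$ for a fixed $Q_0$ and all $(P_1,P_2)\in\Ci$. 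For a general $P_1\in E$, the three points of $\Ci$ lying over it under $\pi_1$ are $(P_1,Q_1)$, $(P_1,Q_2)$, $(P_1,Q_3)$, where $Q_1,Q_2,Q_3$ are the three points of $E$ with $y$-coordinate $p(x(P_1))$; these lie on the affine line $y=p(x(P_1))$, which misses the point at infinity of $E$, so $Q_1+Q_2+Q_3$ is the origin of $E$. Summing the three relations $\psi(Q_i)=Q_0-\phi(P_1)$ yields $3\,\phi(P_1)=3Q_0$ identically in $P_1$, whence $\phi=0$; then $\psi\neq0$ and $\psi(Q)=Q_0$ for the infinitely many $Q\in E$ occurring as the $Q_i(P_1)$, contradicting the finiteness of $\psi^{-1}(Q_0)$. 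Irreducibility of $\Ci$ and $g(\Ci)\geq2$ are obtained in \cite{EsMordell} by a monodromy/linear-disjointness argument for the degree-$3$ cover $\pi_1\colon\Ci\to E$ together with Riemann--Hurwitz, and again this part is insensitive to the presence of CM.

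For task (ii), clearing denominators in $p(x_1)=y_2$ produces the bihomogeneous form $Z_2\sum_{i=0}^{n}p_iX_1^iZ_1^{n-i}-Y_2Z_1^n$ of bidegree $(n,1)$ on $\P_2\times\P_2$, whose zero locus on $E^2$ is $\Ci$ together with the fibre $\{\infty\}\times E$ taken with multiplicity $n$. Intersecting the class $n\,\pi_1^*\mathcal{O}_E(1)+\pi_2^*\mathcal{O}_E(1)$ with the hyperplane class $\pi_1^*\mathcal{O}_E(1)+\pi_2^*\mathcal{O}_E(1)$ of the Segre embedding, using $(\pi_i^*\mathcal{O}_E(1))^2=0$ and $\pi_1^*\mathcal{O}_E(1)\cdot\pi_2^*\mathcal{O}_E(1)=9$, and subtracting $n\deg(\{\infty\}\times E)=3n$, gives $\deg\Ci=3(2n+3)$. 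For $h_2(\Ci)$ I would feed the two Weierstrass equations of $E^2$ together with the extra form above into the arithmetic B\'ezout theorem of \cite{patrice,patriceI}; since the form has $m$ non-zero coefficients of Weil height controlled by $h_W(p)$, this bounds $h_2(\Ci)$ by an absolute constant times $\bigl(h_W(p)+\log m+C(E)\bigr)(2n+3)$, the term $\log m$ recording the number of monomials and $C(E)$ absorbing the contribution of the coefficients $A,B,\Delta,j$ of $E$.

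For task (iii) I substitute $N=2$, $r=1$ into the transverse bound of Theorem~\ref{teorema} --- or rather into its slightly sharper form proved in Section~\ref{DIMO} --- evaluate $c_1(2,1)$ and $c_2(2,1,E)=c_1(2,1)(9+4C(E))$, insert $\deg\Ci=3(2n+3)$ and the bound for $h_2(\Ci)$ from task (ii), and collect all numerical constants together with the factor $|D_K|^5$, arriving at $\hat h(P)\le 2\cdot10^{14}\,|D_K|^5\bigl(h_W(p)+\log m+4C(E)\bigr)(2n+3)^2+4C(E)$. The main obstacle is task (ii): the bound for $h_2(\Ci)$ must be worked out carefully enough --- tracking the dependence on $n$, on the number $m$ of monomials, and on the invariants of $E$ --- to land on the clean quantity $(2n+3)^2$ and on a constant small enough to be absorbed into $2\cdot10^{14}$, rather than a cruder bound; this is exactly where the sharper version of Theorem~\ref{teorema} and a precise use of the arithmetic B\'ezout theorem are indispensable. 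Tasks (i) and (iii) are otherwise routine.
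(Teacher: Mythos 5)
Your proposal follows essentially the same route as the paper: verify that $\Ci$ is transverse in $E^2$ with $\deg\Ci=6n+9$ and $h_2(\Ci)$ bounded by a multiple of $(2n+3)\left(h_W(p)+\log m+C(E)\right)$, then apply the sharper transverse bound of Theorem \ref{casotras} with $N=2$, $r=1$ and collect the constants (including $|D_K|^5$ and $c_1(2,1)=2^{41}3^6/\pi^2$). The only difference is that the paper imports steps (i) and (ii) wholesale from \cite{EsMordell}, Theorem 6.2, which supplies the explicit estimate $h_2(\Ci)\leq 6(2n+3)\left(h_W(p)+\log m+2C(E)\right)$ — precisely the constant you leave open in your arithmetic B\'ezout sketch — while your transversality argument and degree computation agree with that cited result.
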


We finally remark that in \cite{ioant} the author  proved that the set of points of rank $N-2$ on a weak-transverse curve $\Ci$ in $E^N$ has bounded height; the proof is not effective and it uses in a non effective way the Vojta inequality, like it is used in the proof of the Mordell-Lang  Conjecture. However, in \cite{ioannali} the finiteness of the points of $\Ci$ of rank $\le N-2$ is effective and the bound on  their number depends  on $\Ci$, $E$, $N$ and on  the bound for their height in an effective way. 
In Section \ref{dimocorcard}, we use our  Theorem \ref{teorema},  \cite{ioant} Theorem 1.3    and a projection argument to prove:
\begin{cor}\label{cardinalita}
For any  curve $\Ci$  of genus at least two embedded in $E^N$,  the set of points of $\Ci$ of rank $<\max(r_\Ci-t_\Ci,t_\Ci)$ has cardinality effectively bounded.
\end{cor}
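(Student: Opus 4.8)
The idea is to reduce, by a geometric argument, to a weak-transverse curve in a smaller product of elliptic curves, and then to feed the explicit height bound of Theorem \ref{teorema} into the effective counting of points of bounded height of \cite{ioant} Theorem 1.3. First I would normalise the geometry. Set $r=r_\Ci$ and let $T_0$ be a torsion variety of minimal dimension $r$ containing $\Ci$; since $\Ci$ is irreducible it lies in a single translate $\zeta+B_0$ with $B_0\subseteq E^N$ an abelian subvariety of dimension $r$ and $\zeta$ torsion. Translating $\Ci$ by $-\zeta$ changes neither its genus, nor $t_\Ci$ and $r_\Ci$, nor the rank of any of its points (a torsion variety of dimension $\rho$ through $P$ corresponds, under translation by a torsion point, to one of the same dimension through $P+\zeta$), so we may assume $\Ci\subseteq B_0$. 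As $B_0$ is isogenous to $E^{r}$, fix an isogeny $\varphi\colon E^{r}\to B_0\subseteq E^N$ and let $\Ci'$ be an irreducible component of $\varphi^{-1}(\Ci)$. Then $\Ci'\subset E^{r}$ has genus $\ge2$ (Riemann--Hurwitz, $\varphi$ being \'etale), it is weak-transverse in $E^{r}$ by minimality of $r$, one has $t_{\Ci'}=t_\Ci$, and $\varphi$ restricts to a finite surjective morphism $\Ci'\to\Ci$ of degree at most $\deg\varphi$. Crucially rank is preserved: for $Q\in\Ci'$ the rank of $Q$ in $E^{r}$ equals that of $\varphi(Q)$ in $E^N$, because images and preimages of torsion varieties under $\varphi$, as well as intersections of torsion varieties with $B_0$, are again torsion varieties of the same dimension through the point concerned. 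Hence the set of points of $\Ci$ of rank $<m:=\max(r_\Ci-t_\Ci,t_\Ci)$ is the $\varphi$-image of the analogous set for $\Ci'$, and bounding the cardinality of the latter suffices, up to the factor $\deg\varphi$.

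It then remains to count the points of $\Ci'\subset E^{r}$ of rank $<m$. Since $\Ci'$ has genus $\ge2$ it lies in no one-dimensional translate, so $t_\Ci=t_{\Ci'}\ge2$, while $t_\Ci\le r$ always. If $t_\Ci<r$ then $m\le r-1$, so the points sought have rank $\le r-2$; by Theorem \ref{teorema} they all have N\'eron--Tate height at most an explicit $B=B(\Ci',E,r)$, and by \cite{ioant} Theorem 1.3 --- whose argument, once such an explicit $B$ is available, provides an explicit bound for the number of points of rank $\le r-2$ and height $\le B$ on the weak-transverse curve $\Ci'$ --- this number is effectively bounded, and transporting back along $\varphi$ finishes this case. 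In the remaining case $t_\Ci=r$, i.e., $\Ci'$ transverse in $E^{r}$, one first invokes Theorem \ref{teorema} to bound the height of the points of rank $\le r-1$ and then a suitable surjective homomorphism $E^{r}\to E^{r-1}$: combined with the height bound it confines the codimension-one torsion varieties that can occur, drops the rank by one, and puts the images in the range $\le(r-1)-2$ covered by \cite{ioant} Theorem 1.3 for the image curve in $E^{r-1}$.

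I expect the main obstacle to be exactly this borderline rank $r-1$: Theorem \ref{teorema} delivers only a \emph{height} bound, whereas \cite{ioant} requires the \emph{rank} to be at most $\dim-2$ on a \emph{weak-transverse} curve, so the projection has to be chosen so as to lower the rank by one while keeping the image curve weak-transverse of genus $\ge2$, and uniformly enough for the counting to remain effective. The other point requiring care is the first reduction --- exhibiting $\varphi$ with controlled degree and verifying that genus, rank and $t_\Ci$ are all preserved under $\varphi$ and under translation by torsion. Granting these, the remainder is a bookkeeping combination of the explicit constants of Theorem \ref{teorema} with the counting function furnished by \cite{ioant} Theorem 1.3.
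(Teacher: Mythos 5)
Your first reduction (translate by a torsion point, pull back along an isogeny $\varphi\colon E^{r_\Ci}\to B_0$ to get a weak-transverse curve $\Ci'\subset E^{r_\Ci}$ with the same $t_\Ci$ and with ranks preserved) is essentially sound, but the proof collapses exactly at the point you flag as the ``main obstacle'', and that is not a borderline corner case but the principal one: whenever $t_\Ci=r_\Ci$ (in particular for every transverse curve, where $t_\Ci=r_\Ci=N$) the points to be counted have rank up to $t_\Ci-1=\dim-1$, while the only counting tool you allow yourself is a statement for rank $\le\dim-2$ on a weak-transverse curve. The proposed repair --- a surjective homomorphism $E^{r}\to E^{r-1}$ that ``drops the rank by one'' --- does not work: under a homomorphism the rank of a point can only fail to increase, it does not decrease in general, and even if it did drop by one you would land at rank $\le (r-1)-1$, not in the range $\le (r-1)-2$ that you claim to reach. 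So the case you could not close is precisely the case the corollary is mostly about, and the proposal is incomplete there. A secondary issue is effectivity of the normalisation itself: to get an effective cardinality bound for $\Ci$ you must control $\deg\varphi$, $\deg\Ci'$ and $h(\Ci')$ explicitly, which your sketch leaves open.

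The paper's proof avoids both difficulties. It reuses the coordinate projection $\pi\colon E^N\to E^{t_\Ci}$ already constructed in the proof of Theorem \ref{thmgen}, for which $\pi(\Ci)$ is transverse in $E^{t_\Ci}$ with $\deg\pi(\Ci)\le\deg\Ci$ and $h(\pi(\Ci))\le h(\Ci)$. Points of $\Ci$ of rank $<r_\Ci-t_\Ci$ do not exist (again by the proof of Theorem \ref{thmgen}), the rank of $\pi(P)$ is at most the rank of $P$, and the height bound of Theorem \ref{casotras} together with Theorem 1.3 of \cite{ioant} is applied \emph{directly} to the points of rank $<t_\Ci$ on the transverse curve $\pi(\Ci)\subset E^{t_\Ci}$ --- no isogeny and no rank-lowering projection are needed. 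One then transports the count back to $\Ci$, the fibres of $\pi$ restricted to $\Ci$ having at most $\deg\Ci\,\deg E^{N-t_\Ci}$ points by B\'ezout. If you want to salvage your route, you would need the counting input in the same strength in which the paper uses it (rank $\le\dim-1$ on a transverse curve of bounded height), at which point the detour through $E^{r-1}$ becomes unnecessary.
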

Also here we can replace  $E^N$ by $E_1\times \dots \times E_N$, as explained above.
\section{Preliminaries}\label{SezioneAltezze}

 In this section we introduce the  notations and we recall several explicit relations between different height functions. 
  We also recall  some basic results in Arithmetic Geometry that play an important role in our proofs: the adelic Minkowski Theorem,  the Arithmetic B\'ezout  Theorem and the Zhang  Inequality. \\

Let $E$  be an elliptic curve defined over a number field $k$  by a fixed Weierstrass equation
\begin{equation}\label{Weq}
E: y^2=x^3+Ax+B
\end{equation}
with $A$ and $B$  in the ring of  integers  of $k$ (this assumption is not restrictive).  We denote the  discriminant of $E$ by
\[
 \Delta=-16(4A^3+27B^2)\] and the $j$-invariant  by \[j=\frac{-1728(4A)^3}{\Delta}.
\]
 We consider  $E^N$ embedded in $\P_{3^N-1}$ via the following composition map 
\begin{equation}\label{embe}
  E^N \hookrightarrow \P_2^N \hookrightarrow \P_{3^N-1}
\end{equation}
where the first map sends a point $(X_1,\ldots,X_N)$ to $((x_1,y_1),\dotsc,(x_N,y_N))$ (the $(x_i,y_i)$ being the affine coordinates of $X_i$ in the Weierstrass form of $E$) and the second map is the Segre embedding. 
Degrees and  heights are computed with respect to this fixed embedding.

\subsection{Heights of points}

If $P=(P_1:\ldots:P_n)\in \mathbb{P}_n(\overline{\Q})$ is a point in the projective space, then  the absolute logarithmic Weil height of $P$ is defined as
\[h_W(P)=\sum_{v\in \mathcal{M}_K} \frac{[K_v:\Q_v]}{[K:\Q]}\log \max_i \{\abs{P_i}_v\}\]
where $K$ is a field of definition for $P$ and $\mathcal{M}_K$ is its set of places. If $\alpha\in\overline\Q$ then the Weil height of $\alpha$ is defined as $h_W(\alpha)=h_W(1:\alpha)$.

We also define another height which differs  from the Weil height  at the Archimedean places:
\begin{equation}\label{Defh2}
h_2(P)=\sum_{v\text{ finite}}\frac{[K_v:\Q_v]}{[K:\Q]}\log \max_i \{\abs{P_i}_v\} +\sum_{v\text{ infinite}}\frac{[K_v:\Q_v]}{[K:\Q]}\log \left(\sum_i \abs{P_i}_v^2\right)^{1/2}.\end{equation}

 For a point $P\in E$ we denote by $\hat h(P)$ its N\'eron-Tate height as defined in \cite{patriceI}  {\bf (which} is one third of the usual N\'eron-Tate height used also in   \cite{ExpTAC}). 

If $P=(P_1,\dotsc,P_N)\in E^N$,  
 then for $h$ equal to $h_{W},h_2$ and $\hat{h}$ we define
\begin{equation*}
   h(P)=\sum_{i=1}^N h(P_i).
\end{equation*}

The following proposition directly  follows from \cite[Theorem 1.1]{SilvermanDifferenceHeights} and \cite[Proposition 3.2]{EsMordell}.
\begin{propo}\label{confrontoaltezze}
 


For $P\in E^N$, 
 \begin{equation*}
 -N C(E)\leq h_2(P)-\hat h (P)\leq N C(E),
\end{equation*}
where 
$$C(E)=\frac{h_W(\Delta)+3h_W(j)}{4}+\frac{h_W(A)+h_W(B)}{2}+4.$$


\end{propo}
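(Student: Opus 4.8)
The plan is to prove Proposition \ref{confrontoaltezze} by reducing to the case $N=1$ and then assembling the two named ingredients. First I would recall the definitions: $\hat h$ on $E^N$ is by construction the sum $\sum_{i=1}^N \hat h(P_i)$ of the one-variable N\'eron--Tate heights, and likewise $h_2(P)=\sum_{i=1}^N h_2(P_i)$, where on $E\subset\P_2$ the height $h_2$ of the point $(x_i:y_i:1)$ differs from the Weil height $h_W$ only at the archimedean places, by replacing the sup-norm with the $\ell^2$-norm. Hence it suffices to show, for a single point $Q\in E$, the estimate $-C(E)\le h_2(Q)-\hat h(Q)\le C(E)$; summing over $i=1,\dots,N$ then yields the stated inequality with the factor $N$.

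The two-step comparison for a single point goes as follows. By \cite[Proposition 3.2]{EsMordell} the difference $h_2(Q)-h_W(Q)$ between the $\ell^2$-modified height and the ordinary Weil height of a point on $E\subset\P_2$ is controlled: at each archimedean place the discrepancy between $\log\max_i|P_i|_v$ and $\log(\sum_i|P_i|_v^2)^{1/2}$ is at most $\tfrac12\log 3$ in absolute value (three coordinates), and summing the local contributions with their normalised weights gives $|h_2(Q)-h_W(Q)|\le \tfrac12\log 3$, or whatever explicit constant that proposition records. Then by \cite[Theorem 1.1]{SilvermanDifferenceHeights}, Silverman's explicit bound for the difference between the Weil height of the $x$-coordinate (equivalently of the point in $\P_2$ under our embedding) and the canonical height, one has an inequality of the shape $|h_W(Q)-\hat h(Q)|\le \tfrac14 h_W(j)+\tfrac1{?}h_W(\Delta)+\dots+\text{const}$; one must be a little careful here because the $\hat h$ used in this paper, following \cite{patriceI}, is one third of the classical N\'eron--Tate height, so Silverman's estimate (stated for the classical normalisation, and for the height of the $x$-coordinate rather than of the point under the Segre-composed embedding) has to be rescaled and re-expressed accordingly. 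Combining the two bounds by the triangle inequality produces $|h_2(Q)-\hat h(Q)|\le C(E)$ with
$$C(E)=\frac{h_W(\Delta)+3h_W(j)}{4}+\frac{h_W(A)+h_W(B)}{2}+4,$$
the $+4$ absorbing all the absolute numerical constants (the $\tfrac12\log3$, the constants in Silverman's theorem, and the adjustment between the height of $x$ and the height of the image point in $\P_2$).

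The main obstacle I anticipate is purely bookkeeping: matching normalisations. Three separate conventions have to be aligned — Silverman's classical N\'eron--Tate height versus the $\tfrac13$-normalised $\hat h$ of \cite{patriceI}; the height of the $x$-coordinate versus the projective height of $(x:y:1)$ in $\P_2$ (these differ by a bounded amount since $y^2=x^3+Ax+B$ forces $h_W(y)\le \tfrac32 h_W(x)+O(h_W(A)+h_W(B))$); and the passage from a point on a single $E\subset\P_2$ to the Segre-embedded $E^N\subset\P_{3^N-1}$, which is exactly where the factor $N$ (and not $3^N$) enters, because the $h_2$-height is \emph{additive} over the Segre factors. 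Once these conventions are pinned down the inequality is immediate from the two cited results; there is no genuine Diophantine content beyond them, so the proposition is essentially a lemma organising explicit constants for later use.
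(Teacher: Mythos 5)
Your proposal follows essentially the same route as the paper, which gives no independent argument but states that the proposition ``directly follows'' from \cite[Theorem 1.1]{SilvermanDifferenceHeights} and \cite[Proposition 3.2]{EsMordell} --- i.e.\ exactly the two-step comparison you describe (canonical vs.\ Weil height via Silverman's explicit bound, and sup-norm vs.\ $\ell^2$-norm at the archimedean places), combined with the additivity of $h_W$, $h_2$ and $\hat h$ over the $N$ factors to produce the factor $N$. Your constants are left schematic, but pinning down the normalisations you flag is precisely what the two cited results are invoked for, so the approach matches the paper's.
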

Further details on the relations between the different height functions defined above can be found in \cite{EsMordell}, Section 3.
\medskip

\subsection{Heights of varieties} 
For  a subvariety $V\subset \P_m$  we denote by $h_2(V)$ the normalised height of $V$ {defined}  in terms of the Chow form of the ideal of $V$, as done  in  \cite{patrice}. This height extends the height $h_2$ defined for points by formula \eqref{Defh2}  (see \cite{BGSGreen}, equation (3.1.6)). 
We also consider  the canonical height $h(V)$, as defined in \cite{patriceI}; when the variety $V$ reduces to a point $P$, then $h(P)=\hat h(P)$  (see \cite{patriceI}, Proposition 9).

\subsection{The degree of varieties}

The degree of  an irreducible variety $V\subset  \P_m$ is the maximal cardinality of a finite intersection $V\cap L$, with $L$ a linear subspace of dimension equal to the codimension of $V$. The degree is often conveniently computed as an intersection product.

If $X(E,N)$ is the image of $E^N$ in $\mathbb{P}_{3^N-1}$ via the above map, then by \cite{ExpTAC}, Lemma 2.1 we have \begin{equation}\label{degSegre}\deg X(E,N)=3^N N!.\end{equation}

\subsection{ The Arithmetic B\'ezout Theorem}
 The following explicit result is proven by Philippon in  \cite{patrice}, Th\'eor\`eme 3. It describes  the behavior of the height for intersections. \begin{thm}[Arithmetic B\'ezout theorem]\label{AriBez}
 Let $X$ and $Y$ be irreducible closed subvarieties of $\P_m$ defined over  the algebraic numbers. If $Z_1,\dotsc,Z_g$ are the irreducible components of $X\cap Y$, then
 \[
  \sum_{i=1}^g h_2(Z_i)\leq\deg(X)h_2(Y)+\deg(Y)h_2(X)+\csei(\dim X,\dim Y, m)\deg(X)\deg(Y)
 \]
where
\begin{equation}\label{costaBez}
 \csei(d_1,d_2,m)=\left(\sum_{i=0}^{d_1}\sum_{j=0}^{d_2} \frac{1}{2(i+j+1)}\right)+\left(m-\frac{d_1+d_2}{2}\right)\log2.
\end{equation}

\end{thm}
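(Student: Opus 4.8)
The plan is to run the elimination-theoretic argument: realise the intersection cycle $X\cdot Y$ through iterated hyperplane sections of the product $X\times Y$, express the relevant Chow forms by resultants, and then estimate local heights place by place, so that $\csei$ emerges as a telescoping sum of per-step contributions. Recall that $h_2(V)$ is the sum over all places $v$ of a local contribution $M_v(V)$ of the Chow form $f_V$ of $V$: at a finite $v$ this is the logarithm of the $v$-adic Gauss norm of $f_V$, and at an infinite $v$ it is the $L^2$-normalised Mahler measure $\int\log\abs{f_V}$ over a product of unit spheres, with Philippon's normalisation at infinity, so that $h_2$ restricts to \eqref{Defh2} on points. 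There is a product formula for the height of a product of varieties: $h_2(X\times Y)$ equals $\deg(X)\,h_2(Y)+\deg(Y)\,h_2(X)$ up to a correction bounded by a constant times $\deg(X)\deg(Y)$, which supplies the two cross terms of the theorem. Since $X\cap Y$ is isomorphic as a cycle to $(X\times Y)\cap\Delta$ with $\Delta\subset\P_m\times\P_m$ the diagonal of codimension $m$, the cycle $X\cdot Y$ is obtained from $X\times Y$ by $m$ successive hyperplane sections in a linear system cutting out $\Delta$ (after a standard specialisation argument reducing to generic sections); it thus suffices to control the growth of $h_2$ under one hyperplane section and to iterate.

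For an irreducible $V$ of dimension $d\ge1$ and a linear form $L$, the Chow form of the cycle $V\cdot\{L=0\}$ is obtained from $f_V$ by substituting the coefficients of $L$ for one block of the $\underline{u}$-variables and eliminating a redundant point-variable, i.e. by an explicit resultant in $f_V$; when the section is improper, an explicit power of the Chow form of a lower-dimensional subvariety of $V$ splits off as a factor. Both kinds of factors correspond to components of the final intersection, and since $h_2(Z_i)\ge0$ for every component and the per-component heights add in the Chow form, bounding $\sum_i h_2(Z_i)$ by the total height of the full intersection cycle (with multiplicities, and including the split-off pieces) only weakens the inequality, which is what we want. Iterating over the $m$ sections expresses the total height of $X\cdot Y$ as $h_2(X\times Y)$ plus a sum of contributions, one per section.

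The estimates are then carried out place by place. At a non-archimedean $v$, Gauss's lemma gives that the $v$-adic Gauss norm of a resultant (and of a product of polynomials) is at most the product of the Gauss norms of the inputs; no constant is created, only bookkeeping of degrees, so the finite places contribute exactly $\deg(X)\,h_2(Y)+\deg(Y)\,h_2(X)$ to the final bound. At an archimedean $v$ the key inequality bounds the $L^2$-normalised Mahler measure of the resultant/eliminant above in terms of those of its arguments plus an explicit integral over a product of unit spheres $\Sphere^{2m+1}$; evaluating such integrals via the classical formula $\int_{\Sphere^{2k-1}}\log\abs{z_1}\,d\mu=-\tfrac12\sum_{\ell=1}^{k-1}\tfrac1\ell$ and its refinements, and summing the per-section contributions of the $m$ elimination steps together with Philippon's normalisation corrections (which telescope), produces exactly the term $\sum_{i=0}^{d_1}\sum_{j=0}^{d_2}\tfrac1{2(i+j+1)}$; the repeated passage between the $\ell^1$-, $\ell^2$- and $\ell^\infty$-norms on the $m+1$ homogeneous coordinates, carried out $m-\tfrac{d_1+d_2}{2}$ times on average, yields the term $\bigl(m-\tfrac{d_1+d_2}{2}\bigr)\log2$. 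Adding all local estimates via the product formula gives the stated inequality.

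The difficulty is essentially archimedean: to obtain the sharp double harmonic sum and the precise coefficient of $\log2$, rather than a cruder exponential constant, one needs Philippon's specific normalisation of $h_2$ at infinite places, engineered so that the round-sphere corrections cancel across successive elimination steps, together with a sharp Mahler-measure estimate for resultants that does not lose powers of the degrees. A secondary point is the excess-intersection bookkeeping: one must check that improper sections split off genuine Chow forms of lower-dimensional subvarieties of $X\cap Y$, so that every component $Z_i$ is accounted for and passing from the full intersection cycle to the finite set of components only drops non-negative terms. Alternatively, the inequality can be derived from arithmetic intersection theory à la Bost--Gillet--Soulé, with $h_2(V)$ read as an arithmetic degree and the bound following from the arithmetic projection formula together with explicit bounds on archimedean Green currents; the resulting constant is of the same shape.
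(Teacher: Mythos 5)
First, a point of context: the paper does not prove Theorem~\ref{AriBez} at all --- it is quoted from Philippon, \cite{patrice}, Th\'eor\`eme~3, and used as a black box, so the only meaningful comparison is with Philippon's published proof. Your outline does follow the same elimination-theoretic strategy as that proof (Chow forms and resultants, reduction to the diagonal, place-by-place estimates with Gauss's lemma at the finite places and Mahler-measure/spherical-integral estimates at the archimedean ones), so as a roadmap it is the right one.

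As a proof, however, it has genuine gaps exactly where the work lies. The reduction to the diagonal is misstated: in $\P_m\times\P_m$ the diagonal, though of codimension $m$, is not cut out by $m$ hyperplane sections of the Segre embedding --- the $(1,1)$-forms $x_iy_j-x_jy_i$ defining it number $\binom{m+1}{2}$, and any $m$ of them cut out extra components --- so ``$m$ successive hyperplane sections in a linear system cutting out $\Delta$'' is not an argument one can run as stated. The construction that works is the ruled join in $\P_{2m+1}$, where the diagonal is a genuine linear subspace cut by the $m+1$ forms $x_i-y_i$; one must then prove a join/product height formula with an explicit error of size $O(\deg X\deg Y)$ and control dimension drops and excess intersection across those sections, none of which you supply. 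More importantly, the archimedean analysis, which is the sole source of the constant $\csei(\dim X,\dim Y,m)$, is asserted rather than derived: the claim that the spherical integrals ``produce exactly'' $\sum_{i,j}\tfrac{1}{2(i+j+1)}$, and especially that norm comparisons are ``carried out $m-\tfrac{d_1+d_2}{2}$ times on average'' so as to yield $\bigl(m-\tfrac{d_1+d_2}{2}\bigr)\log 2$, is numerology reverse-engineered from the statement, not a proof; the needed input is a sharp place-by-place bound for the height of the eliminant at each section, with the normalisation corrections tracked explicitly. Finally, discarding multiplicities and embedded pieces by invoking $h_2(Z_i)\ge 0$ is fine but needs a one-line justification (e.g.\ via Zhang's inequality \eqref{zhangh2} and $h_2\ge 0$ on points). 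In short: correct strategy, but the quantitative heart of the theorem is assumed; for this paper's purposes the honest options are to carry out Philippon's estimates in full or to cite \cite{patrice} as the author does.
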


\subsection{The Zhang Inequality}
 In order to state  Zhang's inequality,  we define the essential minimum $ \mu_2(X)$ of an irreducible algebraic subvariety $X\subset \P_m$ as 

\[
  \mu_2(X)=\inf\{\theta\in\R\mid\{P\in X\mid  h_2(P)\leq\theta\}\text{ is Zariski dense in }X\}.
\] 

The following result is due to Zhang \cite{Zhang95}, Theorem 5.2: \begin{thm}[Zhang inequality]\label{Zhang}
Let $X\subset \P_m$ be an irreducible algebraic subvariety. Then 
\begin{equation}\label{zhangh2}
\mu_2(X)\leq\frac{h_2(X)}{\deg X}\leq(1+\dim X)\mu_2(X).
\end{equation}
\end{thm}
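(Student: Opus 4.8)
The plan is to derive Theorem \ref{Zhang} from the arithmetic Hilbert--Samuel theorem of Gillet--Soul\'e (only its leading-term asymptotics are needed) together with the adelic Minkowski theorem, exploiting that the metric on $\mathcal{O}_{\P_m}(1)$ underlying $h_2$ is smooth and semipositive, so that its restriction to any irreducible $X$ is arithmetically ample. Write $d=\dim X$; the case $d=0$ of Theorem \ref{Zhang} is the tautology $\mu_2(P)=h_2(P)=h_2(X)/\deg X$, so assume $d\ge1$. Fix a number field $k$ over which $X$ is defined and a projective flat model $\mathcal{X}$ of $X$ over $\mathcal{O}_k$, and for $s\in H^0(\mathcal{X},\mathcal{O}(n))$ let $\|s\|_{\sup}$ be its archimedean $\ell^2$-sup-norm. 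Two facts will be used throughout. First, $H^0(\mathcal{X},\mathcal{O}(n))$ is an $\mathcal{O}_k$-lattice of rank $\tfrac{\deg X}{d!}n^{d}+o(n^{d})$, and (writing $\widehat h^0(X,n,\lambda):=\log\#\{s:\|s\|_{\sup}\le e^{-n\lambda}\}$) for every $\lambda<\tfrac{h_2(X)}{(d+1)\deg X}$ one has $\widehat h^0(X,n,\lambda)=\tfrac{n^{d+1}}{d!}\bigl(\tfrac{h_2(X)}{d+1}-\lambda\deg X\bigr)+o(n^{d+1})\to+\infty$: the case $\lambda=0$ is the arithmetic Hilbert--Samuel theorem (and $h_2(X)>0$ here since $d\ge1$), the general one following from it by the adelic Minkowski count for rescaled balls. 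Second, any nonzero $s$ with $\|s\|_{\sup}\le1$ satisfies $\deg\mathrm{div}(s)=n\deg X$ and $h_2(\mathrm{div}(s))\le n\,h_2(X)$ --- obtained by representing $\widehat c_1(\overline{\mathcal{O}(n)})$ on $\mathcal{X}$ by the arithmetic divisor of $s$ and discarding the non-negative Green-current and vertical contributions --- whereas a nonzero $s$ with $\|s\|_{\sup}\le e^{-n\lambda}$ has $h_2(P)\ge\lambda$ at every $P$ with $s(P)\ne0$.

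For the lower bound $\mu_2(X)\le h_2(X)/\deg X$ I would induct on $d$. It suffices, by the definition of the essential minimum, to produce for every proper closed $W\subsetneq X$ and every $\epsilon>0$ a point of $X\setminus W$ of height at most $h_2(X)/\deg X+\epsilon$. Using the case $\lambda=0$ above and adelic Minkowski, for $n\gg0$ pick a nonzero $s\in H^0(\mathcal{X},\mathcal{O}(n))$ with $\|s\|_{\sup}\le1$; since the sections vanishing along a given codimension-one component of $W$ form a comparatively thin sublattice, a counting argument lets one choose $s$ so that $\mathrm{div}(s)$ contains none of the finitely many codimension-one components of $W$. Then $\mathrm{div}(s)$ is an effective cycle, pure of dimension $d-1$, with $\deg\mathrm{div}(s)=n\deg X$ and $h_2(\mathrm{div}(s))\le n\,h_2(X)$, so some irreducible component $Z$ satisfies $h_2(Z)/\deg Z\le h_2(X)/\deg X$, and $Z\not\subseteq W$ by construction. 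By the inductive hypothesis $\mu_2(Z)\le h_2(Z)/\deg Z$, hence the nonempty Zariski-open $Z\setminus W$ contains points of height at most $h_2(X)/\deg X+\epsilon$, as required.

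For the upper bound $h_2(X)/\deg X\le(1+d)\mu_2(X)$, set $\mu=\mu_2(X)$ and suppose for contradiction that $h_2(X)/\deg X>(1+d)(\mu+\epsilon)$ for some $\epsilon>0$, i.e.\ $\mu+\epsilon<\tfrac{h_2(X)}{(d+1)\deg X}$. Taking $\lambda=\mu+\epsilon$ in the asymptotics gives $\widehat h^0(X,n,\lambda)\to+\infty$, so for $n\gg0$ there is a nonzero $s\in H^0(\mathcal{X},\mathcal{O}(n))$ with $\|s\|_{\sup}\le e^{-n\lambda}$. Every $P$ with $s(P)\ne0$ then has $h_2(P)\ge\lambda=\mu+\epsilon$; but $\{P\in X:h_2(P)\le\mu+\tfrac\epsilon2\}$ is Zariski dense by the definition of $\mu$, and this dense set lies in $\{s=0\}$, forcing $s\equiv0$ --- a contradiction. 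Letting $\epsilon\to0$ gives the bound. Run instead with the successive minima $e_i(X)=\sup\{\inf_{P\in(X\setminus Y)(\overline{\Q})}h_2(P):Y\subsetneq X\text{ closed},\ \dim Y\le d-i\}$ (so that $\mu_2(X)=e_1(X)\ge e_2(X)\ge\cdots\ge e_{d+1}(X)\ge0$), the same two arguments yield Zhang's sharper chain $e_1(X)\ge\tfrac{h_2(X)}{(1+d)\deg X}\ge\tfrac1{1+d}\sum_{i=1}^{d+1}e_i(X)$, of which Theorem \ref{Zhang} is the consequence obtained on keeping only $e_1$ and using $e_i(X)\ge0$.

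The crux is the arithmetic Hilbert--Samuel theorem: it is precisely this input that simultaneously supplies enough small sections to run the inductive sweep in the lower bound and pins their number down sharply enough to force the constant $1+\dim X$ --- and nothing larger --- in the upper bound, the adelic Minkowski theorem and the intersection estimate $h_2(\mathrm{div}(s))\le n\,h_2(X)$ being comparatively soft. This also explains why Theorem \ref{Zhang} is quoted here alongside the arithmetic B\'ezout theorem (Theorem \ref{AriBez}) rather than deduced from it: iterating B\'ezout to cut $X$ down to $\deg X$ points would only give a point of height $\le h_2(X)/\deg X+C$ with an irremovable additive error $C=C(m,\dim X)$, too weak for the clean bound above. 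A last technical caveat is that $X$ may be singular, so one must either invoke the extension of arithmetic Hilbert--Samuel to the singular case or pass to a resolution --- the semipositivity of the $\ell^2$-metric being what keeps either route open.
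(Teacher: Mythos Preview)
The paper does not prove Theorem \ref{Zhang} at all: it is stated as a quotation of Zhang's result, with the single line ``The following result is due to Zhang \cite{Zhang95}, Theorem 5.2'' serving in place of a proof. So there is no ``paper's own proof'' to compare against --- the theorem is used as a black box throughout.

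Your proposal, by contrast, is a genuine sketch of why the inequality holds, and it follows the standard route (essentially Zhang's own): arithmetic Hilbert--Samuel gives the asymptotic count of small sections, adelic Minkowski converts this into the existence of sections of prescribed sup-norm, and the two bounds then fall out by induction on dimension (lower bound) and by the observation that a very small section forces large height off its zero locus (upper bound). The key steps --- the weighted-average argument picking a component $Z$ with $h_2(Z)/\deg Z\le h_2(X)/\deg X$, and the contradiction from a dense set of small-height points lying in $\{s=0\}$ --- are correctly identified. Your closing remarks on why arithmetic B\'ezout alone would leave an additive error, and on the singular case, are apt. As a sketch this is sound; to make it a proof you would need to pin down the exact version of arithmetic Hilbert--Samuel being invoked (in particular its validity for the normalized height $h_2$ with the $\ell^2$-metric at the archimedean places, and for possibly singular $X$), and to justify carefully the inequality $h_2(\mathrm{div}(s))\le n\,h_2(X)$ for $\|s\|_{\sup}\le 1$ in that setting. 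But none of this is expected here: in the paper the theorem is simply cited, and your write-up already goes well beyond that.
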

We also define a different essential minimum for subvarieties of $E^N$, relative to the height function $\hat h$:
\[
 \hat\mu(X)=\inf\{\theta\in\R\mid\{P\in X\mid \hat h(P)\leq\theta\}\text{ is Zariski dense in }X\}.
\]
Using the definitions and a simple limit argument, one sees that Zhang's inequality holds also with $\hat{\mu}$, namely 
\begin{equation}\label{Zhangh^}\hat\mu(X)\leq\frac{h(X)}{\deg X}\leq(1+\dim X)\hat\mu(X).\end{equation}

If $X$ is an irreducible subvariety in $E^N$, using Proposition \ref{confrontoaltezze} we  have
\begin{equation}\label{mu2mu^}
-N\cquattro(E)\leq \mu_2(X)-\hat\mu(X)\leq  N\ccinque(E)
\end{equation}
where the  constant $C(E)$ is defined in Proposition \ref{confrontoaltezze}.


\subsection{Complex Multiplication}\label{CM}
 We denote by $\rend$ the ring of endomorphisms of $E$. We recall that an elliptic curve $E$   is non  CM if $\mathrm{End}(E)$ is isomorphic to $\mathbb{Z}$, while $E$ is CM if $\mathrm{End}(E)$ is isomorphic to an order in  $K=\Q(\sqrt{D})$, for some squarefree negative integer $D$.   In particular,  $\mathrm{End}(E)$ is isomorphic to $\mathbb{Z}+\mathfrak{f}\mathcal{O}_K$ where $\mathcal{O}_K$ is the ring of integers of $K$ and $\mathfrak f^2$ divides $D_K$, the discriminant of $K$.

 If $D$ is congruent to 1 modulo 4, then  $D_K=D$  and $\mathcal{O}_K=\Z[(1+\sqrt D)/2]$, so $\mathfrak f=1$ and $\rend=\mathbb{Z}[(1+\sqrt D)/2]$. If $D$ is not congruent to 1 modulo 4, then $D_K=4D$ and $\mathfrak f\in\{1,2\}$: if $\mathfrak f=1$ then $\rend=\Z[\sqrt D]$, while if $\mathfrak f=2$ then $\rend=\Z[2\sqrt D]$.
In all cases $\rend=\Z[\tau]$ for the above specified $\tau$ and we say that $E$ has CM by $\tau$.

\subsection{The adelic Minkowski Theorem} 

In this section we follow the notations and  arguments of \cite{BG06} Appendix 6. The adelic Minkowski Theorem has been  proved by Bombieri and Vaaler \cite{BomVal}  Theorem 3, where they use it to  prove Siegel's Lemma over number fields.

Let $K$ be an imaginary quadratic number field with ring of integers $\mathcal{O}_K$ and let $r$ and $N$ be positive integers. A $K$-lattice  $\Lambda$ of rank $r$  is an $\mathcal{O}_K$-module of rank $r$ such that $\Lambda \otimes _{\mathcal{O}_K}K$ is a $K$-vector space of dimension $r$ (see \cite{BG06}, Definition C.2.5)

 Consider a matrix  $M\in\mathrm{Mat}_{r\times N}(\rend)$ of rank $r$. Then  the rows of $M$ generate an $\mathcal{O}_K$-module and so a $K$-lattice $\Lambda\subset  \rend^N\subset  K^N$. We define the determinant of $\Lambda$ as $\det\Lambda=\sqrt{\det(M\overline{M}^t)}$, where $\overline{M}^t$ is the transpose of the complex conjugate  of $M$. This is also the covolume of a fundamental domain of $\Lambda$. 


The following is a special case of the adelic  Minkowski second theorem proved in \cite{BomVal}  Theorem 3 and also in \cite{BG06} Theorem C.2.11 and  Section C.2.18. 
\begin{thm}\label{Minko} Let $K$ be an imaginary quadratic field with discriminant $D_K$ and $\Lambda$ a $K$-lattice of rank $r$ as defined above. Then there exist generators $u_1,\dots u_r$ of $\Lambda$ with  euclidean norm $||u_i||=\lambda_i$ such that  
\begin{equation*}
 \omega_{2r} (\lambda_1\dotsm\lambda_r)^2 \leq 2^{r}  |D_{K}|^{r/2} (\det \Lambda)^2,
\end{equation*}
where
$
 \omega_{2r}={\pi^{r}}/{r!}
$
is the volume of the unit ball in $\R^{2r}$.
\end{thm}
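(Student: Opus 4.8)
The plan is to obtain this as the specialisation to an imaginary quadratic $K$, and to the euclidean norm, of the adelic second theorem of Minkowski of Bombieri and Vaaler (\cite{BomVal}, Theorem 3; see also \cite{BG06}, Theorem C.2.11 and Section C.2.18), reducing it to the classical Minkowski second theorem over $\Z$. Since $K$ is imaginary quadratic it has a single archimedean place, which is complex; I would fix the corresponding embedding $K\hookrightarrow\C$, so that $K^N\hookrightarrow\C^N=\R^{2N}$ carries the euclidean norm appearing in the statement. The $\mathcal{O}_K$-module $\Lambda$ spans an $r$-dimensional $K$-subspace $V$ of $K^N$, and $V\otimes_{\Q}\R$ is a real $2r$-dimensional subspace $W\subset\R^{2N}$ in which $\Lambda$, viewed as a $\Z$-module, sits as a full lattice $\Lambda_\Z$ of rank $2r$.

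First I would compute the covolume of $\Lambda_\Z$ in $W$. Writing $\mathcal{O}_K=\Z[\tau]$ and using that $\Lambda$ is generated over $\mathcal{O}_K$ by the rows $v_1,\dots,v_r$ of $M$, one has $\Lambda_\Z=\bigoplus_{i=1}^r(\Z v_i\oplus\Z\tau v_i)$; a $\C$-linear isometry from the $\C$-span of the $v_i$ onto $\C^r$ replaces $M$ by a square matrix $M'$ with $M'\overline{M'}^{\,t}=M\overline{M}^{\,t}$, so that $|\det M'|^2=\det(M\overline{M}^{\,t})=(\det\Lambda)^2$. Since $\mathcal{O}_K$ embeds in $\C$ with covolume $\tfrac12|D_K|^{1/2}$, the lattice $\mathcal{O}_K^{\,r}$ has covolume $2^{-r}|D_K|^{r/2}$ in $\C^r$, and transporting by $M'$ multiplies volumes by $|\det M'|^2$; hence
\[
\mathrm{covol}(\Lambda_\Z)=2^{-r}|D_K|^{r/2}(\det\Lambda)^2 .
\]
Applying the classical Minkowski second theorem to $\Lambda_\Z$ and to the euclidean unit ball of $W\cong\R^{2r}$, of volume $\omega_{2r}=\pi^r/r!$, then gives for its successive minima $\nu_1\le\cdots\le\nu_{2r}$ the bound $\omega_{2r}\,\nu_1\cdots\nu_{2r}\le 2^{2r}\,\mathrm{covol}(\Lambda_\Z)=2^{r}|D_K|^{r/2}(\det\Lambda)^2$.

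Finally I would pass from the $2r$ real minima to $r$ generators. Let $\lambda_i$ be the least $R$ for which the euclidean ball of radius $R$ contains $i$ vectors of $\Lambda$ linearly independent over $K$; since any $2i-1$ vectors of $\Lambda_\Z$ that are $\R$-linearly independent span a $K$-subspace of dimension at least $i$, one gets $\lambda_i\le\nu_{2i-1}\le\nu_{2i}$, hence $(\lambda_1\cdots\lambda_r)^2\le\nu_1\cdots\nu_{2r}$ and therefore $\omega_{2r}(\lambda_1\cdots\lambda_r)^2\le 2^{r}|D_K|^{r/2}(\det\Lambda)^2$. That one may in fact choose \emph{generators} $u_1,\dots,u_r$ of the $\mathcal{O}_K$-module $\Lambda$, and not merely $K$-independent elements, with $\|u_i\|=\lambda_i$ still satisfying this inequality is precisely the content of the Bombieri--Vaaler form of the theorem; since in the situation of interest $\Lambda$ is free of rank $r$, this last step can also be carried out by a standard basis-selection argument starting from a reduced $\Z$-basis of $\Lambda_\Z$. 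I expect the delicate points to be the normalisation of the covolume — the factor $2^{-r}|D_K|^{r/2}$ coming from the complex embedding of $\mathcal{O}_K$ — together with the exact constant $\omega_{2r}=\pi^r/r!$, since these pin down the constant $2^{r}|D_K|^{r/2}$ in the final estimate; the only genuinely non-formal ingredient is the passage from successive minima to an actual generating set, which is exactly why one invokes the adelic statement rather than re-deriving it.
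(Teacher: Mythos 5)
Your proposal is correct and follows essentially the same route as the paper: view $\Lambda$ as a rank-$2r$ real lattice via the complex embedding, compute its covolume through $\mathrm{covol}(\mathcal{O}_K)=\tfrac12|D_K|^{1/2}$ scaled by $(\det\Lambda)^2$, apply the classical second theorem of Minkowski with the ball volume $\omega_{2r}$, and pass from the $2r$ real minima to $r$ vectors of $\Lambda$ using $\lambda_i\le\nu_{2i-1}$. The final extraction of an actual $\mathcal{O}_K$-generating set, which you delegate to Bombieri--Vaaler or a basis-selection argument, is handled at the same level of detail in the paper itself.
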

\begin{proof}
To deduce this version from  \cite{BomVal}  Theorem 3 or  \cite{BG06} Theorem C.2.11, we 
note that $\Lambda\otimes_{\mathcal{O}_K}K$ makes $\Lambda$ a full $K$-lattice as defined by Bombieri and Vaaler. In addition $\Lambda$ is embedded in $\rend^N$ and so in $K^N$. Then it remains to estimate ${\rm Vol}(S)$. This is done 
 in \cite{BG06} C.2.18.
We choose the standard $\Z$-basis  of $\mathcal{O}_K$ and then  identify $K^r$ with $\qe^{2r}$ and in turn see $\Lambda$ embedded in  $K^N$ and $\qe^{2N}$. Then our $K$-lattice $\Lambda$ may be viewed as an $\mathbb{R}$-lattice $\Lambda_\infty$ of rank $2r$ embedded in  $\mathbb{R}^{2N}$. 

The classical Minkowski Theorem tells us that  there exist generators $v_i$ for $i=1,\dots,2r$ of $\Lambda_\infty$ (defining the classical successive minima) such that

$$ \omega_{2r} ||v_1||\dots ||v_{2r}|| \leq 4^{r}  (\mathrm{vol} \Lambda_\infty).$$

A computation shows that $\mathrm{vol}(\Lambda_{\infty})\le (\mathrm{vol}\mathcal{O}_K)^r \mathrm{vol}\Lambda^2$, where $\mathcal{O}_K$ is considered as a $\qe$-lattice of rank $2$.  In addition $\mathrm{vol}\mathcal{O}_K=\frac{|D_{K}|^{1/2}}{2}$   by classical results and $ \mathrm{vol}\Lambda=\det \Lambda$ by definition.
 Finally one can extract  from the $v_i$ an $\mathcal{O}_K$-basis $u_1,\dots u_r$ of $\Lambda$ such that $||u_i||\le ||v_{2i-1}||$. So $(\lambda_1\dotsm\lambda_r)^2\leq ||v_1||\dots ||v_{2r}||$ that together with the above bounds gives the theorem.
\end{proof}

Note that, from the proof, one sees that   $||u_i||=\lambda_i$ are the successive minima of $\Lambda$, where the  definition is as follows. Let  $S$ be the unitary complex ball of $\mathbb{C}^{r}$ with respect to the Lesbegue measure.  The $i$-th successive minimum  is 
\[
 \lambda_i=\inf\{\lambda \geq 0 \mid \lambda\cdot S \text{ contains }i\text{ linearly independent vectors of }\Lambda  \}.
\]
This definition is equivalent to the one given in \cite{BG06} C.2.9 as explained in \cite{BomVal} (3.2).

\subsection{Algebraic Subgroups}\label{prelim-S}
In this subsection we recall the several different descriptions of the  algebraic subgroups of $E^N$.

By  the uniformization theorem there exists a unique lattice $\Lambda_0 \subset  \C$ such that the map $\C \rightarrow \PP^2(\C )$ given by
\[  z \longmapsto \left\{ \begin{array}{lll}
[\wp_{\Lambda_0}(z) : \wp'_{\Lambda_0} (z) : 1 ] & , & z \not\in {\Lambda_0} \\{}
[0:1:0] &, & z \in {\Lambda_0}
\end{array}    \right. \]
induces an isomorphism $\C  / {\Lambda_0} \stackrel{\sim}{\rightarrow} E(\C ) \subset  \PP^2(\C )$ of complex Lie groups (see~\cite[Theorem VI.5.1]{SilvermanArithmeticEllipticCurves}. Here $\wp_{\Lambda_0}$ denotes the Weierstrass $\wp$-function associated to the lattice ${\Lambda_0}$. The $N$-th power of this isomorphism gives the analytic uniformization
\[ \wp_e: \C ^N / {\Lambda_0}^N \stackrel{\sim}{\rightarrow} E^N(\C ) \]
of $E^N$. Note that $\C ^N$ can be identified with the Lie algebra  of $E^N(\C )$ and the composition of the canonical projection $\pi: \C ^N \rightarrow \C ^N / {\Lambda_0}^N$ with $\wp_e$ can be identified with the exponential map of the Lie group $E^N(\C )$. By~\cite[8.9.8]{BG06}, the set of abelian subvarieties of $E^N$ is in natural bijection with the set of complex vector subspaces $W \subset  \C ^N$ for which $W \cap {\Lambda_0}^N$ is a lattice of full rank in $W$. Such a $W$ corresponds to the abelian subvariety $B$ of $E^N$ with $B(\C ) = (\wp_e \circ \pi)(W)$ and we  identify $W$ with the Lie algebra of $B$.

The \emph{orthogonal complement} of an abelian subvariety $B \subset  E^N$ with Lie algebra $W_B \subset  \C ^N$ is the abelian subvariety  $B^\perp$ with Lie algebra $W^\perp_B$ where $W^\perp_B$ denotes the orthogonal complement of $W_B$ with respect to the canonical Hermitian structure of $\C ^N$. Note that $W_B^\perp \cap {\Lambda_0}^N$ is a lattice of full rank in $W_B^\perp$ and its volume can be estimated using the Siegel Lemma over number fields of Bombieri and Vaaler \cite{BomVal}.

Let $B$ be an irreducible torsion variety of $E^N$ with $\codim\sotto=r$. By \cite{Masserwustholz} Lemma 1.3, $B$ is a component of an algebraic subgroup  $H$ of codimension $r$ and of degree $\le \deg B T_0$ where $T_0$ is  the number of components of $H$ and it is absolutely bounded. The  Lie algebra of the zero component of $H$ is the kernel of a linear map $\varphi_\sotto:\C^N \to \C^r$. We identify $\varphi_\sotto$ with the induced  morphism $\varphi_\sotto:E^N \to E^{r}$. Then  $\ker \varphi_\sotto=\sotto+\tau$ with $\tau$ a torsion set of  cardinality  $T_0$.
In turn $\varphi_\sotto$ is identified with a matrix in $\mathrm{Mat}_{r\times N}(\mathrm{End}(E))$ of rank $r$  and, using the geometry of numbers, we can choose the matrix   representing $ \varphi_\sotto$  in such a way that the degree of   $\sotto$ is essentially the product of  the squares of the norms  of the rows of the matrix.

More precisely, given $B\subset  E^N$ an algebraic subgroup of rank $r$, we associate it with a matrix in $\mathrm{Mat}_{r\times N}(\mathrm{End}(E))$ with rows  $u_1,\dotsc,u_r$ 
such that the euclidean norm $||u_i||$ of $u_i$ equals the $i$-th successive minimum of the $K$-lattice $\Lambda=\langle u_1,\dotsc, u_r\rangle_{\mathcal{O}_K}$  for $K=\mathrm{Frac}(\rend)$ and $\mathcal{O}_K$ its ring of integers. In Subsection \ref{sec-c6} we show that 
\begin{equation*}
 \deg B\leq 3^N N!\left(12^{N-1} 2\right)^r\prod_{i=1}^r||u_i||^2.
\end{equation*}

Combining this with Minkowski's Theorem recalled above we get
\begin{equation}\label{gradodet}
\frac{\deg B}{(\det \Lambda)^2}\leq 3^N N!\left(12^{N-1} 2\right)^r \frac{2^r|D_K|^{\frac{r}{2}}}{\omega_{2r}}.
\end{equation}

\section{Estimates for the degree of morphisms}\label{section3}

 In this section we give sharp bounds for the degree of an algebraic subgroup of $E^N$, where the embedding of $E^N$ in the projective space is fixed  as done in \eqref{embe}.  

An  algebraic subgroup $H$ of $E^N$ of codimension $s$ is defined by $s$ equations
\begin{align*}
L_1&(X_1,\dotsc,X_N)=0, \\
&\vdots\\ 
L_s&(X_1,\dotsc,X_N)=0
\end{align*}
{where $L_i(X_1,\ldots,X_N)=l_{i_1}X_1+\cdots+l_{i_N}X_N$ are  morphisms from $E^N$ to $E$; here  the  coefficients $l_{i_j}\in \mathrm{End}(E)$  are expressed by certain rational functions; similarly the $+$ that  appears in this expression is the addition map in $E^N$, which is expressed by a rational function of the coordinates.

More precisely, if the $X_i$'s are all points on $E$ with {affine} coordinates $(x_i,y_i)$ in $\P_2$, then $L_i(\mathbf{X})$ are also points on $E$ with coordinates in $\P_2$ $(x(L_i(\mathbf{X})), y(L_i(\mathbf{X})))$ which are rational functions of the  $x_i$'s and $y_i$'s.

The purpose of this section is to study the rational functions $x(L_i(\mathbf{X}))$ and  to bound the sums of {\bf their} partial degrees.

\subsection{Estimates for degrees of rational functions}\label{secpolinomi}
 Recall that if $\frac{f_i}{g_i}$ are rational functions, with $f_i,g_i$ polynomials in several variables, we denote by $d(f_i/g_i)$ the maximum of the sums of the partial degrees of both $f_i,g_i$. Then
\begin{align}
\label{stimaprodottofunzioni}\frac{f}{g}&=\prod_{i=1}^r\frac{f_i}{g_i}  & d(f/g)&\leq \sum_{i=1}^r d(f_i/g_i)\\
\label{stimasommafunzioni}\frac{f}{g}&=\sum_{i=1}^r\frac{f_i}{g_i} & d(f/g)&\leq \sum_{i=1}^r d(f_i/g_i) 
\end{align}
where $d(f/g)$ is the bound for the sum of partial degrees of the product  (in \eqref{stimaprodottofunzioni}) and of the sum (in \eqref{stimasommafunzioni}) of the $f_i/g_i$'s respectively.

\subsection{Estimates for endomorphisms }\label{secmoltiplicazionem}

In this section we are going to estimate the degree of the rationa functions on $E$ giving the mulitplication by an element of $\rend$. Recall that if $E$ has CM by $\tau$ and $\alpha\in \mathrm{End}(E)$, then $\alpha=m+n\tau$ for some integers $m,n$. We denote by $|\cdot |$ the complex absolute value.  Notice that since $\alpha$ is an algebraic integer then $|\alpha|\geq 1$ for every $\alpha\neq 0$.

 From the  theory of elliptic functions we can deduce the following result. This will be used, together with other estimates for the  non CM case,  to bound the degree of an algebraic subgroup.
\begin{lem}\label{boundmultalfa} 

Let $P=(x,y)$ be a point in $E$ and let $\alpha\in \mathrm{End(E)}$. Then
$$\alpha(x,y)=\left(f(x),\frac{y f'(x)}{\alpha}\right)$$
with $f$ a rational function with partial degrees bounded by $|\alpha|^2$. 

In particular the coordinates of $\alpha(x,y)$ are rational functions of $x$ and $y$ whose sums of the partial degrees are upper bounded by $2|\alpha|^2$ and lower bounded by $|\alpha|^2$.
\end{lem}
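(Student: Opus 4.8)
The plan is to use the classical theory of elliptic functions, specifically the fact that an endomorphism $\alpha$ of $E$ acts on the $\wp$-function via a rational function of $\wp$. Working analytically, if $z$ is the uniformizing parameter of $E(\C)=\C/\Lambda_0$ and $P=(x,y)=(\wp(z),\wp'(z))$, then $\alpha(P)$ corresponds to the point with parameter $\alpha z$, and $\wp(\alpha z)$ is an even elliptic function with respect to $\Lambda_0$ whose only poles are at the $\alpha$-division points of the lattice (the points $z$ with $\alpha z \in \Lambda_0$). Hence $\wp(\alpha z) = f(\wp(z))$ for a rational function $f$, and $\wp'(\alpha z) = \frac{d}{dz}f(\wp(z)) = f'(\wp(z))\wp'(z)\cdot\frac{1}{\alpha}$ (the chain rule giving $\alpha z$ differentiated with respect to $z$), which already explains the shape $\alpha(x,y) = (f(x), yf'(x)/\alpha)$ claimed in the statement.

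First I would pin down the degree of $f$. The key point is that $\alpha$ as an isogeny $E\to E$ has degree equal to $\deg(\alpha) = \alpha\bar\alpha = |\alpha|^2$ (the norm of $\alpha$ in $\rend$), which equals the number of points in $\ker\alpha$. The rational map $x \mapsto f(x)$ on the $x$-line $\P^1$ (the quotient of $E$ by $[-1]$) then has degree equal to the degree of $\alpha$ on $E$ divided by the appropriate factor; concretely, since $x = \wp(z)$ is a degree-$2$ map $E\to\P^1$ and $x\circ\alpha$ factors as $f\circ x$, counting preimages of a generic point shows $\deg f = |\alpha|^2$. I would make this precise by noting that $f(\wp(z)) = \wp(\alpha z)$ has, in a fundamental domain, exactly $|\alpha|^2$ poles counted with multiplicity (one double pole at each of the $|\alpha|^2$ points of $\alpha^{-1}(\Lambda_0)/\Lambda_0$, but grouped in pairs $\pm z$ by evenness, so $|\alpha|^2/2$ poles of $\wp(\alpha z)$ viewed as a function of $x=\wp(z)$, wait — more carefully: as a rational function of $x$, the numerator and denominator degrees are each at most $|\alpha|^2$). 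The cleanest route is: $f$ has degree $|\alpha|^2$ as a map $\P^1\to\P^1$, so writing $f = f_1/f_2$ in lowest terms, $\max(\deg f_1,\deg f_2) = |\alpha|^2$, giving the partial-degree bound $|\alpha|^2$ on $f$ as asserted.

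For the "in particular" clause I would simply combine the above. The first coordinate $f(x)$ has sum of partial degrees (in $x$ and $y$) equal to $\deg f = |\alpha|^2$, and this also gives the lower bound $|\alpha|^2$. The second coordinate $yf'(x)/\alpha$ is a rational function in $x,y$; since $f'$ is the derivative of a degree-$|\alpha|^2$ rational function, $d(f') \le 2|\alpha|^2$ roughly (the derivative of $f_1/f_2$ is $(f_1'f_2-f_1f_2')/f_2^2$, whose numerator and denominator have degree at most $2|\alpha|^2$), and multiplying by $y$ adds $1$ in the $y$-degree, which is absorbed; the scalar $1/\alpha$ does not affect degrees. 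So the sum of partial degrees of the second coordinate is at most $2|\alpha|^2$, and the overall bound $2|\alpha|^2$ for the coordinates of $\alpha(x,y)$ follows, with $|\alpha|^2$ as a lower bound coming from the first coordinate (or from the fact that $\alpha$ is an isogeny of degree $|\alpha|^2$, so its coordinate functions cannot have degree smaller than that).

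I expect the main obstacle to be bookkeeping the exact constant in the degree of $f$ and making rigorous the passage from "elliptic function with prescribed poles" to "rational function of $x$ of controlled degree", i.e. correctly accounting for the factor of $2$ coming from the $2:1$ cover $E\to\P^1$ and the evenness of $\wp(\alpha z)$. One must be careful that the poles of $\wp(\alpha z)$ in the $z$-variable occur at $\alpha^{-1}\Lambda_0/\Lambda_0$, a set of size $|\alpha|^2$, and that passing to the $x$-variable the count of poles (with multiplicity) of $f$ is exactly $|\alpha|^2$ — this is where the relation $\deg(\alpha)=|\alpha|^2$ for a CM endomorphism (equivalently, $N_{K/\Q}(\alpha)$) is used. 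Everything else — the chain-rule computation for the $y$-coordinate, the elementary degree estimates \eqref{stimaprodottofunzioni}–\eqref{stimasommafunzioni} — is routine.
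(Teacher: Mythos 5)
Your proof is correct and follows essentially the same route as the paper's: both derive the shape $\alpha(x,y)=\left(f(x),\,y f'(x)/\alpha\right)$ from the analytic uniformization ($\wp(\alpha z)=f(\wp(z))$ plus the chain rule, the paper adding only a short oddness argument to kill the cross terms) and then bound the partial degrees of $f$ and $yf'$ by $|\alpha|^2$ and $2|\alpha|^2$. The one cosmetic difference is how $\deg f=|\alpha|^2$ is justified: the paper cites Theorem 10.14 of Cox to get $\deg a=\deg b+1=|\alpha|^2$ for $f=a/b$, while you deduce $\deg f=|\alpha|^2$ from multiplicativity of degrees in $x\circ\alpha=f\circ x$ together with $\deg\alpha=\alpha\bar\alpha=|\alpha|^2$, which is equally valid and suffices for both the upper and lower bounds.
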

\begin{proof}
Let $\alpha\in\mathrm{End}(E)$. Then 
$$\alpha(x,y)=(f(x)+yg(x),h(x)+y l(x))$$
for $f,g,h,l$ rational functions. Since the multiplication by $\alpha$ is an isogeny on $E$ we have that $\alpha(-P)=-\alpha P$ for every $P\in E$, thus
\[\alpha(x,-y)=(f(x)-yg(x),h(x)-yl(x))=-\alpha(x,y)=(f(x)+yg(x),-h(x)-yl(x))\]
from which we deduce $g=h=0$.

 Notice that if $(x,y)=(\wp (z), \wp'(z))$, with $\wp$ the Weiertrass $\wp$-function, then $\alpha(x,y)=(\wp (\alpha z),  \wp'(\alpha z))$.
So we obtain  $\alpha(x,y)=\left(f(x),\frac{y f'(x)}{\alpha}\right)$ where $f$ is a rational function. Writing $f(x)=\frac{a(x)}{b(x)}$ with $a,b$ polynomials, by Theorem 10.14 of \cite{Cox}, we have that $\deg a(x)=\deg b(x)+1=|\alpha|^2$.
 Therefore the  sums of the partial degrees of $f(x)$ and $yf'(x)$ are smaller than $2|\alpha|^2$
and bigger than $|\alpha|^2$.
\end{proof}

\subsection{Estimates for group morphisms}\label{eq_sub} 
We first estimate the degree of an algebraic subgroup of codimension one, thus defined by one linear equation. Then we generalize the estimates to algebraic subgroups of any codimension, thus defined by several linear equations.

Consider  first $M$ points on $E$, $P_1=(x_1,y_1), P_2=(x_2,y_2),\ldots, P_M=(x_M,y_M)$ and let $P_{M+1}=(x_{M+1},y_{M+1})=P_1\oplus P_2\oplus\ldots \oplus P_M$ be their sum. 
Then $x_{M+1}$ and $y_{M+1}$ are certain explicit rational functions  of the coordinates  $x_i,y_i$ for $i=1,\ldots,M$. 

 We proved in \cite{ExpTAC},  Section 6.3.1, that if the points $(x_i,y_i)$, for $i=1,\ldots,M$ have coordinates given by  some rational functions in certain variables,  whose sums of the partial degrees  are bounded by $d_i$, then the  sum $d$ of the partial degrees   of the functions $x_{M+1},y_{M+1}$ in the variablesf $x_i,y_i$ is bounded as
\begin{align}
\label{sommaNpuntigrado}d&\leq 12^{M-1} d_1 + \sum_{i=2}^{M} 12^{M-i+1}d_i\leq 12^{M-1} \sum_{i=1}^{M}d_i.
\end{align}

Let us consider the {morphism $L:E^N\rightarrow E$ defined as} 
\begin{equation*}
L(\mathbf{X})=l_1X_1+\dotsb+l_NX_N
\end{equation*}  
{where $l_i\in \mathrm{End}(E)$ and $X_i=(x_i,y_i)$ is in the $i$-th factor of $E^N$.} Then $L(\mathbf{X})$ is also a point on $E$ with coordinates $(x(L(\mathbf{X})), y(L(\mathbf{X})))$ that are rational functions in the coordinates $(x_i,y_i)$ of all $X_i$'s. We want to  bound the sum of the partial degrees of the rational function $x(L(\mathbf{X}))$.

Let us set $d(L)=d(x(L(\mathbf{X})))$ to be the sum of the partial degrees in the numerator and denominator of $x(L(\mathbf{X}))$.

Now combining inequality \eqref{sommaNpuntigrado} with the bounds from Subsection \ref{secmoltiplicazionem} we obtain 
\begin{equation}\label{combinazioneNpuntigrado}
d(L)\leq12^{N-1}2\left(\sum_{i=1}^N |l_i|^2\right).
\end{equation}

\section{Estimates for degree and height of a translate}\label{proofmain}
In this section we give explicit bounds for the degree of a translate $H+P$ in $E^N$ in terms of the  coefficients of the linear forms defining $H$ and we bound  its height in terms of the height of $P$. 


For a vector, we denote by $||\cdot||$ its euclidean norm.

\subsection{A bound for the degree}\label{sec-c6} 
To bound the degree of a translate, we do the same construction of \cite{ExpTAC},  recalled here for clarity, and we use the explicit bounds computed above for the CM case.
We first consider an  algebraic subgroup given by a single equation in $E^N$.
Then  we apply the Segre embedding and see  this subgroup as a subvariety of $\P_{3^N-1}$. In doing this we must be careful in selecting irreducible components. Finally we apply inductively B\'ezout's theorem for the case of several equations.

\medskip

Let $U$ be a matrix in $\mathrm{Mat}_{s\times N}(\rend)$ with rows $\mathbf{u}_1,\dotsc,\mathbf{u}_s\in \rend^N$ and let $H\subset  E^N$ be an irreducible component of the algebraic subgroup associated with the matrix $U$ (see Subsection \ref{prelim-S}).

If $X_1=(x_1,y_1),\ldots, X_N=(x_N,y_N)$ are points on $E$ and  $\mathbf{v}=(v_1,\ldots,v_N)\in \rend^N$ is a vector, we denote $\mathbf{v}(\mathbf{X})=v_1 X_1+\ldots+v_N X_N$.

As remarked in the previous section, $\mathbf{v}(\mathbf{X})=(x(\mathbf{\mathbf{v}}(\mathbf{X})), y(\mathbf{v}(\mathbf{X})))$ is a point in $E$ and $x(\mathbf{v}(\mathbf{X}))$ is a rational function of the $x_i,y_i$'s.

Let now $P=(P_1,\ldots,P_N)\in E^N$ be a point.
Take the $k$-th row $\mathbf{u}_k\in \rend^N$ of $U$  and consider the equation
\[x(\mathbf{u}_k(\mathbf{X}))=x(\mathbf{u}_k(P))\]
with $\mathbf{X}=(X_1,\ldots,X_N)\in E^N$ as before.

Clearing out the denominators the previous equation can be written as \[f_{\mathbf{u}_k,P}(x_1,y_1,\dotsc,x_N,y_N)=0,\]
where $f_{\mathbf{u}_k,P}$ is a polynomial of degree bounded by $d(\mathbf{u}_k)$ (see formula \eqref{sommaNpuntigrado})  which defines a variety in $\P_2^N$. Applying the Segre embedding, we want to study this variety as a subvariety of $\P_{3^N-1}$.

The Segre embedding induces a morphism between the fields of rational functions, whose effect on the polynomials in the variables $(x_1,y_1,\dotsc,x_N,y_N)$ is simply to replace any monomial in the variables of $\P_2^N$ with another monomial in the new variables, without changing the coefficients; the total degree in the new variables is the sum of the partial degrees in the old ones.

Recall that in Section \ref{SezioneAltezze} we defined $X(E,N)$ as the image of $E^N$ in $\P_{3^N-1}$.

\medskip

Denote by  $Y'_{k}\subset  \P_{3^N-1}$ the zero-set of the polynomial  $f_{\mathbf{u}_k,P}(x_1,y_1,\dotsc,x_N,y_N)$ after embedding $\P_2^N$ in $\P_{3^N-1}$. 

Now consider an irreducible component of the translate in $E^N$  defined by \[\mathbf{u}_k(\mathbf{X})=\mathbf{u}_k(P)\] and denote by $Y_k$ its image  in $\P_{3^N-1}$. We want to  bound the degree of the hypersurfaces $Y_k$.
Notice that 
\[Y_k\subset  Y'_k\cap X(E,N)\] and it is a component.
This is because setting the first coordinate of $\mathbf{u}_k(\mathbf{X})$ equal to $x(\mathbf{u}_k(P))$ defines two cosets, $\mathbf{u}_k(\mathbf{X})=\mathbf{u}_k(P)$ and $\mathbf{u}_k(\mathbf{X})=-\mathbf{u}_k(P)$.

By B\'ezout's theorem
\begin{align*}
 \deg Y_k\leq \deg X(E,N)\deg{ Y'_k}\leq 3^N N! 12^{N-1} 2||\mathbf{u}_k||^2,
\end{align*}
where the last inequality follows from formula \eqref{combinazioneNpuntigrado}.

\smallskip

In a similar way, considering all the rows we get
\begin{equation}\label{gradorighe}
\deg (H+P)\leq  \deg X(E,N)\deg Y'_1\dotsm\deg Y'_s\leq 3^N N! \left(12^{N-1} 2\right)^s\prod_{i=1}^s||\mathbf{u}_i||^2.
\end{equation}

\subsection{A bound for the height}\label{subsec-alttrasl}
 The aim of this section is to prove the following:
\begin{propo}\label{stimaaltezzanormalizzatatraslato}
Let $E$ be an elliptic curve with CM by $\tau$.  Set $K=\Q(\tau)$, let $\mathcal{O}_K$ be its ring of integers and $D_K$ its discriminant.
Let $P\in E$ be a point. Let $H\subset  E^N$ be a component of the  algebraic subgroup of codimension $s$ associated with an $s\times N$ matrix with rows $\mathbf{u}_1,\dotsc,\mathbf{u}_s\in \rend^N$  such that  $||\mathbf{u}_i||$ is the $i$-th successive minimum of the  $K$-lattice $\Lambda=\langle \mathbf{u}_1,\dotsc, \mathbf{u}_s\rangle_{\mathcal{O}_K}$.   Then 
\begin{equation*}
h_2(H+P)\leq C_1(N,s)\prod_{i=1}^s ||\mathbf{u}_i||^2\left(\frac{2^N|D_K|^{\frac{N}{2}}}{\omega_{2(N-s)}\omega_{2s}} \sum_{i=1}^s\frac{\hat h(\mathbf{u}_i(P))}{||\mathbf{u}_i||^2}+\ccinque(E)\right).
\end{equation*}
where 
$$C_1(N,s)= N(N-s+1)3^N N! \left(12^{N-1}2\right)^s,$$
$\omega_{2n}=\pi^n/n!$  and $\ccinque(E)$ is defined in Proposition \ref{confrontoaltezze}.
\end{propo}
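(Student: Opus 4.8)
\textbf{Proof plan for Proposition \ref{stimaaltezzanormalizzatatraslato}.}

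The plan is to bound $h_2(H+P)$ by realizing $H+P$ as an intersection inside $\P_{3^N-1}$ and applying the Arithmetic B\'ezout Theorem (Theorem \ref{AriBez}) iteratively, exactly as the degree bound \eqref{gradorighe} was obtained, but now tracking heights as well as degrees. Concretely, for each row $\mathbf{u}_k$ one forms the polynomial $f_{\mathbf{u}_k,P}$ obtained by clearing denominators in $x(\mathbf{u}_k(\mathbf{X}))=x(\mathbf{u}_k(P))$ and passing through the Segre embedding; its zero set $Y'_k$ is a hypersurface whose degree is $\leq 12^{N-1}2\|\mathbf{u}_k\|^2$ by \eqref{combinazioneNpuntigrado} and whose height $h_2(Y'_k)$ must be estimated in terms of the Weil height of the coefficients of $f_{\mathbf{u}_k,P}$, hence in terms of $h_W(\mathbf{u}_k(P))$ and the degree. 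Then $H+P$ is a component of $X(E,N)\cap Y'_1\cap\dots\cap Y'_s$, and applying Theorem \ref{AriBez} $s$ times — at each stage multiplying the previous height by a new degree and adding the product of the degree of the partial intersection with $h_2(Y'_k)$, plus the B\'ezout constant $\csei$ times the product of degrees — yields a bound of the shape
\begin{equation*}
h_2(H+P)\le c(N,s)\Big(\prod_{i=1}^s\|\mathbf{u}_i\|^2\Big)\Big(\sum_{i=1}^s \frac{h_W(\mathbf{u}_i(P))+(\text{error})}{\|\mathbf{u}_i\|^2}+(\text{const depending on }N,E)\Big),
\end{equation*}
where the combinatorial factor $c(N,s)$ absorbs $3^NN!$, the powers of $12^{N-1}2$, the telescoping of the $s$ B\'ezout steps (this is where the $N(N-s+1)$ and the $\csei$-type logarithmic terms get swept into $C_1(N,s)$ and into $C(E)$), and $\deg X(E,N)=3^NN!$ from \eqref{degSegre}.

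The second main ingredient is the passage from the Weil height $h_W(\mathbf{u}_i(P))$ of the point $\mathbf{u}_i(P)\in E$ to its N\'eron-Tate height $\hat h(\mathbf{u}_i(P))$. This is where Proposition \ref{confrontoaltezze} (and the $h_2$ versus $h_W$ comparison) enters: one writes $h_W(\mathbf{u}_i(P))\le \hat h(\mathbf{u}_i(P))+C(E)$ (up to the harmless difference between $h_2$ and $h_W$ at archimedean places, which only affects the additive constant), so that the $\ccinque(E)$ term in the statement is precisely this comparison constant, while the $h_2$ normalisation is what makes the archimedean contribution to the height of $f_{\mathbf{u}_k,P}$ tractable. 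One should also note that $\hat h(\mathbf{u}_i(P))\le \|\mathbf{u}_i\|^2\,\hat h(P)$ is not used here — rather the point is to keep the dependence on $P$ only through the $\hat h(\mathbf{u}_i(P))$, since later (in Section \ref{proofmain} and beyond) these will be estimated using the geometry of numbers.

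The third ingredient is the replacement of $\prod\|\mathbf{u}_i\|^2$ and the stray $\pi$-powers by the determinant of the lattice via Minkowski: since $\|\mathbf{u}_i\|=\lambda_i$ are the successive minima of $\Lambda$, Theorem \ref{Minko} gives $\omega_{2s}(\lambda_1\cdots\lambda_s)^2\le 2^s|D_K|^{s/2}(\det\Lambda)^2$, and one uses this (together with the analogous estimate for the orthogonal complement lattice $\Lambda^\perp$, which has rank $N-s$ and discriminant comparable to that of $\Lambda$, producing the $\omega_{2(N-s)}$ and the full power $|D_K|^{N/2}$) to convert the bound into the stated form with the factor $\frac{2^N|D_K|^{N/2}}{\omega_{2(N-s)}\omega_{2s}}$. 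The main obstacle, and the part requiring genuine care rather than bookkeeping, is the iterated Arithmetic B\'ezout step: controlling how the height of the partial intersections $X(E,N)\cap Y'_1\cap\dots\cap Y'_k$ grows, ensuring at each stage that one picks out the correct irreducible component (the coset $\mathbf{u}_k(\mathbf{X})=\mathbf{u}_k(P)$ rather than its companion $\mathbf{u}_k(\mathbf{X})=-\mathbf{u}_k(P)$, as in the degree computation), and checking that the accumulated $\csei(\dim,\dim,3^N-1)$ terms and the factor $N-s+1$ genuinely fit inside the clean constant $C_1(N,s)$ claimed; this is the analogue for heights of the computation in \cite{ExpTAC} and is where the CM bounds of Section \ref{section3} (Lemma \ref{boundmultalfa} and \eqref{combinazioneNpuntigrado}) are inserted in place of the non-CM estimates.
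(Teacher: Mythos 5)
Your route is genuinely different from the paper's, and as written it has a real gap. The iterated Arithmetic B\'ezout scheme requires, at each step, an explicit bound for $h_2(Y'_k)$, i.e.\ for the height of the coefficients of the polynomial $f_{\mathbf{u}_k,P}$ obtained by clearing denominators in $x(\mathbf{u}_k(\mathbf{X}))=x(\mathbf{u}_k(P))$. Section \ref{section3} of the paper only bounds the \emph{degrees} of these rational functions (Lemma \ref{boundmultalfa} and \eqref{combinazioneNpuntigrado}); nothing in the paper, and nothing in your sketch, controls the heights of the coefficients of the iterated addition and multiplication-by-$\alpha$ formulas, and getting an explicit estimate of the shape $h_2(Y'_k)\ll \hat h(\mathbf{u}_k(P))+\lVert\mathbf{u}_k\rVert^2 C(E)$ with constants fitting inside $C_1(N,s)$ is precisely the hard, non-bookkeeping part of your plan --- you assert it rather than prove it. A second symptom that the argument is being reverse-engineered from the final formula: in your setup no lattice determinant ever appears, so there is nothing for Theorem \ref{Minko} to convert, and invoking ``the analogous estimate for the orthogonal complement lattice $\Lambda^\perp$'' has no mechanism --- $\Lambda^\perp$ plays no role in a B\'ezout intersection argument. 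Likewise the factor $N(N-s+1)$ does not come from telescoping B\'ezout steps.

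For comparison, the paper avoids all height bookkeeping on the defining equations. It introduces the orthogonal lattice $\Lambda^\perp$ and the subgroup $H^\perp$, writes $P=P_0+P^\perp$ with $P_0\in H$, $P^\perp\in H^\perp$, and bounds $\hat h(P^\perp)$ by solving the linear system $UP^\perp=(\mathbf{u}_1(P),\dots,\mathbf{u}_s(P),0,\dots,0)^t$ with the adjugate matrix and Hadamard's inequality; this is where the quotient $\lVert\mathbf{u}_1\rVert^2\cdots\lVert\mathbf{u}_N\rVert^2/|\Delta|^2$ and the factor $N$ arise. It then uses Philippon's orthogonality result \cite{preprintPhilippon}, $\hat\mu(H+P)=\hat h(P^\perp)$, together with Zhang's inequality (source of the factor $N-s+1$) and \eqref{mu2mu^} (source of $\ccinque(E)$), and only the \emph{degree} bound \eqref{gradorighe} for $H$. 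Minkowski's Theorem \ref{Minko}, applied to both $\Lambda$ and $\Lambda^\perp$ --- this is exactly why the hypothesis that the $\lVert\mathbf{u}_i\rVert$ are successive minima is in the statement --- produces the factor $2^N|D_K|^{N/2}/(\omega_{2(N-s)}\omega_{2s})$. If you want to salvage your approach you would have to supply the missing explicit height estimates for the coefficients of the group-law formulas, which is substantially more work than the paper's argument.
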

\begin{proof}



Let $\Lambda^\perp$ be the orthogonal  $K$-lattice of $\Lambda$ and let $\mathbf{u}_{s+1},\dotsc,\mathbf{u}_N$ be generators  of $\Lambda^\perp$, such that $||\mathbf{u}_{s+1}||,\dotsc,||{\mathbf{u}_N}||$ are the successive minima of $\Lambda^\perp$.

The $(N-s)\times N$ matrix with rows $\mathbf{u}_{s+1},\dotsc,\mathbf{u}_N$ defines an  algebraic subgroup $H^\perp$, and for any point $P\in E^N$ there are  two points $P_0\in H$, $P^\perp\in H^\perp$, unique up to torsion points in $H\cap H^\perp$, such that $P=P_0+P^\perp$.

Let $U$ be the $N\times N$ matrix with rows $\mathbf{u}_1,\dotsc,\mathbf{u}_N$, and let $\Delta$ be its determinant.

Notice that
\[
 |\Delta|=\det\Lambda \cdot \det \Lambda^\perp
\]
because $\Lambda$ and $\Lambda^\perp$ are orthogonal.

We remark that $\mathbf{u}_i(P_0)=0$ for all $i=1,\dotsc,s$, because $P_0\in H$, and $\mathbf{u}_i(P^\perp)=0$ for all $i=s+1,\dotsc,N$ because $P^\perp\in H^\perp$.

Therefore

\begin{equation*}
UP^\perp=
\left(
\begin{array}{c}
\mathbf{u}_1(P^\perp)\\
\vdots\\
\mathbf{u}_s(P^\perp)\\
0\\
\vdots\\
0
\end{array}
\right)
=
\left(
\begin{array}{c}
\mathbf{u}_1(P_0+P^\perp)\\
\vdots\\
\mathbf{u}_s(P_0+P^\perp)\\
0\\
\vdots\\
0
\end{array}
\right)
=
\left(
\begin{array}{c}
\mathbf{u}_1(P)\\
\vdots\\
\mathbf{u}_s(P)\\
0\\
\vdots\\
0
\end{array}
\right).
\end{equation*}
Recall, that there exists a matrix  $U^*$, called the adjugate matrix of $U$, with coefficients in $\rend$, such that 
$
UU^*=U^*U=(\det U)\mathrm{Id}.
$
So\begin{equation}\label{matri}
\Delta P^\perp=U^*UP^\perp=U^*\left(
\begin{array}{c}
\mathbf{u}_1(P)\\
\vdots\\
\mathbf{u}_s(P)\\
0\\
\vdots\\
0
\end{array}
\right).
\end{equation}
 
Moreover by   Hadamard's inequality, the $i$-th column of $U^*$ has all entries with absolute value bounded by \begin{equation}\label{boundadj}
\frac{||{\mathbf{u}_1}||\dotsm ||{\mathbf{u}_N}||}{||{\mathbf{u}_i}||}.\end{equation}

Recall that for  $\alpha\in \rend$ and $Q\in E$ then $\hat h(\alpha Q)=\abs{\alpha}^2 \hat h(Q)$, because $\deg \alpha=|\alpha|^2.$ 
Therefore, computing canonical heights in \eqref{matri} and using \eqref{boundadj}  we have 
\begin{equation*}
\hat h(P^\perp)\leq  \frac{N||{\mathbf{u}_1}||^2\dotsm ||{\mathbf{u}_N}||^2}{|{\Delta}|^2}\sum_{i=1}^s\frac{\hat h(\mathbf{u}_i(P))}{||{\mathbf{u}_i}||^2}.
\end{equation*}

Recall inequality \eqref{mu2mu^}, which gives
 \begin{equation*}
\mu_2(H+P)\leq \hat\mu(H+P) + N\ccinque(E)\end{equation*}
where  $\ccinque(E)$ was defined in Proposition \ref{confrontoaltezze}.
By \cite{preprintPhilippon} we have
$
\hat\mu(H+P)=\hat h(P^\perp)
$
and therefore, applying Zhang's inequality and (\ref{mu2mu^}) we obtain
\begin{align}
\notag h_2(H+P)&\leq (N-s+1)(\deg H)(\hat h(P^\perp)  +N\ccinque(E))\leq\\
\label{ultimarigacost}&\leq N (N-s+1)(\deg H) \left(\frac{  ||{\mathbf{u}_1}||^2\dotsm ||{\mathbf{u}_N}||^2}{{\abs{\Delta}^2}}\sum_{i=1}^s\frac{\hat h(\mathbf{u}_i(P))}{||{\mathbf{u}_i}||^2}+\ccinque(E)\right).
\end{align}

By \eqref{gradorighe}  we know
\[
 \deg H\leq 3^N N! \left(12^{N-1} 2\right)^s\prod_{i=1}^s ||\mathbf{u}_i||^2.
\]
 Using  Minkowski's theorem \ref{Minko} we deduce 

\[
\frac{\deg H}{(\det\Lambda)^2}\leq 3^N N! (12^{N-1} 2)^s\frac{2^s|D_K|^{\frac{s}{2}}}{\omega_{2s}},
\]
and
\[
 \frac{\prod_{i=s+1}^N ||\mathbf{u}_i||^2}{(\det\Lambda^\perp)^2}\leq \frac{2^{N-s}|D_K|^{\frac{N-s}{2}}}{\omega_{2(N-s)}}.
\]

Plugging these inequalities in \eqref{ultimarigacost} and using $|\Delta|=\det \Lambda \det \Lambda^\perp$, we obtain the desired bound of the statement.
\end{proof}


\section{The auxiliary translate}
\label{atrans}
In this section we construct the auxiliary translate which  approximates the torsion variety passing through a point. We shall estimate the invariants of this  translate explicitly. In  \cite[Proposition 3.1]{ExpTAC} we did it for the non CM case. In the following proposition we generalize the result also to the CM case. 
\begin{propo}\label{main} 
Let $E$ be an elliptic curve with CM by $\tau$ and let $K=\Q(\tau)$  with discriminant $D_K$.
Let $P=(P_1,\dotsc,P_N)\in B\subset  E^N$, where $B$ is a torsion variety of dimension $m$.
Let $T\geq 1$ be a real number.

Then there exists an abelian subvariety $H\subset  E^N$ of codimension $s$ such that 
\begin{align*}
\deg(H+P)\leq&  C_3(N,s)  |D_K|^sT\\
h_2(H+P)\leq&C_4(N,s,m) |D_K|^{\frac{N}{2}+s+2}T^{1-\frac{N}{ms}}\hat h(P)+ C_5(N,s,E) |D_K|^sT 
\end{align*}
where
\[C_3(N,s)=3^N N! \left(12^{N-1}2\right)^s,\]
\[C_4(N,s,m)=\frac{C_1(N,s)C_2(m)N^2 s 2^{N+2} (2(2N)^{2N})^{\frac{2}{m}}}{\omega_{2(N-s)}\omega_{2s}}\]
\[C_5(N,s,E)= C_1(N,s)\ccinque(E)\]
where $C_1(N,s)= N(N-s+1)3^N N! \left(12^{N-1}2\right)^s$,  $C_2(m)=\frac{m^3 (2m!)^4}{2^{4m-5}}$, $\omega_{2n}=\pi^n/n!$ and  $\ccinque(E)$ is given in Proposition \ref{confrontoaltezze}.
\end{propo}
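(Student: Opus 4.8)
The plan is to construct $H$ as the orthogonal complement of a well-chosen algebraic subgroup $H^\perp$ of dimension $s$, where $H^\perp$ is built from a single vector (or a matrix with $s$ rows) that makes $\hat h(\mathbf{u}(P))$ small relative to $\|\mathbf{u}\|^2$, and then invoke Proposition \ref{stimaaltezzanormalizzatatraslato} together with the degree bound \eqref{gradorighe}. Concretely, I would start from the torsion variety $B$ of dimension $m$ containing $P$. By the discussion in Subsection \ref{prelim-S}, $B$ is (a component of) the algebraic subgroup cut out by a matrix $\varphi_B \in \mathrm{Mat}_{(N-m)\times N}(\rend)$ of rank $N-m$; equivalently the Lie algebra of $B$ is an $(N-m)$-codimensional subspace $W_B \subset \C^N$ defined over $K$, and $P$, lifted to $\C^N$, lies (up to a lattice point) in $W_B$. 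The point of the auxiliary construction is to find inside the orthogonal complement $W_B^\perp$ — which has dimension $N-m$ and is spanned by short vectors of the $K$-lattice $W_B^\perp \cap \Lambda_0^N$ — a sublattice generated by $s$ vectors $\mathbf{u}_1,\dots,\mathbf{u}_s$ of controlled norm so that $H^\perp = \ker(\text{matrix of the }\mathbf{u}_i)$ contains a point close to $P$, hence $H = (H^\perp)^\perp$ is the desired translate approximant: $\hat h(\mathbf{u}_i(P))$ will be small because $\mathbf{u}_i \in W_B^\perp$ annihilates the $W_B$-component, and $P$'s component orthogonal to $W_B$ is exactly what produces the height.

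The key steps, in order. First, reduce to the orthogonal complement: using Minkowski's Theorem \ref{Minko} applied to the $K$-lattice $W_B^\perp \cap \rend^N$, extract a basis $\mathbf{w}_1,\dots,\mathbf{w}_{N-m}$ with $\|\mathbf{w}_i\| = \lambda_i$ the successive minima, and a bound $\prod \lambda_i^2 \ll |D_K|^{(N-m)/2} (\det W_B^\perp\cap\rend^N)^2$. Second, a pigeonhole/averaging argument over scalar multiples or over a box of lattice combinations of the $\mathbf{w}_i$: one wants $s$ of them (or $s$ suitable combinations) whose norms are all $\le$ roughly $T^{1/(ms)}$-type quantities while keeping $\hat h(\mathbf{u}_i(P))$ controlled — this is where the exponent $1 - \tfrac{N}{ms}$ and the factor $(2(2N)^{2N})^{2/m}$ in $C_4$ come from, presumably via an explicit Siegel-type/box argument bounding the number of lattice points and the heights simultaneously, combined with the standard fact $\hat h(\alpha Q) = |\alpha|^2 \hat h(Q)$. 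Third, assemble $H = (H^\perp)^\perp$, bound $\deg(H+P)$ directly from \eqref{gradorighe} as $\le 3^N N! (12^{N-1}2)^s \prod \|\mathbf{u}_i\|^2 \le C_3(N,s)|D_K|^s T$ after inserting the norm bounds, and bound $h_2(H+P)$ by plugging the same data into Proposition \ref{stimaaltezzanormalizzatatraslato}, splitting the resulting expression into the term proportional to $\hat h(P)$ (giving the $C_4 |D_K|^{N/2+s+2} T^{1-N/(ms)}\hat h(P)$ summand after using $|\Delta|^2 = (\det\Lambda)^2(\det\Lambda^\perp)^2$ and the two Minkowski estimates displayed just before Section \ref{atrans}) and the term proportional to $\ccinque(E)$ (giving $C_5(N,s,E)|D_K|^s T$). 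Fourth, bookkeeping of all the numerical constants $C_1,C_2,C_3$ so that the final constants match those in the statement.

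The main obstacle I anticipate is the second step: choosing the $\mathbf{u}_i$ so that \emph{both} $\prod\|\mathbf{u}_i\|^2$ is small (of size $\sim |D_K|^{?}T$) \emph{and} $\sum \hat h(\mathbf{u}_i(P))/\|\mathbf{u}_i\|^2$ is small (of size $\sim T^{-N/(ms)}\hat h(P)$) simultaneously. This is a counting argument in the $K$-lattice $W_B^\perp$: one considers all lattice combinations in a box of a given size, notes that the sum of the relevant height contributions over the box is controlled by the geometry (Zhang's inequality / the relation $\hat\mu(H+P)=\hat h(P^\perp)$ from \cite{preprintPhilippon}), and then a pigeonhole selection picks out $s$ elements beating the average. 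Making this explicit over a $K$-lattice rather than a $\Z$-lattice — which is precisely the novelty flagged in the introduction — requires the adelic Minkowski Theorem \ref{Minko} and the identification $K^r \cong \Q^{2r}$, and the constants $2^{N+2}$, $(2(2N)^{2N})^{2/m}$ in $C_4(N,s,m)$ will emerge from the explicit volume/counting estimates there. Everything else (degree bounds, height of translate, constant chasing) is mechanical given the results already established in Sections \ref{proofmain} and \ref{section3}.
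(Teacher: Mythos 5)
Your final assembly steps (degree via \eqref{gradorighe}, height via Proposition \ref{stimaaltezzanormalizzatatraslato}, the Minkowski factors and the splitting into the $\hat h(P)$-term and the $\ccinque(E)$-term) match the paper, but the core of your construction is wrong. You propose to take the vectors $\mathbf{u}_1,\dotsc,\mathbf{u}_s$ inside the $K$-lattice $W_B^\perp\cap\rend^N$, i.e.\ inside the annihilator lattice of $B$, so that $\hat h(\mathbf{u}_i(P))$ is small ``because $\mathbf{u}_i$ annihilates the $W_B$-component''. Vectors in that lattice indeed give $\hat h(\mathbf{u}_i(P))=0$, but their norms are governed by the successive minima of a lattice that depends on $B$, and these can be arbitrarily large; no pigeonhole over boxes of combinations of the $\mathbf{w}_i$ can produce vectors of norm $\le T^{1/(ms)}$-size inside that lattice, since all such combinations still lie in it. The whole point of Proposition \ref{main} is uniformity in $B$: for \emph{every} $T\ge 1$ (e.g.\ $T$ of size a power of $\deg\Ci$, as chosen later in Theorem \ref{casotras}) one needs $\deg(H+P)\le C_3(N,s)|D_K|^s T$ with constants independent of $B$, and the price for this is precisely the nonzero term $C_4|D_K|^{N/2+s+2}T^{1-N/(ms)}\hat h(P)$: the auxiliary subgroup does not annihilate $P$, it only approximately does so.

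The missing idea is where the smallness of $\hat h(\mathbf{u}(P))$ actually comes from. In the paper it is not the Lie-algebra/period-lattice picture at all, but the N\'eron--Tate pairing on the finitely generated group $\Gamma_P$ generated by the coordinates of $P$: since $P\in B$ with $\dim B=m$, this group has $\rend$-rank $m$, and Lemma \ref{LemmaHab1} (using Lemma 3 of \cite{ioannali} for a quasi-orthogonal system of generators, extended to $\rend$-coefficients via Philippon's sesquilinear pairing -- this is the genuinely new CM ingredient) produces $m$ linear forms $\mathbf{L}_j$ with $\|\mathbf{L}_j\|\le 1/|D_K|$ such that $\hat h(\mathbf{t}(P))\le C_2(m)N^2|D_K|^2\max_j|\mathbf{L}_j(\mathbf{t})|^2\,\hat h(P)$ for all $\mathbf{t}\in\rend^N$. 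Then the CM variant of Habegger's geometry-of-numbers lemma (Lemma \ref{LemmaHab3}, proved by splitting complex forms into real ones via Lemma \ref{dec} and applying Habegger's Lemma 2 in $\Z^{2N}$) supplies $K$-linearly independent $\mathbf{u}_1,\dotsc,\mathbf{u}_s\in\rend^N$ with $\|\mathbf{u}_1\|\dotsm\|\mathbf{u}_s\|\le|\tau|^sT$ on which all the $\mathbf{L}_j$ are simultaneously small, of relative size $T^{1-N/(ms)}$; this is where the exponent and the factor $(2(2N)^{2N})^{2/m}$ really originate, not from a Zhang-inequality averaging over a box. Finally, $H$ is taken directly as the zero component of the subgroup cut out by $\mathbf{u}_1(\mathbf{X})=\dotsb=\mathbf{u}_s(\mathbf{X})=0$; no passage through $(H^\perp)^\perp$ is needed (the orthogonal-complement mechanism enters only inside the proof of Proposition \ref{stimaaltezzanormalizzatatraslato}).
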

To extend to an imaginary quadratic field $K$ what done in \cite{ExpTAC} for rational numbers, we choose a $\Z$-basis  of the ring of integers $\mathcal{O}_K$ of $K$ which gives the successive minima of $\mathcal{O}_K$ as $\Z$-lattice. Via this identification, we associate to any object over $\mathcal{O}_K$ an object over $\Z$, in a way that bounds are  kept small. The estimates in the CM case are more elaborated and they are carried out in the next paragraphs, which are devoted to the proof of  this proposition.

\subsection{Geometry of numbers}\label{subsec-geomnum}
If $u$ is a vector, we denote by $||u||$ its euclidean norm and for a linear form $L$ we denote by $||L||$  the norm of the vector of the coefficients of $L$.
We have the following CM variant of Lemma 7.5 of \cite{ExpTAC}.
\begin{lem}\label{LemmaHab1}
Let $E$ be an elliptic curve with CM by $\tau$, let $K=\Q(\tau)$ and $D_K$ its discriminant. Let $P=(P_1,\dotsc,P_N)\in B\subset  E^N$, where $B$ is a torsion variety of dimension $ m$.
Then there exist linear forms $\mathbf{L}_1,\dotsc,\mathbf{L}_{m}\in \mathbb{C}[X_1,\dotsc,X_N]$ such that $||\mathbf{L}_j||\leq 1/|D_K|$  and 
\begin{equation*}
\hat h(t_1P_1+\dotsb+t_NP_N)\leq C_2(m) N^2 |D_K|^2\max_{1\leq j\leq m}\{|\mathbf{L}_j(\mathbf{t})|^2\}\hat h (P)
\end{equation*}
for all $\mathbf{t}=(t_1,\ldots,t_N)\in\rend^N$, where 
\[C_2(m)=\frac{m^3 (2m!)^4}{2^{4m-5}}.\]
\end{lem}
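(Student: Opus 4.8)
The statement is the CM analogue of Lemma 7.5 of \cite{ExpTAC}, so the plan is to reduce the CM case to the rational case treated there by descending from $\rend$ to $\Z$ via a good integral basis of $\mathcal{O}_K$. First I would fix a $\Z$-basis $\{1,\omega\}$ of $\mathcal{O}_K$ realizing its successive minima as a $\Z$-lattice of rank $2$; by the estimate $\mathrm{vol}\,\mathcal{O}_K=|D_K|^{1/2}/2$ recalled in the proof of Theorem \ref{Minko}, each basis vector has euclidean norm of size comparable to $|D_K|^{1/2}$, and more importantly the ``distortion'' incurred when passing between $\rend$-linear and $\Z$-linear data is controlled by a power of $|D_K|$. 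Writing each coefficient $t_i = a_i + b_i\omega$ with $a_i,b_i\in\Z$, the $\rend$-linear form $\sum t_i P_i$ becomes a $\Z$-linear combination of the $2N$ points $P_1,\omega P_1,\dots,P_N,\omega P_N$ in $E^N$, and $\hat h(\omega P_i)=|\omega|^2\hat h(P_i)\le |D_K|\,\hat h(P_i)$, so the relevant ``real'' point still has height $\le c\,|D_K|\,\hat h(P)$.

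Next I would invoke the non-CM version, Lemma 7.5 of \cite{ExpTAC}, applied to the image of $B$ (a torsion subvariety of dimension $\le m$) in $E^{2N}$ obtained from the $\Z$-module structure: this produces linear forms $\mathbf{M}_1,\dots,\mathbf{M}_m$ in $2N$ variables with small norm and with
\[
\hat h\Big(\sum_{i}(a_iP_i+b_i\omega P_i)\Big)\le C_2(m)(2N)^2\max_j|\mathbf{M}_j(\mathbf{a},\mathbf{b})|^2\,\hat h(P').
\]
Then I would repackage the pair $(\mathbf{a},\mathbf{b})$ back into $\mathbf{t}\in\rend^N$: the substitution $(a_i,b_i)\mapsto t_i$ is $\Z$-linear with matrix entries bounded in terms of the chosen basis of $\mathcal{O}_K$, hence the composed forms $\mathbf{L}_j(\mathbf{t})$ are $\mathbb{C}$-linear in $X_1,\dots,X_N$ and satisfy $|\mathbf{L}_j(\mathbf{t})| \le |\mathbf{M}_j(\mathbf{a},\mathbf{b})|$ up to a factor $\ll |D_K|$ (or, going the other way, $\|\mathbf{L}_j\|\le 1/|D_K|$ after rescaling, which is where the normalization $1/|D_K|$ in the statement comes from). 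Collecting the $|D_K|$-powers — one factor from $\hat h(P')\le |D_K|\hat h(P)$, one from rescaling the forms to norm $\le 1/|D_K|$, and one from the basis-change on the arguments — yields exactly the $N^2|D_K|^2$ shape with the same constant $C_2(m)=m^3(2m!)^4/2^{4m-5}$ inherited verbatim from \cite{ExpTAC}.

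\textbf{Main obstacle.} The delicate point is bookkeeping the powers of $|D_K|$ so that the final exponent is exactly $2$ and not larger, and simultaneously that the constant $C_2(m)$ is literally the one from the rational case with only the harmless $N\rightsquigarrow 2N$ substitution absorbed. This requires being careful that the basis $\{1,\omega\}$ is chosen to minimize norms (so $|\omega|^2\le |D_K|$, not merely $|\omega|^2\ll |D_K|$) and that the inverse basis-change matrix also has entries of size $\ll |D_K|^{1/2}$ — both of which follow from $\mathrm{vol}\,\mathcal{O}_K = |D_K|^{1/2}/2$ and Minkowski, but need to be tracked through the two linear substitutions without slack. A secondary subtlety is that the torsion variety $B\subset E^N$ must be correctly transported to a torsion variety of the same dimension in $E^{2N}$ so that Lemma 7.5 of \cite{ExpTAC} applies with $m$ unchanged; this is immediate once one notes the $\Z$-linear identification $E^N\hookrightarrow E^{2N}$, $P_i\mapsto(P_i,\omega P_i)$, is an isogeny onto its image composed with a subgroup inclusion, hence sends torsion varieties to torsion varieties without raising dimension.
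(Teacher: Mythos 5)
Your reduction to the non-CM case has a genuine gap at its two key steps. First, Lemma 7.5 of \cite{ExpTAC} cannot be invoked as a black box here: it is proved for $\rend=\Z$, and its proof ties the number of forms to the $\Z$-rank of the group $\Gamma_P$ generated by the coordinates, using that a torsion variety of dimension $m$ forces $\Z$-rank at most $m$. For a CM curve this relation fails — a torsion variety of dimension $m$ only gives $\rend$-rank $m$, hence $\Z$-rank up to $2m$ — and embedding $P\mapsto(P_i,\omega P_i)\in E^{2N}$ does not change this (the curve is still CM, and $\Gamma_{P'}=\Gamma_P$ still has $\Z$-rank $2m$). Rerunning the non-CM argument honestly therefore produces $2m$ real forms, with the constant evaluated at rank $2m$, not $m$ forms with "$C_2(m)$ inherited verbatim"; indeed the constant in the statement, $m^3((2m)!)^4/2^{4m-5}$, is not the non-CM constant with $N\rightsquigarrow 2N$, but reflects the rank-$2m$ quasi-orthogonality bound of \cite{ioannali} plus an extra factor coming from cross terms (see below).

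Second, and more fundamentally, your repackaging step is not available: the substitution $t_i=a_i+b_i\omega$ is only $\mathbb{R}$-linear, so a real linear form in $(\mathbf{a},\mathbf{b})$ corresponds, as a function of $\mathbf{t}$, to an expression in $t_i$ \emph{and} $\bar t_i$, not to a $\mathbb{C}$-linear form; there is no reason the real span of the forms produced in $E^{2N}$ is stable under the complex structure, so you cannot in general pair them into $m$ forms $\mathbf{L}_j\in\mathbb{C}[X_1,\dots,X_N]$ as the statement (and its later use in Lemma \ref{LemmaHab3}, which goes in the opposite direction, decomposing complex forms into real ones) requires. The missing idea is precisely how the paper gets genuine $\mathbb{C}$-linearity: it works with Philippon's hermitian pairing $\langle Q,R\rangle=\langle Q,R\rangle_{NT}-\frac{1}{\sqrt D}\langle Q,\sqrt D R\rangle_{NT}$, applies Lemma 3 of \cite{ioannali} to the rank-$2m$ group to get quasi-orthogonal $\Z$-generators, extracts from them an $\rend$-basis $\mathbf{g}_1,\dots,\mathbf{g}_m$, and then controls the cross terms $\langle\sum a_i\mathbf{g}_i,\tau\sum b_i\mathbf{g}_i\rangle$ by a case analysis on $\tau$ to obtain the lower bound \eqref{sesq} for $\hat h(\alpha_1\mathbf{g}_1+\dots+\alpha_m\mathbf{g}_m)$ with $\alpha_i\in\rend$; the forms $\mathbf{L}_j$ are then defined directly from the $\rend$-coefficients $\gamma_{ij}$ of the $P_i$ over this basis, with the normalization $A=\max_{i,j}\{|D_K|^2|\gamma_{ij}|^2\hat h(\mathbf{g}_j)\}$ producing the norm bound $1/|D_K|$ and the factor $|D_K|^2$. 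Without an argument that your generators can be taken $\rend$-compatibly (which is exactly the content of that hermitian-pairing step), the proposal does not yield the stated lemma.
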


\begin{proof}
 For $Q,R\in E$, consider the pairing defined by Philippon in \cite{preprintPhilippon}  
\begin{equation}\label{hpairing}\langle Q,R\rangle=\langle Q,R\rangle_{NT}-\frac{1}{\sqrt D} \langle Q,\sqrt D R\rangle_{NT}\end{equation}
where $\langle,\rangle_{NT}$ is the N\'eron-Tate pairing and $D$ is a squarefree negative integer such that $K=\qe(\sqrt D)$.

This pairing gives the N\'eron Tate height since $\langle Q, Q\rangle=\hat h(Q)$, and it is sesquilinear  and hermitian.

Since $P$ belongs to an algebraic subgroup of dimension $m$, then the group $\Gamma_P$ generated by its coordinates has rank   $2m$ (over $\Z$). By Lemma 3 of \cite{ioannali} with $\Gamma=\Gamma_P$ and $r=2m$, there exist generators $g_1,\ldots,g_{2m}  \in E$ of the free part of $\Gamma_P$  such that for every integer $a_i$ we have
\begin{equation}\label{EveAnnali}\hat h(a_1g_1+\ldots+a_m g_{2m})\geq \frac{2^{4m-2}}{(2m)^2 (2m)!^4}\left(\sum_{i=1}^{2m} |a_i|^2 \hat h(g_i) \right).\end{equation}
We now extract from the $g_i$ an $\rend$-basis $\mathbf{g}_1,\dots \mathbf{g}_m$ of the free part of $\Gamma_P$.  We want to lower bound $\hat h(\alpha_1\mathbf{g}_1+\ldots+\alpha_m \mathbf{g}_{m})$ for every $\alpha_i\in \rend$.  Writing  $\alpha_i=a_i+\tau b_i\in \rend$, where $a_i,b_i\in \Z$,  we have
\begin{align*}
\hat h(\alpha_1\mathbf{g}_1+\ldots+\alpha_m \mathbf{g}_{m})&=\hat h(\sum_{i=1}^m a_i \mathbf{g}_i +\tau \sum_{i=1}^m b_i \mathbf{g}_i)=\hat h(\sum_{i=1}^m a_i \mathbf{g}_i)+|\tau|^2  \hat h(\sum_{i=1}^m b_i \mathbf{g}_i)+\\
&+\langle \sum_{i=1}^m a_i \mathbf{g}_i,\tau \sum_{i=1}^m b_i \mathbf{g}_i\rangle+\langle \tau \sum_{i=1}^m b_i \mathbf{g}_i,\sum_{i=1}^m a_i \mathbf{g}_i\rangle.\\
\end{align*}

Using the definition \eqref{hpairing}  and the fact that the pairing is hermitian, we have \begin{align*}\langle \sum_{i=1}^m a_i \mathbf{g}_i,\tau \sum_{i=1}^m b_i \mathbf{g}_i\rangle+\langle \tau \sum_{i=1}^m b_i \mathbf{g}_i,\sum_{i=1}^m a_i \mathbf{g}_i\rangle=&2\mathrm{Re}\left(\langle \sum_{i=1}^m a_i \mathbf{g}_i,\tau \sum_{i=1}^m b_i \mathbf{g}_i\rangle\right)=\\
=&2\langle \sum_{i=1}^m a_i \mathbf{g}_i,\tau \sum_{i=1}^m b_i \mathbf{g}_i,\rangle_{NT};\end{align*}
on the other hand, by the sesquilinearity we also obtain
\[ \mathrm{Re}\left(\langle \sum_{i=1}^m a_i \mathbf{g}_i,\tau \sum_{i=1}^m b_i \mathbf{g}_i\rangle\right)=\mathrm{Re}\left(\tau \langle \sum_{i=1}^m a_i \mathbf{g}_i, \sum_{i=1}^m b_i \mathbf{g}_i\rangle\right)\]
thus \begin{equation}\label{ReugualeNT}\mathrm{Re}\left(\tau \langle \sum_{i=1}^m a_i \mathbf{g}_i, \sum_{i=1}^m b_i \mathbf{g}_i\rangle\right)=\langle \sum_{i=1}^m a_i \mathbf{g}_i,\tau \sum_{i=1}^m b_i \mathbf{g}_i,\rangle_{NT}.\end{equation}
If $\tau=\sqrt{D}$ or $2\sqrt{D}$, by \eqref{ReugualeNT} it easily follows \[\langle \sum_{i=1}^m a_i \mathbf{g}_i,\tau \sum_{i=1}^m b_i \mathbf{g}_i,\rangle_{NT}=0,\] while  if $\tau=(1+\sqrt{D})/2$ one has \[\langle \sum_{i=1}^m a_i \mathbf{g}_i,\tau \sum_{i=1}^m b_i \mathbf{g}_i,\rangle_{NT}=\frac{\langle \sum_{i=1}^m a_i \mathbf{g}_i, \sum_{i=1}^m b_i \mathbf{g}_i,\rangle_{NT}}{2}.\]
By  \eqref{EveAnnali} and  $|\tau|^2\geq 1$ we finally obtain 
\begin{equation}
\label{sesq}
\hat h(\alpha_1\mathbf{g}_1+\ldots+\alpha_m \mathbf{g}_{m})\geq \frac{2^{4m-3}}{(2m)^2 (2m)!^4}\left(\sum_{i=1}^{m} |\alpha_i|^2 \hat h(\mathbf{g}_i) \right).
\end{equation}

By the choice of the $\mathbf{g}_i$, for every $1\leq i\leq N$ we can write
\[P_i=\zeta_i+\gamma_{i1}\mathbf{g}_1+\ldots+\gamma_{im}\mathbf{g}_m\]
for some $\gamma_{ij}\in \rend$  and  torsion points $\zeta_i\in E$.

Let $A=\max_{i,j}\{|D_K|^2|\gamma_{ij}|^2\hat h(\mathbf{g}_j)\}$. We can suppose $A>0$, otherwise $P$ would be a torsion point and the lemma would be trivial.
Define now:
\begin{align*}
\tilde{L}_j&=\gamma_{1j}X_1+\dotsb+\gamma_{Nj}X_N &j&=1,\dotsc,m\\
L_j&=\left(\frac{\hat h(\mathbf{g}_j)}{NA}\right)^\frac{1}{2}\tilde{L}_j &j&=1,\dotsc,m.
\end{align*}
By the definition of $A$ we obtain $||L_j||\leq 1/|D_K|$. Moreover for every $\mathbf{t}=(t_1,\ldots,t_N)\in\rend^N$, we have
\begin{equation*}
t_1P_1+\dotsb+t_NP_N=\xi+\sum_{i=1}^m \tilde{L}_j(\mathbf{t})\mathbf{g}_j
\end{equation*}
where $\xi$ is a  torsion point.
Computing heights we get 
\begin{align}
\notag \hat h(t_1P_1+\dotsb+t_NP_N)=\hat h\left(\sum_{j=1}^m \tilde{L}_j(\mathbf{t})g_j\right)\leq N\sum_{j=1}^m|\tilde{L}_j(\mathbf{t})|^2 \hat h(g_j)=\\
\label{ineqlemma}= N^2A\sum_{j=1}^m|{L}_j(\mathbf{t})|^2\leq mN^2A\max_{1\leq j\leq m}\{|{L}_j(\mathbf{t})|^2\}.
\end{align}

If $i_0,j_0$ are the indices for which the maximum is attained in the definition of $A$, then by (\ref{sesq}) we get 
\begin{equation*}
\frac{2^{4m-3}}{(2m)^2 (2m)!^4}A=\frac{2^{4m-3}}{(2m)^2 (2m)!^4}|D_K|^2|\gamma_{i_0 j_0}|^2\hat h(\mathbf{g}_{j_0})\leq |D_K|^2\hat h(P_{i_0})\leq  |D_K|^2\hat h(P).
\end{equation*}
Combining this with inequality \eqref{ineqlemma}, we conclude the proof.

\end{proof}

The following lemma is proven  by a simple computation  with complex numbers, and it is useful to decompose linear forms over $\mathbb{C}$ in linear forms over $\mathbb{R}$.

\begin{lem}\label{dec} Let $E$ be an elliptic curve with CM by $\tau$. Let $K=\Q(\tau)$ and $D_K$ its discriminant.
Let $\mathbf{L}\in  \mathbb{C} [X_1,\dotsc,X_N]$ be a linear form and let $\mathbf{t}\in \rend^N$ then
$$\mathbf{L}(\mathbf{t})={L}_1(t_1)+{L}_2x_0(t_2)+({L}_2(t_1)+{L}_1(t_2)+{L}_2y_0(t_2))\tau$$
where $\mathbf{L}={L}_1+L_2\tau$ with $L_i\in  \mathbb{R} [X_1,\dotsc,X_N]$, $\mathbf{t}=t_1+t_2\tau$ with ${t}_i\in \mathbb{Z}^N$ and $\tau^2=x_0+y_0\tau$.

Moreover $$||(L_1,L_2x_0)|||\le |D_K|||L||$$ and $$||(L_2,L_1+L_2y_0)||\le 2||L||;\,\,\,\,\,||(L_2,L_1+L_2y_0)||\le |D_K| ||L||.$$
\end{lem}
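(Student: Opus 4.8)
The plan is to prove Lemma \ref{dec} by a direct computation with complex numbers, tracking the real and imaginary parts (in the sense of the decomposition into $1$ and $\tau$) through the evaluation of the linear form. First I would write $\mathbf{L} = L_1 + L_2 \tau$ with $L_i \in \mathbb{R}[X_1,\dots,X_N]$, and likewise $\mathbf{t} = t_1 + t_2\tau$ with $t_i \in \mathbb{Z}^N$, so that $\mathbf{L}(\mathbf{t}) = (L_1 + L_2\tau)(t_1 + t_2\tau) = L_1(t_1) + (L_1(t_2) + L_2(t_1))\tau + L_2(t_2)\tau^2$. Then I substitute $\tau^2 = x_0 + y_0\tau$ (where $x_0,y_0 \in \mathbb{Z}$ since $\tau$ is an algebraic integer, explicitly $\tau^2 = -\tfrac{1-D}{4} + \tau$ or $\tau^2 = D$ or $\tau^2 = 4D$ according to the three cases described in Subsection \ref{CM}), giving $L_2(t_2)\tau^2 = L_2 x_0(t_2) + L_2 y_0(t_2)\tau$. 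Collecting the coefficient of $1$ and of $\tau$ yields exactly the claimed formula
\[
\mathbf{L}(\mathbf{t}) = \bigl(L_1(t_1) + L_2 x_0(t_2)\bigr) + \bigl(L_2(t_1) + L_1(t_2) + L_2 y_0(t_2)\bigr)\tau,
\]
where I interpret $L_2 x_0$ and $L_2 y_0$ as the linear forms obtained by scaling the coefficients of $L_2$ by the integers $x_0$ and $y_0$.

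For the norm estimates, I would think of the pair $(L_1, L_2 x_0)$ as the linear form in $2N$ variables whose coefficient vector is the concatenation of the coefficient vectors of $L_1$ and $L_2 x_0$; then $\|(L_1, L_2 x_0)\|^2 = \|L_1\|^2 + x_0^2 \|L_2\|^2$. Since $\|L\|^2 = \|L_1\|^2 + \|L_2\|^2$ (the coefficients of $\mathbf{L}$ being $L_1$-coefficients plus $\tau$ times $L_2$-coefficients, with the $1, \tau$ being treated as an orthonormal-type splitting for the purpose of the norm as defined in the paper), it suffices to bound $\max(1, |x_0|) \le |D_K|$ and $\max(1,|y_0|,1+|y_0|) \le |D_K|$ and $1 + |y_0| \le 2$ (the last holds because $|y_0| \le 1$ in all three cases: $y_0 = 1$ when $\tau = (1+\sqrt D)/2$, and $y_0 = 0$ when $\tau = \sqrt D$ or $2\sqrt D$). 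Checking these inequalities case by case against the table $D_K = D, \mathfrak f = 1$; $D_K = 4D, \mathfrak f \in \{1,2\}$ is elementary: in the first case $x_0 = (D-1)/4$ so $|x_0| \le |D| = |D_K|$ and $y_0 = 1 \le |D_K|$; in the other cases $x_0 = D$ or $4D$ so $|x_0| \le |4D| = |D_K|$ and $y_0 = 0$.

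The computation is entirely routine, so there is no real obstacle; the only point requiring a little care is to make the bookkeeping of the norm consistent with the paper's convention that the norm $\|\mathbf{L}\|$ of a form over $\mathbb{C} = \mathbb{R} + \mathbb{R}\tau$ is computed by regarding each coefficient $\ell_1 + \ell_2\tau$ as contributing $\ell_1^2 + \ell_2^2$ — i.e. using the standard $\mathbb{Z}$-basis $\{1,\tau\}$ rather than the genuine complex modulus — and to verify that with this convention $\|(L_1, L_2 x_0)\|^2 = \|L_1\|^2 + x_0^2\|L_2\|^2 \le |D_K|^2(\|L_1\|^2 + \|L_2\|^2) = |D_K|^2\|\mathbf{L}\|^2$, and similarly $\|(L_2, L_1 + L_2 y_0)\|^2 \le \|L_2\|^2 + 2\|L_1\|^2 + 2 y_0^2\|L_2\|^2 \le 4\|\mathbf{L}\|^2$ using $(a+b)^2 \le 2a^2 + 2b^2$ and $y_0^2 \le 1$, while also $\|L_2\|^2 + (\|L_1\| + |y_0|\|L_2\|)^2 \le |D_K|^2\|\mathbf{L}\|^2$ since $|D_K| \ge 3$. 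I would present the three cases compactly in a short display rather than writing out every arithmetic step.
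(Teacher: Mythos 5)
Your first part coincides with the paper's own argument: the identity follows from linearity together with the multiplication rule $(a_1+a_2\tau)(b_1+b_2\tau)=a_1b_1+a_2b_2x_0+(a_1b_2+a_2b_1+a_2b_2y_0)\tau$, and your determination of $(x_0,y_0)$ in the three cases of Subsection \ref{CM} (namely $x_0=(D-1)/4,\ y_0=1$; $x_0=D,\ y_0=0$; $x_0=4D,\ y_0=0$) is exactly the case check the paper has in mind, so there is nothing to object to there.

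The one place where your argument has a real gap is the norm bookkeeping. You decide that $\|\mathbf{L}\|$ is computed in the $\{1,\tau\}$-coordinates, i.e.\ $\|\mathbf{L}\|^2=\|L_1\|^2+\|L_2\|^2$, and you attribute this convention to the paper. That is not the convention the lemma is actually used with: in Section 5 the norm of a form is the Euclidean norm of its complex coefficient vector, and in the proof of Lemma \ref{LemmaHab1} the bound $\|\mathbf{L}_j\|\le 1/|D_K|$ is derived from the complex absolute values $|\gamma_{ij}|$; it is that complex-norm bound which is fed into Lemma \ref{LemmaHab3}, where Lemma \ref{dec} must produce real forms of norm at most $1$ so that Habegger's Lemma 2 applies. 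For $\tau=(1+\sqrt D)/2$ the basis $\{1,\tau\}$ is not orthogonal: a coefficient $a+b\tau$ has $|a+b\tau|^2=(a+b/2)^2+|D|b^2/4$, and $a^2+b^2$ can be as large as $2|a+b\tau|^2$ (take $D=-3$, $a=-b$), so your identity $\|\mathbf{L}\|^2=\|L_1\|^2+\|L_2\|^2$ fails under the intended norm, and repairing it via $\|L_1\|^2+\|L_2\|^2\le 2\|\mathbf{L}\|^2$ only gives $\sqrt2\,|D_K|\,\|\mathbf{L}\|$ and $2\sqrt2\,\|\mathbf{L}\|$, which is too weak for the verbatim use in Lemma \ref{LemmaHab3}. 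The fix is what the paper's terse case-by-case instruction amounts to: check the inequalities coefficientwise against the complex modulus, i.e.\ $a^2+x_0^2b^2\le D_K^2\bigl((a+b/2)^2+|D|b^2/4\bigr)$ and $b^2+(a+y_0b)^2\le 4\bigl((a+b/2)^2+|D|b^2/4\bigr)$ for all real $a,b$ when $\tau=(1+\sqrt D)/2$ (elementary since $|D|\ge 3$), and against $a^2+|D|b^2$, respectively $a^2+4|D|b^2$, in the other two cases, where they are immediate because $y_0=0$; the bound by $|D_K|\,\|\mathbf{L}\|$ then follows from the bound by $2\|\mathbf{L}\|$ as you note, since $|D_K|\ge 3$. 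With that modification your proof is the paper's proof.
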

\begin{proof} The first part is a simple computation implied from the linearity and from the rule of multiplication of complex numbers, namely for $a_i, b_i \in \mathbb{R}$ we have  $(a_1+a_2\tau)(b_1+b_2\tau)=a_1b_1+a_2x_0b_2+(a_1b_2+a_2b_1+a_2y_0b_2)\tau$. 

For the second part one shall  do the computations with the three different possible cases for the values of  $\tau$ and $\tau^2$ given in  Section \ref{CM}, and compare them with the  value of the discriminant when $D$ is congruent $1$ modulo $4$ or not.
For the  last two inequalities one shall observe that for $D=-1$ then $y_0=0$. For $D<-1$ the inequalities are immediate because $y_0$ is either $0$ or $1$ and $|D_K|\ge2$.
\end{proof}

We can now prove a lemma of geometry of numbers which relies on   Lemma 2 of \cite{hab} and on a decompositions of forms over $\mathbb{C}$ in forms over $\mathbb{R}$.
\begin{lem}\label{LemmaHab3} 
 Let $E$ be an elliptic curve with CM by $\tau$ and $K=\qe(\tau)$  with discriminant $D_K$. Let $1\leq m \leq N$ and let $\mathbf{L}_1,\dotsc, \mathbf{L}_{m}\in  \mathbb{C} [X_1,\dotsc,X_N]$ be linear forms with $||{\mathbf{L}_j}||\leq 1/|D_K|$ for all $j$. Then for any real number $T\geq 1$ and any integer $s$ with $1\leq s\leq N$ there exist linearly independent vectors $\mathbf{u}_1,\dotsc, \mathbf{u}_s\in\rend^N$ such that  \[||{\mathbf{u}_1}||\dotsm||{\mathbf{u}_s}||\leq |\tau|^{s}T\] and for $1\leq j\leq m$ and $1\leq k\leq s$  
\[ ||{\mathbf{u}_1}||\dotsm ||{\mathbf{u}_s}||\frac{\abs{\mathbf{L}_j(\mathbf{u}_k)}}{||{\mathbf{u}_k}||}\leq  2(2(2N)^{2N})^{\frac{1}{m}}|\tau|^{s}T^{1-\frac{N}{ms}}
.\]
\end{lem}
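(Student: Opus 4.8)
The plan is to reduce the complex statement to a real one and invoke Lemma 2 of \cite{hab} over $\mathbb{R}$, keeping track of the loss of constants coming from the decomposition of forms over $\mathbb{C}$ into forms over $\mathbb{R}$. First I would use Lemma \ref{dec} to split each complex form $\mathbf{L}_j = L_j^{(1)} + L_j^{(2)}\tau$, and each candidate vector $\mathbf{u} = u^{(1)} + u^{(2)}\tau \in \rend^N$ (with $u^{(1)}, u^{(2)} \in \Z^N$), so that $\mathbf{L}_j(\mathbf{u})$ becomes a pair of real linear forms evaluated at the $2N$ integer coordinates $(u^{(1)}, u^{(2)}) \in \Z^{2N}$. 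Thus from the $m$ complex forms in $N$ variables bounded by $1/|D_K|$ we obtain $2m$ real forms in $2N$ variables; by the inequalities in Lemma \ref{dec} these real forms have norm bounded by $|D_K| \cdot \frac{1}{|D_K|} = 1$ (so in particular $\le 1$, which is what Habegger's lemma needs after a harmless rescaling).

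Next I would apply Lemma 2 of \cite{hab} with the parameters $N \rightsquigarrow 2N$, $m \rightsquigarrow 2m$, $s \rightsquigarrow 2s$, and the same $T$: this produces $2s$ linearly independent vectors $\mathbf{v}_1, \dots, \mathbf{v}_{2s} \in \Z^{2N}$ with $\|\mathbf{v}_1\| \cdots \|\mathbf{v}_{2s}\| \le T^2$ (roughly) and, for each real form $M_i$ and each $k$, the bound $\|\mathbf{v}_1\|\cdots\|\mathbf{v}_{2s}\| \frac{|M_i(\mathbf{v}_k)|}{\|\mathbf{v}_k\|} \le 2((2 \cdot 2N)^{2 \cdot 2N})^{1/(2m)} T^{2 - \frac{2N}{(2m)(2s)}}$; I would then re-examine the exact shape of the constant Habegger's lemma yields and match it against the target $2(2(2N)^{2N})^{1/m} T^{1 - \frac{N}{ms}}$, absorbing the discrepancy into the explicit constant (the exponents $\frac{2N}{(2m)(2s)} = \frac{N}{2ms}$ and the bookkeeping with $T^2$ versus $T$ require care here). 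From the $2N$ real coordinates of each $\mathbf{v}_k$ I reassemble a vector $\mathbf{u}_k = v_k^{(1)} + v_k^{(2)}\tau \in \rend^N$; one checks $\|\mathbf{u}_k\| \le |\tau| \|\mathbf{v}_k\|$ (since $|\alpha| = |a + b\tau| \le |\tau|\sqrt{a^2+b^2}$ after a short estimate using $|\tau| \ge 1$), which accounts for the factor $|\tau|^s$ in the product bound. I also need to extract $s$ genuinely $\rend$-linearly independent vectors from the $2s$ $\Z$-independent ones: taking, say, the odd-indexed $\mathbf{u}_1, \mathbf{u}_3, \dots$ and arguing as in the proof of Theorem \ref{Minko} that $\rend$-independence of $s$ of them is forced, while $\|\mathbf{u}_{2k-1}\| \le \|\mathbf{u}_{2k}\|$ keeps the product small.

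The main obstacle I anticipate is the \textbf{constant bookkeeping}: Habegger's Lemma 2 is stated for real forms of norm $\le 1$, and substituting $2N, 2m, 2s$ for $N, m, s$ changes both the polynomial-in-$N$ factor $(2N)^{2N} \rightsquigarrow (4N)^{4N}$ and the exponent of $T$, so I must verify that the stated constant $2(2(2N)^{2N})^{1/m}$ and exponent $1 - \frac{N}{ms}$ in the conclusion are genuinely what comes out, possibly after replacing $T$ by $T^{1/2}$ in the invocation or adjusting by the factor $|\tau|^s$ together with $|D_K|$-powers from Lemma \ref{dec}. A secondary technical point is ensuring the rescaling so that the real forms have norm exactly $\le 1$ does not degrade the final bound — here the inequality $\|(L^{(1)}, L^{(2)} x_0)\| \le |D_K| \|L\|$ from Lemma \ref{dec} combined with the hypothesis $\|\mathbf{L}_j\| \le 1/|D_K|$ is exactly calibrated to give norm $\le 1$, which I would emphasize. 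Once these constants are pinned down, the linear independence extraction and the norm-transfer $\|\mathbf{u}_k\| \le |\tau|\|\mathbf{v}_k\|$ are routine, and the lemma follows.
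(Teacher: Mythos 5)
Your overall architecture coincides with the paper's: split each $\mathbf{L}_j$ via Lemma \ref{dec} into two real forms of norm $\leq 1$ in $2N$ variables (the calibration $\|(L_{1},L_{2}x_0)\|\leq |D_K|\,\|\mathbf{L}\|\leq 1$ is indeed the point of the hypothesis $\|\mathbf{L}_j\|\leq 1/|D_K|$), apply Lemma 2 of \cite{hab} over $\Z^{2N}$ to the resulting $2m$ forms, reassemble integer vectors of $\Z^{2N}$ into vectors of $\rend^N$, extract $s$ $K$-linearly independent ones, and pay the factor $|\tau|^s$ through $\|\mathbf{u}_k\|\leq|\tau|\,\|u_k\|$. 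However, there is a genuine gap at the quantitative heart. Habegger's Lemma 2 is not of the aggregated shape you invoke: it is parametrized by a real $\rho\geq 1$ and yields a full system of $2N$ independent vectors $U_1,\dotsc,U_{2N}\in\Z^{2N}$ with reals $\lambda_1\leq\dots\leq\lambda_{2N}$ such that $\|U_k\|\leq\lambda_k$, $\lambda_1\dotsm\lambda_{2N}\leq 2(2N)^{2N}\rho^{2m}$, and the \emph{per-vector} bounds $|L_i(U_k)|\leq\rho^{-1}\lambda_k$. These per-vector bounds are exactly what survives the extraction: choosing the $s$ $K$-independent reassembled vectors greedily by minimal norm, the $k$-th one is some $U_j$ with $j\leq 2k-1$, so $|L_i(u_k)|^2\leq\rho^{-2}\lambda_{2k-1}\lambda_{2k}$ and the degenerate product telescopes to $\rho^{-2}\lambda_1\dotsm\lambda_{2s}\leq\rho^{-2}T^2$, with $\rho$ fixed by $T^2=(2(2N)^{2N})^{2s/N}\rho^{2ms/N}$; this is where the exponent $1-\tfrac{N}{ms}$ comes from. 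If instead you only transport the aggregated bounds you wrote (product of the $2s$ norms at most $T^2$, degenerate products at most a constant times $T^{2-\frac{N}{ms}}$, or $T^{1-\frac{N}{2ms}}$ if you feed in $T$ rather than $T^2$), then after extraction the ratio $\prod_{i\leq s}\|u_i\|/\prod_{i\leq 2s}\|v_i\|$ can only be bounded by $1$, and you end with exponent $2-\tfrac{N}{ms}$ (resp. $1-\tfrac{N}{2ms}$) instead of $1-\tfrac{N}{ms}$. That discrepancy is a power of $T$, not a constant, and it cannot be absorbed: in the application (Proposition \ref{main} with $s=1$, $m=r$, then Theorem \ref{casotras}) the exponent must be negative for every $r\leq N-1$, which $2-\tfrac{N}{r}$ and $1-\tfrac{N}{2r}$ are not once $r\geq N/2$.

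Two smaller points. Since Habegger's lemma needs $\rho\geq1$, the above choice of $\rho$ forces $T>(2(2N)^{2N})^{s/N}$, so the range of small $T$ must be treated separately (the paper takes $s$ standard basis vectors, for which the claimed inequalities hold trivially); your sketch omits this. Also, taking the odd-indexed reassembled vectors need not give $K$-linearly independent vectors (two of them may be $K$-proportional); what works is the greedy minimal-norm extraction, using that among the first $2k-1$ reassembled vectors the $K$-span has dimension at least $k$, which simultaneously gives the inequality $\|u_k\|\leq\lambda_{2k-1}$ needed in the telescoping above.
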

\begin{proof}
 If $T\le (2(2N)^{2N})^\frac{s}{N}$ then $(2(2N)^{2N})^{\frac{1}{m}}T^{1-\frac{N}{ms}}\geq 1$. It is then sufficient to take $u_i$ to be $s$ elements  of the standard basis of $\mathbb Z^N$. \\

We can then assume that $T> (2(2N)^{2N})^\frac{s}{N}$.

We shall define new forms  in $\mathbb{R}[X_1,\dots, X_N,\tau X_1,\dots,\tau X_N]$ that will allow us to use Habegger \cite[Lemma 2]{hab} and to produce the wished  estimates.
Let  $\mathbf{L}_i=L_{1i}+L_{2i}$ with $L_{1i}$  in   $\mathbb{R}[X_1,\dots, X_N]$ and $L_{2i}$ in   $\mathbb{R}[\tau X_1,\dots, \tau X_N]$.   In view of Lemma \ref{dec} we define:
\begin{equation}
\begin{split}
L_i&=(L_{1i},L_{2i}x_0) \in \mathbb{R}[X_1,\dots, X_N,\tau X_1,\dots,\tau X_N]\\
L'_i&=(L_{2i},L_{2i}y_0+L_{1i})  \in \mathbb{R}[X_1,\dots, X_N,\tau X_1,\dots,\tau X_N],
\end{split}
\end{equation}
where $\tau^2=x_0+y_0\tau$.
Then  $||L_i||\le  |D_K|  ||\mathbf{L}_i||\le 1$ and $||L'_i||\le  |D_K| ||\mathbf{L}_i||\le 1$.

We can then apply \cite{hab} Lemma 2  to the $2m$ forms ${L}_1, \dots {L}_m$ and ${L'}_1, \dots ,{L'}_m$  in  $\mathbb{R}[X_1,\dots, X_N,\tau X_1,\dots,\tau X_N]$ with $n=2N$ and his  $m$ twice the $m$ appearing here. So   for any real number $\rho\geq 1$  there exist linearly independent vectors $U_1,\dotsc, U_{2N}\in\Z^{2N}$ and  positive reals $\lambda_1\le\dots \le \lambda_{2N}$ such that  
\begin{equation}
\label{26}||U_i||\le \lambda_i\,\,\,\,\,{\rm and}\,\,\,\,\lambda_1\cdots \lambda_{2N}\le 2(2N)^{2N} \rho ^{2m}\end{equation}
and for every $i\le m$ and $k\le 2N$
\begin{equation}
\label{27}||L_i(U_k)||\le \rho^{-1}\lambda_k \,\,\,\,\,{\rm and}\,\,\,\,\||L'_i(U_k)||\le \rho^{-1}\lambda_k. \end{equation}
 
 For every $1\leq s\leq N$, consider the writing of  the vectors $U_1,\ldots,U_{2s}$ as $U_1=(A_{11},A_{21}),\dots ,U_{2s}=(A_{1,2s},A_{2,2s})$ with $A_{ij}\in \Z^N$ and  set $\mathbf{U}_i= A_{1i}+A_{2i}\tau$. Extract $s$ vectors  $\mathbf{u_i}= a_{1i}+\tau a _{2i}$ from the  $\mathbf{U}_1, \dots , \mathbf{U}_{2s}$ such that the vectors $\mathbf{u}_1, \dots , \mathbf{u}_s$  are $K$-linearly independent and of minimal norm. We denote $u_i=(a_{1i}, a _{2i})$.  
Then
 $||u_i||^2\le ||U_{2i-1}|| ||U_{2i}||$. Moreover, from (\ref{26})
 we deduce
 $$||u_1||^2\dots ||u_{s}||^2\le ||U_1||\dots ||U_{2s}||\le \lambda_1 \dots \lambda_{2s} \le (\lambda_1 \dots \lambda_{2N})^{\frac{s}{N}}\le (2(2N)^{2N})^\frac{2s}{N}\rho^{\frac{2ms}{N}}.$$

 Choose $\rho $ so that $T^2:=(2(2N)^{2N})^\frac{2s}{N}\rho^{\frac{2ms}{N}}$.  Since $T> (2(2N)^{2N})^\frac{s}{N}$ then $\rho\ge1$.
 
 Thus  $$||\mathbf{u}_1||\dots ||\mathbf{u}_{s}||\le |\tau|^s ||u_1||\dots ||u_{s}||\le |\tau|^s T$$
 which proves the first bound in the statement.

 By (\ref{27}) and from the fact that $u_k$ is one vector among the  $U_j$ with $j\le 2k-1$ we get that  
 $$||L_i(u_k)||^2=||L_i(U_j)||^2\le \rho^{-2}\lambda_j^2\le \rho^{-2}\lambda_{2k-1}\lambda_{2k}$$
 and similarly
 $$||L'_i(u_k)||^2\le \rho^{-2}\lambda_{2k-1}\lambda_{2k}.$$
This together with (\ref{26}) gives that 
  for all $i\le m$ we have 
 $$||u_1||^2\dots ||u_{s}||^2\frac{|L_i(u_k)|^2}{||u_k||^2}\le \rho^{-2} \lambda_1 \dots \lambda_{2s} \le \rho^{-2}T^2=(2(2N)^{2N})^{\frac{2}{m}} T^{2(1-\frac{N}{ms})}$$
 and 
 $$||u_1||^2\dots ||u_{s}||^2\frac{|L'_i(u_k)|^2}{||u_k||^2}\le \rho^{-2} \lambda_1 \dots \lambda_{2s} \le \rho^{-2}T^2=(2(2N)^{2N})^{\frac{2}{m}} T^{2(1-\frac{N}{ms})}.$$

By Lemma \ref{dec} we deduce that the $K$-linearly independent vectors  
$$\mathbf{u_i}= a_{1i}+\tau a _{2i}\in \rend^N\,\,\,\,\,\,{\rm with}\,\,\,\,\, u_i=(a_{1i}, a _{2i})$$ satisfy 
\begin{align*}||\mathbf{u}_1||\dots ||\mathbf{u}_{s}||\frac{|\mathbf{L}_i(\mathbf{u}_k)|}{||\mathbf{u}_k||}\le &
|\tau|^{s-1}||{u}_1||\dots ||{u}_{s}||\left(\frac{|{L}_i({u}_k)|+|\tau| |{L'}_i({u}_k)|}{||u_k||}\right)\\\le& 2 |\tau|^{s}(2(2N)^2N)^{\frac{1}{m}} T^{1-\frac{N}{ms}}.\end{align*}
This concludes the proof.
\end{proof}

 We are now ready to prove Proposition \ref{main}.

\begin{proof}[Proof of  Proposition \ref{main}]
 If $E$  is non  CM then this is proven in   \cite{ExpTAC} Proposition 3.1. Note that we used there another normalization for the   height functions (namely, the $\hat h$ used in the present paper is 3 times the one used in \cite{ExpTAC}), however the bounds in that proposition are sharper than  those presented here which  hold in both cases.

Assume now that $E$ has CM. Let $\mathbf{L}_1,\ldots,\mathbf{L}_{m}\in K[X_1,\ldots,X_N]$ be the linear forms constructed in Lemma \ref{LemmaHab1}.  Then
\begin{equation}
\label{uno}
\hat h(\mathbf{u}(P)) \leq  C_2(m) |D_K|^2 N^2 \max_{1\leq j\leq m}\{||\mathbf{L}_j({\mathbf{u}_k})||^2\}\hat h (P)
\end{equation} for all $\mathbf{u}\in \rend^N$.

By Lemma \ref{LemmaHab3} applied with $\sqrt{T}$,  there exist $s$ vectors $\mathbf{u}_1,\ldots,\mathbf{u}_s\in\rend^N$ such that 
\begin{equation}||\mathbf{u}_1||^2\cdots ||\mathbf{u}_s||^2 \leq  |\tau|^{2s} T\end{equation} and 
\begin{equation} ||{\mathbf{u}_1}||^2\dotsm ||{\mathbf{u}_s}||^2\frac{||{\mathbf{L}_j(\mathbf{u}_k)}||^2}{||{\mathbf{u}_k}||^2}\leq  4|\tau|^{2s}(2(2N)^{2N})^{\frac{2}{m}} T^{1-\frac{N}{ms}}
\end{equation}

and from (\ref{uno}) we have 
\begin{equation}
\hat h(\mathbf{u}_k(P)) \leq  C_2(m) |D_K|^2 N^2 \max_{1\leq j\leq m}\{||\mathbf{L}_j({\mathbf{u}_k})||^2\}\hat h (P).
\end{equation}

Consider the  algebraic subgroup defined by the equations $\mathbf{u}_1(\mathbf{X})=0,\ldots,\mathbf{u}_s(\mathbf{X})=0$ and call $H$ the irreducible component containing $0$. Since the $\mathbf{u}_i$ are $K$-linearly independent the codimension of $H$ is $s$. Moreover, by \eqref{gradorighe} its degree is bounded as
\[\deg(H+P)\leq  \left(3^N N! (12^{N-1}2)^s \right) |\tau|^{2s} T.\]
This proves the first bound because $|\tau|^2\le |D_K|$.

Substituting the above estimates in  Proposition \ref{stimaaltezzanormalizzatatraslato}, we obtain that
\begin{align*}
 h_2(H+P)\leq&\frac{C_1(N,s)C_2(m)N^2 s 2^{N+2} (2(2N)^{2N})^{\frac{2}{m}}}{\omega_{2(N-s)}\omega_{2s} } |D_K|^{\frac{N}{2}+s+2}T^{1-\frac{N}{ms}}\hat h(P)+\\
&
+ C_1(N,s)\ccinque(E) |D_K|^s T.
\end{align*}

\end{proof}

\section{The proof of the main theorem}\label{DIMO}

In this section we prove our main theorem. We divide the theorem into two parts. The first is the case of transverse curves, the second is the general case.
At the end of the section we also prove  Corollaries \ref{caso_poly} and \ref{cardinalita}.
\subsection{Transverse curves} 
\begin{thm}\label{casotras}
Let $\Ci$ be a curve  transverse in $E^N$,  let  $K=\mathrm{Frac}(\rend)$ with discriminant $D_K$. Then the set of points of $\Ci$ of rank $r\le N-1$ has N\'eron-Tate height bounded as 
\begin{align*}
 \hat{h}(P)\leq& |D_K|^{\frac{2N+Nr+4r}{2(N-r)}}\left(c_1(N,r)  h_2(\Ci)(\deg \Ci)^{\frac{r}{N-r}}+c_2(N,r,E) (\deg \Ci)^{{\frac{N}{N-r}}}\right)+N^2\cquattro(E),
\end{align*}
where \begin{align*}
&c_1(N,r)=\frac{2^{\frac{2N^2+3N+Nr-4r^2+7r+2}{N-r}} 3^{\frac{(2N-1)N}{N-r}}}{(\omega_{2(N-1)}\omega_2)^{\frac{r}{N-r}}} \frac{N^{\frac{5N+4r}{N-r}}}{(N-1)^{\frac{r}{N-r}}} 
(N!)^{\frac{N}{N-r}}(2r!)^{\frac{4r}{N-r}}r^{\frac{3r}{N-r}} \\
& c_2(N,r,E)=c_1(N,r)\left(N^2 \ccinque(E)+\frac{H_N+\log2\left(2(3^N-1)-N\right)}{2}\right),
\end{align*}
 $\omega_{2n}=\pi^n/n!$, $H_N$ is the $N$-th harmonic number and $\ccinque(E)$  is the explicit constant defined in Proposition \ref{confrontoaltezze} and it depends only on the coefficients of $E$.
\end{thm}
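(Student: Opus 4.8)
\textbf{Proof plan for Theorem \ref{casotras}.}

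The plan is to run a Vojta-style / Dani type inequality argument, obtaining the height bound by comparing a point $P\in\Ci$ of rank $r$ with the auxiliary translate $H+P$ produced in Proposition \ref{main}, and then optimising over the free parameter $T$. First I would take $P\in\Ci$ of rank $r\le N-1$, so by definition $P$ lies in a torsion variety $B$ of dimension $m=r$; since $\Ci$ is transverse, $\Ci\not\subset B$ and more importantly $\Ci\not\subset H+P$ for the abelian subvariety $H$ of codimension $s$ produced by Proposition \ref{main} (here one chooses $s$ so that $\dim H = N-s$ is large enough that $H+P$ is a \emph{proper} subvariety but $H+P$ still meets $\Ci$ in a controlled way; the natural choice, as in \cite{ExpTAC}, is $s=N-r$, equivalently $\dim(H+P)=r\ge 1$, so that generically $\Ci\cap(H+P)$ is finite). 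Then $P$ is an isolated point of $\Ci\cap(H+P)$, and the Arithmetic B\'ezout Theorem \ref{AriBez} applied to $X=\Ci$ and $Y=H+P$ in $\P_{3^N-1}$ bounds
\[
h_2(P)\le \sum_i h_2(Z_i)\le \deg\Ci\, h_2(H+P)+\deg(H+P)\, h_2(\Ci)+\csei(1,r,3^N-1)\deg\Ci\,\deg(H+P).
\]

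Next I would insert the two estimates of Proposition \ref{main} with $m=r$ and $s=N-r$:
\[
\deg(H+P)\le C_3(N,N-r)\,|D_K|^{N-r}\,T,\qquad
h_2(H+P)\le C_4\,|D_K|^{\frac N2+(N-r)+2}\,T^{1-\frac{N}{r(N-r)}}\,\hat h(P)+C_5\,|D_K|^{N-r}\,T.
\]
Substituting, one gets an inequality of the shape $h_2(P)\le A\,T^{1-\frac{N}{r(N-r)}}\hat h(P)+ B\,T$, where $A$ carries the factor $\deg\Ci\cdot|D_K|^{\frac N2+N-r+2}\cdot C_4$ and $B$ carries $\deg\Ci\cdot h_2(\Ci)\cdot|D_K|^{N-r}$ together with the B\'ezout constant terms, all multiplied by $\deg\Ci$. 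Then I would pass from $h_2(P)$ to $\hat h(P)$ via Proposition \ref{confrontoaltezze} (costing $NC(E)$, hence the $N^2C(E)$-type term — note $N\cdot N$ because one absorbs an extra factor $N$ in cleaning up), and crucially \emph{absorb the $A\,T^{1-\frac{N}{r(N-r)}}\hat h(P)$ term into the left side}: choose $T$ large enough (a fixed multiple of a power of $\deg\Ci$, $h_2(\Ci)$ and $|D_K|$) so that $A\,T^{1-\frac{N}{r(N-r)}}\le \frac12$. Since the exponent $1-\frac{N}{r(N-r)}$ is negative exactly when $r(N-r)<N$, which for $1\le r\le N-1$ forces $r=1$... here one must instead, following \cite{ExpTAC}, not take a single translate but intersect with the translate several times or rather use $s=N-r$ and track that the relevant exponent in $T$ from the combined argument is $1-\frac{N}{ms}=1-\frac{N}{r(N-r)}$, and the optimisation is done so that the two terms $AT^{1-\frac{N}{r(N-r)}}\hat h(P)$ and the target bound balance; explicitly one sets $T$ to a power of $h_2(\Ci)(\deg\Ci)^{?}|D_K|^{?}$ making $A T^{-\frac{N}{r(N-r)}-0}$ small and $B T$ of the desired size. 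After the dust settles, the optimal $T$ is essentially $T\asymp\bigl(\text{const}\cdot|D_K|^{\frac N2+N-r+2}\deg\Ci\bigr)^{\frac{r(N-r)}{N}}$ times a factor involving $h_2(\Ci)$, and plugging back yields $\hat h(P)\le |D_K|^{\frac{2N+Nr+4r}{2(N-r)}}\bigl(c_1 h_2(\Ci)(\deg\Ci)^{\frac{r}{N-r}}+c_2(\deg\Ci)^{\frac{N}{N-r}}\bigr)+N^2C(E)$, with $c_1,c_2$ as in the statement.

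The bookkeeping of the constants is the bulk of the work but is routine: one must carefully carry $C_1,\dots,C_5$ through Proposition \ref{main}, the volumes $\omega_{2n}=\pi^n/n!$, the B\'ezout constant $\csei(1,r,3^N-1)=\sum_{i=0}^1\sum_{j=0}^r\frac1{2(i+j+1)}+(3^N-1-\frac{1+r}{2})\log 2 = \frac12(H_{r+2}+H_{r+1})+\cdots$ — comparing with the claimed $\frac{H_N+\log2(2(3^N-1)-N)}{2}$ one simply bounds $H_{r+1},H_{r+2}\le H_N$ (or rather uses $r\le N$) and collects the $\log 2$ terms — and finally raise everything to the power $\frac{1}{N-r}$ coming from solving for $T$, which is exactly why $c_1(N,r)$ and $c_2(N,r,E)$ appear as $(N-r)$-th roots of the products of these constants. \textbf{The main obstacle} I anticipate is not any single estimate but rather organising the optimisation in $T$ so that \emph{every} power of $|D_K|$, $\deg\Ci$, $N$ and the various factorials lands with exactly the exponent claimed; in particular verifying that the $|D_K|$-exponent $\frac N2+(N-r)+2$ coming from $C_4$ in Proposition \ref{main}, once multiplied by $\frac{r(N-r)}{N}$ (from the choice of $T$) and then by $\frac1{N-r}$ (from solving the final inequality), combines with the $|D_K|^{N-r}$ from $\deg(H+P)$ to produce precisely $\frac{2N+Nr+4r}{2(N-r)}$ — a short but error-prone arithmetic check — and similarly that the genus hypothesis (genus $\ge 2$, equivalently $\deg\Ci\ge$ something, or rather the transversality which forces $\Ci\not\subset H+P$) is what guarantees $P$ is genuinely isolated in $\Ci\cap(H+P)$ so that B\'ezout applies to $P$ itself. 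I would also need the small technical point that when $\dim(H+P)=r$ and $\Ci$ is a transverse curve, a translate of dimension $r\ge 1$ containing $\Ci$ is impossible, so $\Ci\cap(H+P)$ is indeed $0$-dimensional near $P$ — this uses transversality of $\Ci$ directly.
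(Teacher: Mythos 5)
Your overall architecture (Proposition \ref{main} for an auxiliary translate through $P$, Arithmetic B\'ezout on $\Ci\cap(H+P)$, passage from $h_2$ to $\hat h$ via Proposition \ref{confrontoaltezze}, then absorption of the $\hat h(P)$ term by choosing $T$) is the right one, but your choice of parameters breaks the crucial absorption step. You take $s=N-r$, so the exponent of $T$ multiplying $\hat h(P)$ is $1-\frac{N}{ms}=1-\frac{N}{r(N-r)}$, which you yourself notice is negative only when $r(N-r)<N$ — already false for, say, $N=5$, $r=2$. When that exponent is nonnegative you cannot make the coefficient of $\hat h(P)$ small by enlarging $T$ (and shrinking $T$ is blocked by $T\geq 1$ and by the factor $\deg\Ci$ in that coefficient), and the vague fix "intersect with the translate several times or rather use $s=N-r$ and track the exponent" does not resolve anything: the inequality $A\,T^{1-\frac{N}{r(N-r)}}\leq\frac12$ simply fails for large $\deg\Ci$. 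The paper instead applies Proposition \ref{main} with $m=r$ and $s=1$: the auxiliary object is a \emph{codimension-one} translate $H+P$, the exponent is $1-\frac{N}{r}<0$ for every $r\leq N-1$, and the choice $T=\left(\frac{N}{N-1}\beta\deg\Ci\right)^{\frac{r}{N-r}}$ makes the coefficient of $\hat h(P)$ exactly $\frac{N-1}{N}$, after which one solves for $\hat h(P)$. Note also that your motivation for $s=N-r$ — wanting $\Ci\cap(H+P)$ to be finite — is unnecessary: B\'ezout bounds the sum of $h_2$ over \emph{all} irreducible components, so it suffices that $\{P\}$ is itself a component, and this follows for any proper translate from transversality ($\Ci\subset H+P$ is impossible), with no condition on $\dim(H+P)$.

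A concrete symptom that the parameters are off: the exponent $\frac{2N+Nr+4r}{2(N-r)}$ of $|D_K|$ in the statement is exactly what the $s=1$ bookkeeping produces, namely $|D_K|^{1}$ from $\deg(H+P)\leq\alpha T$ times $\left(|D_K|^{\frac N2+3}\right)^{\frac{r}{N-r}}$ from $\beta^{\frac{r}{N-r}}$ in the chosen $T$, giving $1+\left(\frac N2+3\right)\frac{r}{N-r}=\frac{2N+Nr+4r}{2(N-r)}$; with your $s=N-r$ the powers $|D_K|^{N-r}$ and $|D_K|^{\frac N2+(N-r)+2}$ cannot recombine to this value, so the "after the dust settles" matching you assert would not occur. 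With $s$ corrected to $1$ (and $m=r$), the rest of your outline — B\'ezout with $\csei$, bounding it by $\frac{H_N+\log 2\,(2(3^N-1)-N)}{2}$, the comparison $\hat h(P)\leq h_2(P)+N\cquattro(E)$, and raising $\beta\deg\Ci$ to the power $\frac{r}{N-r}$ — is essentially the paper's proof.
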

\begin{proof}
 By assumption, the point $P$ has rank $r\le N-1$, then $P\in B$ where $B$ is a torsion variety with $\dim B= r$.

We apply Proposition \ref{main} to $P$,  with $m=r$, $s=1$ and $T$ as a free parameter that will be specified later. This gives a translate $H+P$ of dimension $N-1$, of degree explicitly bounded in terms of $T$ and  such that $h_2(H+P)$ is explicitly bounded  in terms of $\deg H$ and $\hat h(P)$ and $T$.
More precisely for 
\begin{align}
\label{boalfa}&\alpha=  (N!) 3^{2N-1} 2^{2N-1} |D_K| ,\\
\label{bobeta}&\beta= (N!) N^{\frac{4(N+r)}{r}} (2r)!^4 r^3 \frac{2^{3N-4r+6+\frac{2(2N+1)}{r}}3^{2N-1}}{\omega_{2(N-1)}\omega_2} |D_K|^{\frac{N}{2}+3}, \\
&\gamma= N^2 \ccinque(E) \alpha
\end{align}
the degree and the height of the translate $H+P$  are bounded by  Proposition \ref{main} with $m=r$ and  $s=1$ as
 \begin{equation}\label{bodeg}
 \deg (H+P)\leq \alpha T
\end{equation}
 and 
\begin{equation}\label{boh2}
 h_2(H+P)\leq \beta T^{1-\frac{N}{r}}\hat h(P)+\gamma T.
\end{equation}

We remark that $P$ is a component of $\Ci\cap (H+P)$.
{If not, then $\Ci\subset  H+P$ contradicting the transversality of $\Ci$. }
So, we  apply the Arithmetic B\'ezout  Theorem to $\Ci\cap (H+P)$ to bound the height of $P$. We have 
$$h_2(P) \le h_2(\Ci)\deg H +\deg \Ci h_2(H+P)+\csei\deg \Ci \deg H,$$
where \[\csei=\left(\frac{H_N+\log2\left(2(3^N-1)-N\right)}{2}\right)\] is given in \eqref{costaBez} and $H_N$ denotes the $N$-th harmonic number.
Substituting the estimates from \eqref{bodeg} and \eqref{boh2} above, we obtain

\begin{align}\label{boundpunto51b}
h_2(P)\leq& h_2(\Ci)\alpha T+\deg \Ci \left( \frac{\beta}{ T^{{(N-r)}/r}}\hat h(P)+\gamma T \right)+\\ \notag&+\csei\alpha T \deg \Ci
\end{align}

We now choose 
\begin{equation*}
 T=\left(\frac{N}{N-1}\beta \deg C\right)^{\frac{r}{N-r}},
\end{equation*}
so that the coefficient of $\hat h(P)$ at the right-hand side of \eqref{boundpunto51b} becomes $(N-1)/N<1$.

 Recall that by Proposition \ref{confrontoaltezze}  we have
\[\hat{h}(P)\leq h_2(P)+N\cquattro(E)\]
where 
$$C(E)=\frac{h_W(\Delta)+3h(j)}{4}+\frac{h_W(A)+h_W(B)}{2}+4$$
with  $A$ and $B$ the  coefficients of the  Weierstrass equation  for $E$ and $\Delta$ and $j$  the discriminant and the $j$-invariant of $E$.        
 Then we get
\begin{align*}
\hat{h}(P)\leq& \frac{(N-1)}{N} \hat{h}(P)+h_2(\Ci)\alpha T+\deg \Ci \gamma T+\csei\alpha T \deg \Ci+N\cquattro(E)
\end{align*}
and hence 
\begin{align*}
 \hat{h}(P)\leq& N h_2(\Ci)\alpha T+N(\gamma+ \csei\alpha)T \deg \Ci+N^2\cquattro(E)\leq \\
 \leq& \delta h_2(\Ci) (\deg \Ci)^{\frac{r}{N-r}}+\delta (N^2 \ccinque(E)+ \csei) (\deg \Ci)^{\frac{N}{N-r}}+\\
&+N^2\cquattro(E)
\end{align*} 
where $$\delta=N \alpha \left(\frac{N}{N-1} \beta\right)^{\frac{r}{N-r}}.$$
Substituting the values of $\alpha$ from \eqref{boalfa} and $\beta$ from \eqref{bobeta} we get  the desired bound for $\hat{h}(P)$. This concludes the proof.

\end{proof}

\subsection{The general case}

 We can now conclude the proof of Theorem \ref{teorema}, reducing the case of a general curve to the  transverse case with a geometric induction.

\begin{thm}\label{thmgen} 
Let $\Ci$ be a curve of genus at least $2$ embedded in $E^N$ and let $K=\mathrm{Frac}(\rend)$  with discriminant $D_K$. Then the set of points  of $\Ci$ of rank $r< \max(r_\Ci-t_\Ci, t_\Ci)$ has  N\'eron-Tate height bounded as 
\begin{align*}
\hat{h}(P)\leq& (N-r) |D_K|^{\frac{2t_\Ci+t_\Ci r+4r}{2(t_\Ci-r)}}\left(c_1(t_\Ci,r)  h_2(\Ci)(\deg \Ci)^{\frac{r}{t_\Ci-r}}+c_2(t_\Ci,r,E)(\deg \Ci)^{{\frac{t_\Ci}{t_\Ci-r}}}\right)+\\
&+(N-r)t_\Ci^2\cquattro(E)+\frac{h(\Ci)}{\deg \Ci},
\end{align*}
where the constants $c_1(t_\Ci,r)$ and $c_2(t_\Ci,r,E)$ are computed in Theorem \ref{casotras}  and  $\cquattro(E)$ is defined in Proposition \ref{confrontoaltezze}.

\end{thm}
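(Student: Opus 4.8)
The strategy is to reduce the general case to the transverse case (Theorem \ref{casotras}) by a geometric induction on the ambient dimension, following the scheme already alluded to in the introduction. Since $\Ci$ has genus at least $2$, it cannot be contained in a translate of dimension $1$, so $t_\Ci\geq 2$; similarly $r_\Ci\geq 2$. Let $P\in\Ci$ be a point of rank $r<\max(r_\Ci-t_\Ci,t_\Ci)$. There are two cases according to which term realises the maximum.

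\textbf{Case $r<t_\Ci$.} Here $\Ci$ sits inside a translate $\mathcal{B}=H_0+Q$ of dimension $t_\Ci$, and it is transverse \emph{in} this translate (by minimality of $t_\Ci$). After translating by $-Q$ and using the structure of $H_0$ as an abelian subvariety of $E^N$, one identifies $H_0$ with a factor (up to isogeny) of $E^N$, which one can take to be $E^{t_\Ci}$; the degree and height of $\Ci$ change in a controlled way under this isogeny, and the rank of $P$ is preserved (translation by $Q$ only shifts by a point, but since $\Ci$ has genus $\geq 2$ the relevant translate actually contains a torsion translate of an abelian subvariety through which $P$ has small rank — one argues that $P-Q$ still has rank $\leq r$ in $H_0$, up to a bounded loss). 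Then Theorem \ref{casotras} applied inside $E^{t_\Ci}$ with $N$ replaced by $t_\Ci$ gives the bound with $|D_K|^{\frac{2t_\Ci+t_\Ci r+4r}{2(t_\Ci-r)}}$, $c_1(t_\Ci,r)$, $c_2(t_\Ci,r,E)$, and the extra factor $(N-r)$ together with $(N-r)t_\Ci^2 C(E)$ absorbs the passage from $\hat h$ on $E^{t_\Ci}$ back to $E^N$ and the comparison of heights; the term $h(\Ci)/\deg\Ci$ appears because Theorem \ref{casotras} is stated with $h_2(\Ci)$ and we need $h(\Ci)$, so Zhang's inequality \eqref{Zhangh^}--\eqref{mu2mu^} is invoked to pass between the normalised height $h_2$ and the canonical height $h$, picking up $\hat\mu(\Ci)\le h(\Ci)/\deg\Ci$.

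\textbf{Case $t_\Ci\leq r<r_\Ci-t_\Ci$.} Now $\Ci$ is contained in a torsion variety $\mathcal{T}$ of dimension $r_\Ci$ but in no torsion variety of smaller dimension, and $P$ has rank $r$, so $P$ lies in a torsion variety $\mathcal{T}_P$ of dimension $r$. The key point is that $P\in\Ci\cap\mathcal{T}_P$ and, since $r<r_\Ci$ (as $r<r_\Ci-t_\Ci\le r_\Ci$) and more crucially $r<r_\Ci-t_\Ci$, one shows $\Ci\not\subset\mathcal{T}_P$, so $P$ is an isolated point of the intersection; but to get a height bound one wants to land back in a transverse situation. One projects: consider the quotient $\pi\colon E^N\to E^N/\mathcal{G}$ by a suitable abelian subvariety $\mathcal{G}$ so that $\pi(\Ci)$ becomes transverse (or a translate inside which it is transverse) in the quotient, of dimension controlled by $r_\Ci$ and $t_\Ci$; the inequality $r<r_\Ci-t_\Ci$ is exactly what guarantees that after this projection the image point still has small enough rank relative to the new ambient dimension for Theorem \ref{casotras} (applied in dimension $t_\Ci$, since $\pi(\Ci)$ is transverse in a translate of dimension $t_{\pi(\Ci)}\le t_\Ci$) to apply. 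One then transfers the height bound back along $\pi$, again at the cost of the factor $(N-r)$ and the additive $C(E)$-terms coming from height comparisons under isogenies, and the $h(\Ci)/\deg\Ci$ term as before.

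\textbf{Main obstacle.} The delicate point is the bookkeeping in the reduction: one must verify that under the isogeny/projection steps the invariants $\deg\Ci$, $h_2(\Ci)$, $h(\Ci)$, $t_\Ci$, $r_\Ci$ and the rank of $P$ all transform so that plugging into Theorem \ref{casotras} yields \emph{exactly} the stated constants, in particular that $t_\Ci$ (not $N$) is the right dimension to feed in, and that the genus $\geq 2$ hypothesis is used correctly to exclude the degenerate low-dimensional cases and to ensure $\Ci$ is not itself a translate or torsion variety. Controlling the degree and height of the image of $\Ci$ under an isogeny or projection — using the Arithmetic B\'ezout Theorem \ref{AriBez} and Zhang's inequality — without blowing up the constants is the technical heart; the rank comparison under projection (that a point of rank $r$ maps to a point of rank $\le r$, with equality generically, and the compatibility with $r_\Ci-t_\Ci$) is the conceptual heart.
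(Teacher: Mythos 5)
Your overall plan (reduce to Theorem \ref{casotras} in dimension $t_\Ci$ via a projection) is the right one, but your case analysis misses the single observation that makes the theorem work, and your reduction leaves the genuinely hard step unaddressed. Writing $\Ci=\Ci_0+Q$ with $\Ci_0$ transverse in the minimal translate $H_0+Q$ and $Q\in H_0^\perp$, one shows that $\rank Q=r_\Ci-t_\Ci$ (the minimal torsion variety containing $\Ci$ is $H_0+B_Q$), hence \emph{every} point of $\Ci$ has rank at least $r_\Ci-t_\Ci$. Consequently the case $r_\Ci-t_\Ci\geq t_\Ci$, and in particular your ``Case $t_\Ci\leq r<r_\Ci-t_\Ci$'', is vacuous: there are no such points on $\Ci$ at all. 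Your proposal instead tries to prove a height bound for these nonexistent points by an unspecified quotient $E^N\to E^N/\mathcal{G}$ ``so that $\pi(\Ci)$ becomes transverse'' with the rank condition ``guaranteeing'' applicability of Theorem \ref{casotras}; none of these claims is substantiated, and no such construction is needed.

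In the remaining case $r<t_\Ci$ the conceptual gap in your sketch is the transfer of the height bound from the image point back to $P$: for a general isogeny or projection, a bound on $\hat h(\pi(P))$ gives no control on $\hat h(P)$, so the phrase ``the extra factor $(N-r)$ \dots absorbs the passage from $\hat h$ on $E^{t_\Ci}$ back to $E^N$'' is exactly what has to be proved. The paper handles this by writing $P=P_0+Q$ with $\hat h(P)=\hat h(P_0)+\hat h(Q)$ (orthogonality, \cite{preprintPhilippon}); if $P_0$ is torsion one concludes at once, and otherwise one performs a Gauss reduction on the matrices defining $H_0$ and $H_0^\perp$ so that the coordinate projection $\pi:E^N\to E^{t_\Ci}$ is surjective on $H_0$ (whence $\pi(\Ci)$ is transverse, and $\deg\pi(\Ci)\leq\deg\Ci$, $h(\pi(\Ci))\leq h(\Ci)$ by Masser--W\"ustholz \cite{MW90}) \emph{and} retains the coordinate of $P_0$ of maximal height, which is what yields $\hat h(P_0)\leq(N-r)\hat h(\pi(P))$. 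Your appeal to an isogeny identification of $H_0$ with $E^{t_\Ci}$ with ``controlled'' degree/height change, and to Arithmetic B\'ezout, does not supply this. Finally, the term $h(\Ci)/\deg\Ci$ is not a $h_2$-versus-$h$ conversion loss as you claim: it is the bound $\hat h(Q)\leq h(\Ci)/\deg\Ci$ for the translating point $Q$, obtained from Zhang's inequality applied to $\Ci$, added to $\hat h(P_0)$ in the orthogonal decomposition.
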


\begin{proof}
 Let $H_0+Q$ be a translate of minimal dimension $t_\Ci$ containing $\Ci$, with $H_0$ an abelian subvariety of $E^N$ and $Q$ a point in $H^{\perp}_0$. Then $\Ci=\Ci_0+Q$ with $\Ci_0$ transverse in $H_0$ and so the rank of any point of $\Ci$ is at least $\rank Q$. Thus there are no points on $\Ci$ of rank $<\rank Q$.  Moreover $\rank Q=r_\Ci-t_\Ci$. Indeed if $B_Q$ is the torsion variety of minimal dimension containing $Q$, then by definition of rank $\dim B_Q=\rank Q$. Thus $\Ci$ is contained in the torsion variety $H_0+B_Q$ of dimension $t_\Ci+\rank Q$. On the other hand, if $B$ is the torsion variety of minimal dimension containing $\Ci$, then $\Ci-Q=\Ci_0\subset  B-Q$. Thus $H_0\subset  B-Q$ and $H_0+Q\subset  B$. Since $H_0$ is an abelian variety then $Q\in B$. Thus $H_0+B_Q\subset  B$. And equality follows by the minimality of $B$. So $r_\Ci=t_\Ci+\rank Q$.
 
 In conclusion there are no points of rank $<\rank Q=r_\Ci-t_\Ci$. This solves the case $r_\Ci-t_\Ci \ge t_\Ci$.\\
 
  We now assume $t_\Ci > r_\Ci-t_\Ci$. The torsion variety $H_0$ is a component of the kernel of a morphism  given by an $(N-t_\Ci) \times N$-matrix $\Phi$ with entries in $\rend$ and of rank $N-t_\Ci$, as described in Section \ref{prelim-S}.  Then, there exists an orthogonal complement matrix $\Phi^\perp$ with entries in ${\rm End}(E)$. Let $P\in \Ci$ be a point of rank $r$, then $P=P_0+Q$ with $P_0\in H_0$ and $\hat{h}(P)=\hat{h}(P_0+Q)=\hat{h}(P_0)+\hat{h}(Q)$ by \cite{preprintPhilippon}. If $P_0$ is a torsion point then we can directly conclude, indeed $\hat{h}(P)=\hat{h}(Q) \le h(\Ci)/\deg \Ci$ by Zhang's inequality.
We may then assume that $P_0$ is  non-torsion. Let $P_{i_0}$ be the coordinate of $P_0$ of maximal height, then $P_{i_0}$ is non-torsion.

Using basic  arguments of linear algebra and a Gauss-reduction process  (which might also determine a  reordering  of the coordinates) we can reduce  the matrix $\Phi$  to the form
 \begin{equation*}
M=
 \left(\begin{array}{cccccc}
a_1&\dots &0&a_{1,N-t_\Ci}&\dots &a_{1,N}\\
\vdots & & \vdots & \vdots& & \vdots\\
0&\dots &a_{N-t_\Ci}&a_{N-t_\Ci,N-t_\Ci+1}&\dots &a_{N-t_\Ci,N}
\end{array}\right),
\end{equation*} 
with $a_i\not=0$ for all $i$ and  the matrix  $\Phi^{\perp}$  to the form
 \begin{equation*}
M^\perp=
 \left(\begin{array}{cccccc}
b_{1,1}&\dots &b_{1,t_\Ci}&b_1&\dots &0\\
\vdots & & \vdots & \vdots& & \vdots\\
b_{t_\Ci, 1}&\dots &b_{t_\Ci,t_\Ci}&0&\dots &b_{t_\Ci}
\end{array}\right),
\end{equation*} 
with $b_i\not=0$ for all $i$.

In addition we can assume that the  $(N-t_\Ci+1)$-th column of the reduced matrices $M$ and $M^{\perp}$ corresponds to the $i_0$ coordinate of the starting order of the factors in $E^N$. Suppose not, then if the $i_0$ coordinate corresponds to one of the last $t_\Ci$  columns it is sufficient  a  reordering of the columns (which corresponds to a permutation of the factors of $E^N$) and of the rows of $M^\perp$. Suppose now that the $i_0$-th coordinate corresponds to the column $j\le N-t_\Ci$, then since $P_{i_0}$ is a non torsion point there exists a non-trivial  entry $M_{jk}\not=0$ with  $N-t_\Ci<k\le N$. In addition $b_k\not= 0$ and by the orthogonality assumption $M_{jk}^\perp\not=0$.  Thus we  can   exchange the $j$-th and $k$-th columns of $M$ and $M^\perp$ and operate a new Gauss reduction to obtain the  matrix $M$ and $M^\perp$ with the wished properties.

Passing to the Lie algebras, the  kernels of these matrices determine 
 the Lie algebra of  $H_0$ and $ H_0^\perp$ as sub-Lie algebras of $E^N$.  Thus the rows of $M^\perp$ are a basis of the Lie algebra of $H_0$. Since  the last $t_\Ci$ columns of $M^\perp$  form an invertible minor, the projection  of the last $t_\Ci$ coordinates of $Lie(H_0)$  on $\C^{t_\Ci}$ is surjective.  Thus    the projection $\pi: E^N \to E^{t_\Ci}$  on the last $t_\Ci$ coordinates is surjective when restricted to  $H_0$ and consequently $\pi(\Ci)=\pi(\Ci_0)$ is transverse in $E^{t_\Ci}$. 
By  a result of Masser and W\"ustholz \cite{MW90}, Lemma 2.1, we have that $\deg \pi(\Ci)\le \deg \Ci$. 
In addition $h(\pi(\Ci)) \le h(\Ci)$. 

We can now apply Theorem \ref{casotras} to the curve $\pi(\Ci)$ in $E^{t_\Ci}$ obtaining
\begin{align}\label{boundpiP}
 \hat{h}(\pi(P))\leq& |D_K|^{\frac{2t_\Ci+t_\Ci r+4r}{2(t_\Ci-r)}}\left(c_1(t_\Ci,r) h_2(\Ci)(\deg \Ci)^{\frac{r}{t_\Ci-r}}+c_2(t_\Ci,r,E)(\deg \Ci)^{{\frac{t_\Ci}{t_\Ci-r}}}\right)+t_\Ci^2\cquattro(E),
\end{align}
where the constants $c_1(t_\Ci,r)$ and $c_2(t_\Ci,r,E)$ are computed in Theorem \ref{casotras}  and  $\cquattro(E)$ is defined in Proposition \ref{confrontoaltezze}.
We finally remark that 
$$\hat{h}(P)=\hat{h}(P_0+Q)=\hat{h}(P_0)+\hat{h}(Q)\le (N-r) \hat{h}(\pi(P))+\hat{h}(Q)$$
where the last inequality is due to the fact that the $(N-t_\Ci+1)$-th coordinate of $P_0$ has maximal height.
Recall that by the  Zhang inequality $\hat{h}(Q)\le h(\Ci)/\deg \Ci$.
So
$$\hat{h}(P)\le \hat{h}(P_0)+\hat{h}(Q)\le (N-r) \hat{h}(\pi(P))+\frac{h(\Ci)}{\deg \Ci}.$$
Replacing (\ref{boundpiP}) we obtain the bound in the statement.

\end{proof}

\subsection{The Proof of Corollary \ref{caso_poly}}\label{esempiE2}
In \cite{EsMordell}, Theorem 6.2, we proved that the curve $\mathcal{C}$ is transverse in $E^2$ and its degree and height are bounded as
\[\deg\Ci=6n+9,\] \[h_2(\Ci)\leq 6(2n+3)\left(h_W(p)+\log m+2\cotto(E)\right)\]
where $h_W(p)=h_W(1:p_0:\ldots:p_n)$ is the height of the polynomial $p(X)$ and $\cotto(E)$ is given in Proposition \ref{confrontoaltezze}.
Applying Theorem \ref{casotras} to $\Ci$ in $E^2$ (with $N=2$ and $r=1$), we get that  
\begin{align*}
 \hat{h}(P)\leq& |D_K|^{5} 2^{41} 3^4 \left(h_2(\Ci)(\deg \Ci)+ \left(4\ccinque(E)+5.61\right)(\deg \Ci)^{2}\right)+4\cquattro(E).
\end{align*}
 Substituting the values of $\deg \Ci$ and $h_2(\Ci)$  we conclude the proof.
$\qedhere$

\subsection{The proof of Corollary \ref{cardinalita}}\label{dimocorcard}
To prove our corollary we use the same projection $\pi: E^N \to E^{t_\Ci}$ used in the  proof of Theorem \ref{thmgen}. Then
$\pi(\Ci)$ is transverse in  $E^{t_\Ci}$. In addition  $\deg \pi(\Ci)\le \deg \Ci$ and  $h(\pi(\Ci)) \le h(\Ci)$.    The proof of Theorem \ref{thmgen} also shows that there are no points on $\Ci$ of rank $<r_\Ci-t_\Ci$. We then apply    Theorem \ref{casotras}  and Theorem 1.3 of \cite{ioant}  to the curve $\pi(\Ci)$ in $E^{t_\Ci}$  to bound the number of points of rank $< t_\Ci$ on $\pi(\Ci)$. Note that  the rank of the projection of a point is less or equal than the rank of the point. Moreover the fiber of $\pi$ restricted to $\Ci$ has cardinality $\le \deg \Ci \deg E^{N-t_\Ci}$ by  B\'ezout theorem.}  This bounds the number of points on $\Ci$ of rank $<\max (r_\Ci-t_\Ci, t_\Ci)$ in terms of $\deg \Ci$, $\deg \pi(\Ci)$, $h_2(\pi(\Ci))$, $E$ and $N$. We finally use  $\deg \pi(\Ci)\le \deg \Ci$ and  $h(\pi(\Ci)) \le h(\Ci)$ to conclude.

\section*{Acknowledgments} I  thank Sara Checcoli for her contribution on many computations and for several remarks. I also thank \"Ozlem Imamoglu for some nice discussions.

\def\cprime{$'$}
\providecommand{\bysame}{\leavevmode\hbox to3em{\hrulefill}\thinspace}
\providecommand{\MR}{\relax\ifhmode\unskip\space\fi MR }
\providecommand{\MRhref}[2]{%
  \href{http://www.ams.org/mathscinet-getitem?mr=#1}{#2}
}
\providecommand{\href}[2]{#2}

\section*{}

Evelina Viada:
Mathematisches Institut,
Georg-August-Universit\"at,
Bunsenstra\ss e 3-5,
D-D-37073, G\"ottingen,
Germany.

email: evelina.viada@math.ethz.ch


\begin{thebibliography}{BMZ07}


\bibitem[BG06]{BG06}
E. Bombieri, W. Gubler, \emph{Heights in Diophantine Geometry}, Cambridge University Press (2006).

\bibitem[BMZ99]{BMZ}
E. Bombieri, D. Masser, U. Zannier, {\em Intersecting a Curve with
    Algebraic Subgroups of Multiplicative Groups},
  Int. Math. Res. Not. 20 (1999).




\bibitem[BV83]{BomVal}
E. Bombieri, J. Vaaler, \emph{{On {S}iegel's lemma}}, {Invent.
  Math.} {{73}} ({1983}), no.~{1}, {11--32}.

\bibitem[BGS94]{BGSGreen}
J.-B. Bost, H. Gillet, C. Soul{\'e}, \emph{Heights of projective varieties
  and positive {G}reen forms}, J. Amer. Math. Soc. {7} (1994), no.~4,
  903--1027. 
 \bibitem[Cha41]{Chab}
 C. Chabauty, \emph{Sur les points rationnels des courbes alg{\'e}briques de genre sup{\'e}rieur {\`a} l'unit{\'e}},
Comptes Rendus Hebdomadaires des S{\'e}ances de l'Acad. des Sci., Paris
{212} (1941), 882--885.
 
 
 

\bibitem[CVV15]{ExpTAC} S. Checcoli, F. Veneziano, E. Viada, \emph{On the explicit Torsion Anomalous Conjecture}, To appear on Transactions of the AMS.

\bibitem[CVV16]{EsMordell} S. Checcoli, F. Veneziano, E. Viada, \emph{The explicit Mordell Conjecture for families of curves}, Preprint (2016).


\bibitem[CV14]{CV}
S. Checcoli, E. Viada, \emph{On the torsion anomalous conjecture in CM abelian varieties},  Pacific J. Math. {271} (2014), no.~2, 321--345.

\bibitem[Col85]{Coleman}
 R. F. Coleman, \emph{Effective Chabauty}, Duke Math. J. {52} (1985), 765--780.
 
\bibitem[Cox89]{Cox}  D. A. Cox, \emph{Primes of the Form $x^2+ny^2$}, Wiley, New York, (1989).

\bibitem[Dem68]{Demj}
 V. Dem’janenko, \emph{Rational points on a class of algebraic curves}, Amer. Math. Soc. Transl. {66} (1968), 246--272.


\bibitem[Hab08]{hab}
P. Habegger, \emph{Intersecting subvarieties of {$\mathbb{ G}^n_m$} with
  algebraic subgroups}, Math. Ann. {342} (2008), no.~2, 449--466.
 

  \bibitem[Fal83]{FaltingsTeo}
G. Faltings, \emph{Endlichkeitss\"atze f\"ur abelsche Variet\"aten \"uber Zahlk\"orpern} Invent. Math.  {73} (1983): 349--366.



 \bibitem[Kul99]{KuleszApplicazioneMD}
 L. Kulesz, \emph{Application de la m\'ethode de {D}em\cprime janenko-{M}anin \`a certaines familles de courbes de genre 2 et 3}, J. Number Theory \textbf{76} (1999), no.~1, 130--146. 
   
\bibitem[KMS04]{Kul3}L. Kulesz, G. Matera and E. Schost, \emph{Uniform bounds for the number of rational points of families of curves of genus 2}, 
Journal of Number Theory 108(2), (2004) 241--267.
\bibitem[Man69]{Manin}
J. Manin, \emph{The $p$-torsion of elliptic curves is uniformly bounded}, Isv. Akad. Nauk. SSSR Ser. Mat. {33} (1969);
Amer. Math. Soc. Transl., 433--438.

 \bibitem[MP10]{poonen}
  W. McCallum and B. Poonen, \emph{The method of {C}habauty and
  {C}oleman},  Explicit methods in number theory; rational points and diophantine equations, Panoramas et Synth{\`e}ses \textbf{36}, Soci{\'e}t{\'e} Math.\ de France (2012), 99--117.
\bibitem[MW90]{MW90}
D. Masser, G. W\"ustholz, \emph{Estimating isogenies on elliptic curves}, Invent. Math. {100} (1990), 1--24.


\bibitem[MW93]{Masserwustholz}
D. Masser, G. W{\"u}stholz, \emph{Periods and minimal abelian
  subvarieties}, Ann. of Math. (2) {137} (1993), no.~2, 407--458.
  \

\bibitem[Phi91]{patriceI}
P. Philippon, \emph{Sur des hauteurs alternatives. {I}}, Math. Ann. {289} (1991), no.~2, 255--284.

\bibitem[Phi95]{patrice}
P. Philippon, \emph{Sur des hauteurs alternatives. {III}}, J. Math. Pures
  Appl. (9) {74} (1995), no.~4, 345--365. 

\bibitem[Phi12]{preprintPhilippon}
P. Philippon, \emph{Sur une question d'orthogonalit{\'e} dans les puissances de courbes elliptiques}, (preprint), 2012 (hal--00801376).





  \bibitem[Ser89]{serre}
J.-P. Serre, \emph{Lectures on the {M}ordell-{W}eil theorem}, Aspects of
  Mathematics, E15, Friedr. Vieweg \& Sohn, Braunschweig, 1989, Translated from
  the French and edited by Martin Brown from notes by Michel Waldschmidt.
\bibitem[Sil86]{SilvermanArithmeticEllipticCurves}
J. H. Silverman, \emph{The arithmetic of elliptic curves}, Graduate Texts
  in Mathematics, vol. 106, Springer-Verlag, New York, 1986. 

\bibitem[Sil90]{SilvermanDifferenceHeights}
J. H. Silverman, \emph{The difference between the {W}eil height and the canonical
  height on elliptic curves}, Math. Comp. {55} (1990), no.~192,
  723--743. 
  
  \bibitem[Sto11]{Stoll}
M. Stoll, \emph{Rational points on curves}, Journal de Th\'eorie des Nombres de Bordeaux 23, 257-277 (2011).



\bibitem[Via03]{ioannali}
E. Viada, \emph{The intersection of a curve with algebraic subgroups in a
  product of elliptic curves}, Ann. Sc. Norm. Super. Pisa Cl. Sci. (5)
  {2} (2003), no.~1, 47--75. 

\bibitem[Via08]{ioant}
E. Viada, \emph{The intersection of a curve with a union of translated
  codimension-two subgroups in a power of an elliptic curve}, Algebra Number
  Theory {2} (2008), no.~3, 249--298. 
  
  
 

 \bibitem[Via15]{via15}
E. Viada, \emph{Explicit height bounds and the effective Mordell-Lang Conjecture}, to appear in the Proceedings of the ``Third Italian Number Theory Meeting" Pisa, 21--24 September 2015, on the Rivista di Matematica della Universit\`a di Parma. 


\bibitem[Zan12]{Zan12}
 U. Zannier, \emph{Some Problems of Unlikely Intersections in Arithmetic and Geometry (with Appendixes by D. Masser)}, Annals of Math. Studies, n. 181 (2012), Princeton Univ. Press.

\bibitem[Zha95]{Zhang95}
 S. Zhang, \emph{Positive line bundles on arithmetic varieties}, J. Amer. Math. Soc.  {8} (1995), no.~1, 159--165.


\end{thebibliography}
\end{document}